\newtheorem{theorem}{Theorem}[section]
\newtheorem{lemma}[theorem]{Lemma}
\newtheorem{cor}[theorem]{Corollary}
{\theoremstyle{definition}
\newtheorem{defin}[theorem]{Definition}

\newtheorem{rem}[theorem]{Remark}

{\theoremstyle{definition}
\newtheorem{exam}[theorem]{Example}
\newcommand{\mycomment}[1]{}

\usepackage{amsmath,amssymb, amsfonts}
\usepackage[margin=1.5in]{geometry}
\usepackage{mathrsfs}
\usepackage{enumerate}
\usepackage{graphicx}
\graphicspath{ {./images/} }
\usepackage{nicefrac}
\usepackage{setspace}
\usepackage{relsize}
\usepackage{hyperref}
\usepackage[numbers,sort&compress]{natbib}

\DeclareMathOperator{\interior}{int}

\DeclareMathOperator{\ess}{ess}

\title{Symbolic dynamics, shadowing and representation of real numbers with some countably piecewise linear Markov maps}
\author{Zoltán Kalocsai \thanks{Supported by the Eköp-24 University Excellence Scholarship Program of the Ministry for Culture and Innovation from the source of the National Research, Development and Innovation Fund. Supported by ELTE Eötvös Loránd University, Budapest, Hungary Faculty of Science}}
\date{ }

\begin{document}

\maketitle

\begin{abstract}
We study piecewise linear Markov maps, with countable Markov partitions, inspired by a problem of the Miklós Schweitzer competition of the János Bolyai Mathematical Society in 2022. We introduce $\ell$-Markov partitions and apply ideas of symbolic dynamics to our systems, relating them to Markov shifts. We prove the shadowing property for the system from the competition. We also investigate the possible orbits of rational numbers, for a class of piecewise linear Markov maps which generalize our original system. This has connections with symbolic dynamics, Diophantine approximations and Cantor series. We prove statements on the eventual periodicity of rationals and provide some example systems with different properties.
\end{abstract}

\tableofcontents

\renewcommand{\thefootnote}{}
\footnotetext{\textit{2020 Mathematics Subject Classification:} Primary: \textit{37B10} Secondary: \textit{11K55, 37A50, 37B65, 37E05.}} 
\footnotetext{\textit{Key words and phrases:} Symbolic dynamics, Markov partition, Shadowing, Cantor-series. }

\section{Introduction}

Piecewise linear interval maps are one of the simplest discrete time dynamical systems. Thanks to their simple definition they are widely used for modelling phenomena, and by their research we can learn techniques widely applicable for more complex systems.\medskip

In this work we consider dynamical properties of a class of real-to-real maps which have a countable Markov partition and are linear on the elements of the partition. These we call systems with an $\ell$-Markov partition. Our analysis of these maps is further motivated by a problem of the 2022. Miklós Schweitzer competition, proposed by my advisor Zoltán Buczolich. The Miklós Schweitzer competition is organized by the János Bolyai Mathematical Society. Its website can be found at \cite{schweitz}. We state the mentioned problem.\medskip

\textit{
The transformation $T:\: [ 0,\infty ) \rightarrow [0, \infty)$ is linear on every positive integer interval, and its values at integers are defined as follows:
\begin{equation}
T(n)=
	\begin{cases}
		0 & \text{if } 2|n\\
		4^{\ell}+1 & \text{if } 2 \nmid n,\, 4^{\ell - 1}\leq n < 4^{\ell} \: (\ell \in \mathbb{Z}^+). 
	\end{cases}
\end{equation}
Let $T^0(x)=x$ and $T^{n}(x) = T(T^{n-1}(x))$ for all $n>0$. Find the $\liminf_{k\rightarrow \infty}(T^k(x))$ and the $\limsup_{k\rightarrow \infty}(T^k(x))$ for Lebesgue almost every $x \in [0, \infty)$.\medskip
}

\begin{figure}[h]
\includegraphics[width= 10cm]{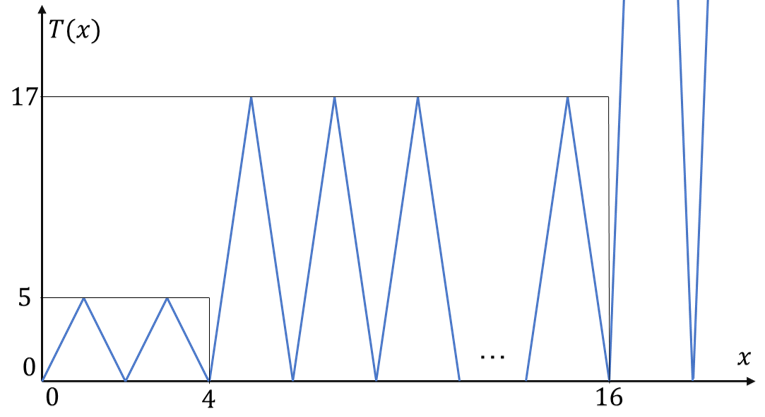}
\centering
\caption{A figure of the transformation $T$.}
\label{Fuggveny}
\end{figure}

Methods applicable for the solution of this problem will be discussed in another paper of the author in preparation \cite{mineOfficial}. In that work we discuss the possible invariant measures systems with $\ell$-Markov partitions can have, the asymptotic behaviour of pushforward measures and other ergodic properties. Here we only use the system presented.\medskip

In Section \ref{sec1} we first build up the relevant theory of symbolic dynamics in our special case. Then we introduce the notion of expansivity, which is a weaker form of the 'expanding condition' appearing widely in literature, for example in \cite{Bugiel1998}. We show the topological conjugacy of the studied systems with shifts if expansivity is satisfied.\medskip

In Section \ref{shadow} we introduce the pseudo-orbit shadowing property, based on the definitions from the literature. The phenomena presented in this section aim to model the use of imperfect numerical simulations for investigating the orbits of chaotic dynamical systems. We prove the shadowing property for the original system from the Schweitzer problem. The original system is in many ways similar to tent maps. Their shadowing property is studied in \cite{Tent}. Tent maps have a compact domain in contrast to our system which is non-compact. The classical shadowing theorem appearing in \cite{ShadowBook} as well investigates shadowing near a compact invariant set. It also assumes the mapping to be a diffeomorphism, which again does not apply to our system.\medskip

We continue in Section \ref{orbit of rationals} by investigating what symbolic trajectories can represent rational numbers. This section has connections with the rationality of a Cantor series. For a general review on when do Cantor series represent rational numbers see \cite{CantorIntro}. We consider piecewise linear Markov maps with countably many segments, where the intervals of the type $[n-1,n], n \in \mathbb{Z}$ form a Markov partition, and where the slope is at least 2 on any interval. We do not consider points which have an eventually integer orbit. We state some general results on the relationship of the set of eventually periodic points and the set of rationals. We achieve these results by employing methods of Diophantine approximation. We gain insight into how real numbers can be represented by symbolic orbits of a Markov shift over a countable alphabet.\medskip

\begin{defin}[Cantor series]
For some positive integer sequence $\{q_i\}_{i=0}^\infty$, and some integer sequence $0\leq  a_i \leq q_i - 1$ the following is called a Cantor series:
$$\sum_{n=0}^{\infty} \frac{a_n}{\prod_{k=1}^n q_k}.$$
\end{defin}

In Section \ref{theoretical_results} we show in Theorem \ref{bondolt} that in the systems considered, rationals with bounded orbits are eventually periodic. In Corollary \ref{Ergod}, we provide bounds on the speed of divergence to infinity of the average magnitude of rational orbits that are not eventually periodic. The main proposition of this section is Theorem \ref{last}, which gives a condition determining a class of systems for which all rational points are eventually periodic. The proof of the statement depends on the rationality of Cantor series. We describe the connections with \cite{Oppenheim}.\medskip

In Section \ref{example_systems}, some examples are given to demonstrate how the periodicity of rational points relates to the stochastic properties of the associated Markov chain. The main takeaway is that there is no direct link between the two. Example \ref{kek} shows the differences between representation by a symbolic trajectory and by a Cantor series, and thus justifies some further assumptions on the systems considered. Example \ref{zold} and Example \ref{extransi} are systems with a transient Markov chain for which Theorem \ref{last} applies, thus all rationals are eventually periodic. Example \ref{posrecEscapes} is a positively recurrent system with rational orbits that are not eventually periodic.\medskip

\section{Preliminary notation}

\begin{defin}
The function $\ell:\: \mathbb{R}_{\geq 0} \rightarrow \mathbb{N}$ is defined as follows:\[
\ell(x) = \inf\{n\in \mathbb{Z}^+:\: x \leq 4^n \}.
\]
\end{defin}

For a finite or infinite sequence $\underline{i} = (i_0, i_1, ...)$ we denote by $\underline{i}|n$ the finite sequence $(i_0, ..., i_n)$. For a Lebesgue-measurable subset $X$ of $\mathbb{R}^n$ we will denote the Lebesgue $\sigma$-algebra on $X$ with $\mathscr{L}(X)$. If a function $f$ is linear on the interior of an interval $H \subset \mathbb{R}$, then by $f'(H)$ we mean $f'(x)$ for any $x\in \interior (H)$. For some sequence of reals $x_n$ and for some integers $a>b$ we define $\sum_{n=a}^b x_n = 0$ and $\prod_{n=a}^b x_n = 1$ as it is standard for the empty sum and product. Throughout this paper by almost every we mean Lebesgue almost every, and by measurable we mean Lebesgue-measurable. We denote the Lebesgue measure by $\lambda$. By null set we mean sets of Lebesgue measure zero.

\section{Symbolic dynamics of piecewise linear Markov interval maps} \label{sec1}
In this section we define a class of dynamical systems which includes the original $(\mathbb{R}_{\geq 0}, T)$ system. We apply the well known methods of symbolic dynamics to these systems for use in further proofs. Later we apply the general theory to the $(\mathbb{R}_{\geq 0}, T)$ system.
\subsection{$\ell$ - Markov partitions}
The concept of Markov partitions is studied abundantly in dynamics literature. Places where it is defined include \cite{Bowen1979, Bugiel1985, Bugiel1998} and section 9.1 of \cite{Boyarsky}. We are particularly interested in maps which are piecewise linear, thus the definition we will give resembles class $\mathcal{L}_M$ appearing in section 9.1 of \cite{Boyarsky}.\medskip

Definitions in literature often, however not always assume the compactness of the domain or the finiteness of the partition. Both compactness and finiteness is assumed in Section 9.1 of \cite{Boyarsky} for the class $\mathcal{L}_M$, however in \cite{Bugiel1998} countable Markov partitions are defined on unbounded intervals. We will also use an unbounded, non-compact definition, however we will also assume piecewise linearity.

\begin{defin}[$\ell$-Markov partition] \label{Markovpart}
Given a measurable subset $X$ of the reals, which is a countable union of closed subintervals with non-empty interiors, and a measurable $f:\: X\rightarrow X$ the dynamical system $(X, f)$ has an $\ell$-Markov partition if there exists a countable set $\mathscr{H}$ of closed subintervals of $\mathbb{R}$ with non-empty interior such that the following are true:
\begin{enumerate}
\item For all $A, B \in \mathscr{H}$ if $A \neq B$ then $\interior (A) \cap \interior (B) = \emptyset$.
\item $X = \cup \mathscr{H}$.
\item For all $A \in \mathscr{H}$ there exists $\emptyset \neq \mathscr{A} \subseteq \mathscr{H}$ such that $\interior(f(A)) = \interior (\cup \mathscr{A})$.
\item The function $f$ is linear on the interior of all $H \in \mathscr{H}$.
\end{enumerate}
\end{defin}
 
We introduce a vital definition for symbolic dynamics.

\begin{defin}
Let $(X,f)$ be a dynamical system with an $\ell$-Markov partition $\mathscr{H}$ as in Definition \ref{Markovpart}. Let the elements of $\mathscr{H}$ be $H_i$ where $i$ is a positive integer. Then for a sequence of positive integers of not necessarily finite length $(i_0, i_1, ...)$ define:\[
H_{i_0i_1...} = \bigcap_{n=0}^{\infty} f^{-n}(H_{i_n}).
\]
If the sequence is finite then we only take finite intersections.
\end{defin}

\begin{defin}
Let us define the set $D_0 = \{x \in X: \exists i \in \mathbb{Z}^+: x \in \partial H_i \}$. Let $D$ be the set $\bigcup_{k=0}^\infty f^{-k}(D_0)$. Let $\widetilde{A}$ be $A\setminus D$ for all $A \subseteq X$.
\end{defin}

\begin{cor}
The set $D$ is countable, thus $\lambda(A \Delta \widetilde{A})=0$ for all $A \in \mathscr{L}(X)$. On all $\widetilde{H_i}$ the function $f$ is a homeomorphism between $\widetilde{H_i}$ and $f(\widetilde{H_i})$.
\end{cor}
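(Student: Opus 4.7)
The plan breaks into three independent pieces, mirroring the three assertions in the corollary.

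First, I would establish the countability of $D$. Since $\mathscr{H}$ is countable and each $H_i$ is an interval with at most two boundary points, $D_0$ is a countable union of finite sets, hence countable. To bootstrap this to $D$, I would show that $f^{-1}$ of any single point meets $X$ in a countable set. Here the key is that property 3 of Definition \ref{Markovpart} forces $\interior(f(H))$ to be non-empty for every $H \in \mathscr{H}$ (it equals $\interior(\cup \mathscr{A})$ with $\mathscr{A}\neq\emptyset$ and every element having non-empty interior), so the linear map $f|_{\interior(H)}$ from property 4 cannot be constant — its slope is nonzero. Therefore $f^{-1}(y)\cap \interior(H)$ contains at most one point, and $f^{-1}(y)\cap \partial H$ at most two, so $f^{-1}(y)\cap X$ is countable. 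Applying this to each $y\in D_0$ gives $f^{-1}(D_0)$ countable, and induction yields $f^{-k}(D_0)$ countable for every $k$. Then $D=\bigcup_{k=0}^\infty f^{-k}(D_0)$ is a countable union of countable sets.

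Second, the measure statement is immediate: $A\Delta\widetilde{A}=A\cap D\subseteq D$, and a countable subset of $\mathbb{R}$ has Lebesgue measure zero, so $\lambda(A\Delta\widetilde{A})=0$ for every $A\in\mathscr{L}(X)$.

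Third, I would verify the homeomorphism claim. Note that $\widetilde{H_i}=H_i\setminus D \subseteq H_i\setminus D_0 \subseteq \interior(H_i)$, because $\partial H_i\subseteq D_0\subseteq D$. On $\interior(H_i)$ the map $f$ is affine with nonzero slope (by the argument above), hence continuous and injective, with a continuous affine inverse on its image. Restricting to the subset $\widetilde{H_i}$ preserves continuity and injectivity, and the inverse $f^{-1}\colon f(\widetilde{H_i})\to \widetilde{H_i}$ is just the restriction of the continuous affine inverse, so it is continuous. Thus $f$ is a homeomorphism between $\widetilde{H_i}$ and $f(\widetilde{H_i})$.

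The only point that might require care is the nonzero-slope argument, since it is the bridge between the Markov structure (property 3) and the injectivity/countable-fiber conclusions; once that observation is in hand, the rest of the corollary reduces to straightforward set-theoretic bookkeeping.
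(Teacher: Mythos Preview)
Your proof is correct. The paper states this corollary without any proof, treating it as an immediate consequence of the definitions; your argument carefully fills in exactly the details the paper leaves implicit, including the key observation that property~3 of Definition~\ref{Markovpart} forces the slope of $f$ on each $\interior(H_i)$ to be nonzero.
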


We state some simple statements to show the structure of the defined sets.

\begin{lemma} \label{Rendszer2}
Given a finite sequence of positive integers $(i_0,...,i_n)$ the following are true for $H_{i_0...i_n}$:
\begin{enumerate}
\item For all $H_i \in \mathscr{H}$ we have that $f(\interior (H_i)) = \interior (f(H_i))$.

\item $\interior (H_{i_0...i_n})$ is an interval.

\item If $\interior (H_i) \cap f^{-1}(H_j) \neq \emptyset$ then $\interior(H_j) \subseteq \interior (f(H_i))$.

\item If $H_{i_0...i_n}$ has a non-empty interior then $f^k(\interior (H_{i_0...i_n})) = \interior (H_{i_k...i_n})$ for all integers $1 \leq k\leq n$.

\item For all $(j_0...j_n) \neq (i_0...i_n)$ sequences of positive integers $\interior (H_{i_0...i_n}) \cap \interior (H_{j_0...j_n}) = \emptyset$.

\item $\interior (H_{i}) = \interior ( \bigcup \{ H_{ik}: \interior (H_k) \subseteq f(H_{i}) \})$.

\item $\interior (H_{i_0...i_n}) = \interior ( \bigcup \{ H_{i_0...i_nk}: \interior (H_k) \subseteq f(H_{i_n}) \})$.

\item For all $k\in \mathbb{Z}^+$ the intersection $\interior (H_{i_0...i_nk}) \cap  \interior (H_{i_0...i_n})$ is non-empty if and only if 
$k \in \{m:\interior(H_m) \subseteq f(H_{i_n}) \}$.

\item If $( \interior (H_{i_0...i_nk}) \cap \interior (H_{i_0...i_n}) ) \neq \emptyset$ then $\lambda(H_{i_0...i_nk}) = \frac{\lambda(H_{k})}{|f'(H_{i_n})|} \lambda(H_{i_0...i_n})$.
\end{enumerate}
\end{lemma}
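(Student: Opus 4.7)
The plan is to establish the nine assertions in order, since each tends to lean on earlier ones. Items 1--5 are structural consequences of piecewise linearity together with the axioms of Definition \ref{Markovpart}; items 6--8 encode the Markov property along cylinder sets; and item 9 is a short chain-rule computation. Induction on $n$ (or on the length of the cylinder) is the natural engine throughout.

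For item 1, the restriction $f|_{\interior(H_i)}$ is affine and therefore a homeomorphism onto its image, so it sends the open interval $\interior(H_i)$ onto the open interval $\interior(f(H_i))$. For item 2, I would write $H_{i_0\dots i_n} = H_{i_0}\cap f^{-1}(H_{i_1\dots i_n})$, and using linearity of $f$ on $\interior(H_{i_0})$ together with the inductive hypothesis that $\interior(H_{i_1\dots i_n})$ is an interval, the preimage under an affine homeomorphism of an interval is again an interval. For item 3, any $x\in\interior(H_i)$ with $f(x)\in H_j$ satisfies $f(x)\in H_j\cap\interior(f(H_i)) = H_j\cap\interior(\bigcup\mathscr{A})$ for the set $\mathscr{A}\subseteq\mathscr{H}$ furnished by axiom 3 applied to $H_i$; disjointness of interiors of distinct partition elements (axiom 1) then forces $H_j\in\mathscr{A}$, giving $\interior(H_j)\subseteq\interior(f(H_i))$.

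Item 4 is then a direct induction on $k$ using item 1 and the defining intersection. Item 5 follows by taking the first coordinate $k$ at which two sequences differ and pushing the respective interiors forward by $f^k$: by item 4 they land in $\interior(H_{i_k})$ and $\interior(H_{j_k})$ respectively, which are disjoint by axiom 1. For items 6--8, I would combine item 3 with axiom 3 to see that $\interior(f(H_{i_n}))$ is precisely the union of the interiors of those $H_k$ permitted by the Markov axiom; pulling this decomposition back through the homeomorphism $f^n|_{\interior(H_{i_0\dots i_n})}$ (item 4) yields the asserted partition of $\interior(H_{i_0\dots i_n})$ into the relevant $\interior(H_{i_0\dots i_n k})$. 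This takes care of items 6 and 7, and item 8 is then immediate.

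Item 9 is the only genuinely computational part. Because $f$ is affine on each $\interior(H_{i_j})$, the composition $f^n$ is affine on $\interior(H_{i_0\dots i_n})$ with slope $s_n = \prod_{j=0}^{n-1} f'(H_{i_j})$, so by item 4, $|s_n|\cdot\lambda(H_{i_0\dots i_n}) = \lambda(H_{i_n})$; analogously $f^{n+1}$ is affine on $\interior(H_{i_0\dots i_n k})$ with slope $s_n\cdot f'(H_{i_n})$ and maps onto $\interior(H_k)$, giving $|s_n\cdot f'(H_{i_n})|\cdot\lambda(H_{i_0\dots i_n k}) = \lambda(H_k)$. Dividing the two relations eliminates $s_n$ and produces the claimed ratio (matching the stated formula in the normalization where the partition elements have a common length, as in the Schweitzer system). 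The principal obstacle I anticipate is almost entirely bookkeeping: one must consistently argue on open intervals rather than closed ones, since neighbouring partition pieces meet at endpoints and several of the set-equalities in items 6--8 only hold up to this null boundary.
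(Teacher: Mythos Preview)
Your proposal is correct and follows essentially the same route as the paper: the paper also proceeds item by item, using the affine homeomorphism on $\interior(H_{i_0})$ for the induction in items 2 and 4, the Markov axiom together with disjointness of interiors for item 3, forward images under $f^k$ for item 5, and the pull-back of the Markov decomposition for items 6--7, while declaring items 8--9 to follow ``straightforward from the definitions and previous statements.'' Your explicit chain-rule computation for item 9 is actually more careful than the paper's own handling, and your remark about the missing $\lambda(H_{i_n})$ factor (so that the printed formula is only literally correct when the partition pieces have unit length, as in the Schweitzer system and the integer-interval applications of Section~\ref{orbit of rationals}) is well observed.
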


\begin{proof}
It is worth noting that $H_{i_0...i_n}$ is the set of points for which for all $0\leq k \leq n$ integers $f^k(x) \in H_{i_k}$.
\begin{enumerate}
\item We have that $f$ is a homeomorphism on $\interior (H_i)$ and that $f(H_i) = f(\interior (H_i)) \cup f(\partial H_i)$. Since $f(\partial H_i)$ consists of at most two points we have that $\interior (f(\interior (H_i)) \cup f(\partial H_i)) = \interior (f(\interior (H_i))) = f(\interior (H_i))$. 

\item We have that $\interior (H_{i_0...i_n}) = \interior (H_{i_0}) \cap \interior (f^{-1}(H_{i_1...i_n}))$. On $\interior (H_{i_0})$ the function $f$ is a homeomorphism, so we also have that $\interior (H_{i_0}) \cap \interior (f^{-1}(H_{i_1...i_n})) =  \interior (H_{i_0}) \cap f^{-1}(\interior (H_{i_1...i_n}) )$, and that the preimage of an interval is an interval, thus if $\interior (H_{i_1...i_n})$ is an interval, then $\interior (H_{i_0...i_n})$ is also an interval. Since $\interior (H_i)$ is always an interval, this proves the statement by induction for all cases.

\item We have that $\interior (H_i) \cap f^{-1}(H_j) \neq \emptyset$ if and only if $f(\interior (H_i) \cap f^{-1}(H_j)) \neq \emptyset$. Since $f$ is a homeomorphism on $\interior (H_i)$ this is equivalent to $\interior (f(H_i)) \cap H_j \neq \emptyset$. From Definition \ref{Markovpart} we have that there exists $\emptyset \neq \mathscr{A} \subseteq \mathscr{H}$, such that $\interior (f(H_i)) = \interior (\cup \mathscr{A})$. It follows, that there exists $H_k \in \mathscr{A}$ such that $\interior (H_k) \cap H_j \neq \emptyset$. From Definition \ref{Markovpart} this implies that $H_k = H_j$ and thus $\interior(H_j) \subseteq \interior (f(H_i))$.

\item It suffices to show that if $\interior (H_{i_0...i_n})\neq \emptyset$ then $f(\interior (H_{i_0...i_n})) = \interior (H_{i_1...i_n})$. This is equivalent to $f(\interior (H_{i_0}) \cap \interior (f^{-1}(H_{i_1...i_n}))) = \interior (H_{i_1...i_n})$. Since $f$ is a homeomorphism on $\interior (H_{i_0})$ we have that this is equivalent to $\interior (f(H_{i_0})) \cap \interior (H_{i_1...i_n}) = \interior (H_{i_1...i_n})$. We have that $\interior (H_{i_0...i_n}) \neq \emptyset$, thus $\interior (H_{i_0}) \cap f^{-1}(H_{i_1})\neq \emptyset$. This implies that $\interior (H_{i_1}) \subseteq \interior (f(H_{i_0}))$. This proves the statement.

\item For some $k$ we have that $i_k \neq j_k$. In this case $\interior (H_{i_k}) \cap \interior (H_{j_k}) = \emptyset$. We have that $f^{k}(\interior (H_{i_0...i_n})) \cap f^{k}(\interior (H_{j_0...j_n})) = \interior (H_{i_k...i_n}) \cap \interior (H_{j_k...j_n}) = \emptyset$. This proves the statement.

\item Let $\mathscr{A} = \{ H_{k}: \interior (H_k) \subseteq f(H_{i}) \}$. From Definition \ref{Markovpart} we have that $\interior (f(H_i)) = \interior (\cup \mathscr{A})$. Let us take the preimage of both sides. Now we have that $\interior (H_i) \subseteq f^{-1}(f(\interior (H_i))) = f^{-1}(\interior (f(H_i))) = f^{-1}(\interior (\cup \mathscr{A}))$. In summary we have that $\interior (H_i) \subseteq f^{-1}(\interior (\cup \mathscr{A}))$. The left hand side of this is an open set, so by taking the interior of the right hand side, the inclusion is not violated, so $\interior (H_i) \subseteq \interior (f^{-1}(\interior (\cup \mathscr{A})))$. This implies that $\interior (H_i) \subseteq \interior (f^{-1}( \cup \mathscr{A}))$. Let us denote $\{ H_{ik}: \interior (H_k) \subseteq f(H_{i}) \}$ as $\{H_{ik}\}$. We have that $\interior (\cup \{H_{ik}\}) = \interior (H_{i}) \cap \interior (f^{-1}(\cup \mathscr{A})) = \interior (H_i)$.

\item We have that $\interior (H_{i_0...i_n}) = \interior (H_{i_0...i_{n-1}}) \cap \interior (f^{-n}(H_{i_n}))$. Since $f^{n}$ is a homeomorphism on $\interior (H_{i_0...i_{n-1}})$  thus we can write that $\interior (H_{i_0...i_n}) = \interior (H_{i_0...i_{n-1}}) \cap f^{-n}(\interior (H_{i_n}))$. Since $\interior (H_{i_n}) = \interior (\cup \{H_{i_nk}:\interior (H_k) \subseteq f(H_{i_n})\})$ the statement follows easily.
\end{enumerate}
The rest follows straightforward from the definitions and previous statements of this lemma.
\end{proof}

\subsection{Expansivity}
We need some stronger assumptions for certain statements, than the existence of $\ell$-Markov partitions. We introduce expansive $\ell$-Markov partitions. On a finite partition with more than one element on a bounded domain this is a property implied by $\ess\inf_{x \in X} |f'(x)| > 1$. In the literature one can find numerous definitions of 'expanding' mappings such as in \cite{Bowen1979}, condition 1.H1 in \cite{Bugiel1985} and \cite{Bugiel1991},  condition 4.2.H8 in \cite{Bugiel1998} and among the conditions of Theorem 9.4.2 in \cite{Boyarsky}. This condition is often used for proving the existence of certain kinds of absolutely continuous invariant measures. Depending on the exact theorem stated the specific condition changes from author to author. We also introduce a slightly weaker property for the same purposes. This property will allow us to prove stronger properties of the symbolic representation of the dynamics as well.\medskip

For $A,B \subseteq X$ we use the notation $A \Delta B = (A \setminus B) \cup (B \setminus A)$ for the symmetric difference.
\begin{defin} \label{expansdef}
The dynamical system $(X,f)$ is said to have an expansive $\ell$-Markov partition, if for all $\delta > 0$ and $I \subseteq X$ bounded interval there is a finite set of positive integer sequences of finite length $(i^{(1)}_0,..., i^{(1)}_{n_1}), ... , (i^{(N)}_0,..., i^{(N)}_{n_N})$ for which: \[
\lambda \Big( I \Delta \bigcup_{k=1}^N H_{i^{(k)}_0...i^{(k)}_{n_k}} \Big) < \delta.
\]
\end{defin}

We remark that if the measure of the sets of the partition is bounded and there is some $\varepsilon > 0$ that $|f'(H_i)|>1+\varepsilon$ for all $H_i \in \mathscr{H}$, then $\mathscr{H}$ is expansive.
 
\begin{defin}
Let the symbolic topology $\tau_\sigma$ on $\widetilde{X}$ be the one generated by the empty set, and the elements of $\{\widetilde{H}_{i_0...i_n}: n \in \mathbb{Z}^+, (i_0,...,i_n)\in (\mathbb{Z}^+)^n\}$.
\end{defin}

\begin{theorem} \label{topologia}
If the $\ell$-Markov partition is expansive, then the symbolic topology is the standard subspace topology $\tau$ on $\widetilde{X}$ inherited from $\mathbb{R}$.
\end{theorem}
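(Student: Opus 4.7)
The plan is to verify the two inclusions $\tau_\sigma \subseteq \tau$ and $\tau \subseteq \tau_\sigma$ separately, with expansivity needed only for the second. For $\tau_\sigma \subseteq \tau$, I would show that every generator admits the cleaner description $\widetilde{H}_{i_0 \ldots i_n} = \interior(H_{i_0 \ldots i_n}) \cap \widetilde{X}$, which is open in the subspace topology because $\interior(H_{i_0 \ldots i_n})$ is an interval in $\mathbb{R}$ by Lemma~\ref{Rendszer2}(2). The nontrivial direction is that $x \in H_{i_0 \ldots i_n}\setminus D$ implies $x \in \interior(H_{i_0 \ldots i_n})$: since $x \notin D$, each $f^k(x)$ lies in $\interior(H_{i_k})$, on which $f$ is linear and therefore continuous, so $f^k$ is continuous at $x$ as a composition; pulling back the open product $\interior(H_{i_0}) \times \cdots \times \interior(H_{i_n})$ through $y \mapsto (y, f(y), \ldots, f^n(y))$ yields an open neighbourhood of $x$ contained in $H_{i_0 \ldots i_n}$.

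For $\tau \subseteq \tau_\sigma$, every open subset of $\widetilde{X}$ in the subspace topology is a union of sets $I \cap \widetilde{X}$ with $I$ a bounded open interval of $\mathbb{R}$, so it suffices to exhibit, for each $x \in I \cap \widetilde{X}$, a cylinder with $x \in \widetilde{H}_{i_0 \ldots i_n} \subseteq I$. The point $x$ has a unique symbol sequence $(i_0, i_1, \ldots)$ since its iterates lie in unique partition interiors, and by the first part $x \in \interior(H_{i_0 \ldots i_n})$ for every $n$. The closed cylinders $H_{i_0 \ldots i_n}$ thus form a nested decreasing family of closed intervals whose intersection $C$ contains $x$; once I know $\lambda(C) = 0$, so that $C = \{x\}$ and $\operatorname{diam}(H_{i_0 \ldots i_n}) \to 0$, any sufficiently deep cylinder satisfies $H_{i_0 \ldots i_n} \subseteq I$ and the proof is complete.

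The main obstacle is precisely the claim that $\lambda(C) = 0$, and this is where expansivity enters. Suppose for contradiction $\lambda(C) > 0$, pick a bounded open sub-interval $I_0 \subsetneq \interior(C)$ with $0 < \lambda(I_0) < \lambda(C)$, and apply Definition~\ref{expansdef} to $I_0$ with $\delta < \min\{\lambda(I_0),\, \lambda(C) - \lambda(I_0)\}$ to obtain a finite union $U$ of cylinders with $\lambda(I_0 \Delta U) < \delta$. The key observation is that any cylinder $H_{j_0 \ldots j_m}$ appearing in $U$ whose interior meets $\interior(C)$ must already coincide with $H_{i_0 \ldots i_m}$: choosing $y$ in the open intersection but outside the countable set $D$, the uniqueness of symbol sequences on $\widetilde{X}$ identifies $(i_0, i_1, \ldots)$ as the symbols of $y$, and $y \in H_{j_0 \ldots j_m}\setminus D$ forces $f^k(y) \in \interior(H_{j_k}) \cap \interior(H_{i_k})$, so $j_k = i_k$ by Definition~\ref{Markovpart}(1). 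Any such cylinder contains $C$, so if $U$ includes one then $\lambda(U \setminus I_0) \geq \lambda(C) - \lambda(I_0) > \delta$, while if no cylinder of $U$ meets $\interior(C)$ then $\lambda(I_0 \setminus U) \geq \lambda(I_0) > \delta$. Both contradict the approximation bound, forcing $\lambda(C) = 0$ as required.
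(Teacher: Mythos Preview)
Your proof is correct and follows essentially the same approach as the paper: the inclusion $\tau_\sigma \subseteq \tau$ via the identity $\widetilde{H}_{i_0\ldots i_n} = \interior(H_{i_0\ldots i_n}) \cap \widetilde{X}$, and the reverse inclusion by arguing that cylinders have an all-or-nothing relationship with a proper subinterval of the ``bad'' set, contradicting expansivity. The paper packages the second step as a separate lemma (any $\widetilde{H}_{i_0i_1\ldots}$ is empty or a singleton) and phrases the contradiction in terms of $\widetilde{(x,y)}$ rather than $\lambda(C)$, but the underlying argument is the same.
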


\begin{proof}
All $\interior (H_{i_0...i_n})$ are open intervals and we have that $\widetilde{H}_{i_0...i_n} = \interior (H_{i_0...i_n}) \cap \widetilde{X}$. This implies that $\widetilde{H}_{i_0...i_n} \in \tau$, thus $\tau_\sigma \subseteq \tau$. We also prove that $\tau \subseteq \tau_\sigma$.

\begin{lemma} \label{szepar}
If the $\ell$-Markov partition is expansive, then for any infinite positive integer sequence $i_0, i_1, ...$ the set $\widetilde{H}_{i_0...}$ is either empty or has exactly one element.
\end{lemma}

\begin{proof}
Proceeding towards a contradiction let us suppose that $x\neq y$ and $x,y \in \widetilde{H}_{i_0...}$. Now $\widetilde{(x, y)} \subseteq \widetilde{H}_{i_0...}$. This implies that for all finite positive integer sequences $(j_0, ..., j_n)$ we have, that either $\widetilde{(x,y)} \subseteq \widetilde{H}_{j_0...j_n}$ or $\widetilde{(x,y)} \cap \widetilde{H}_{j_0...j_n} = \emptyset$. This contradicts that $\mathscr{H}$ is an expansive Markov partition, since the condition described in Definition \ref{expansdef} fails for any proper subinterval of $(x,y)$.
\end{proof} 

Let us consider a $\emptyset \neq G \in \tau$, such that for some $a, b \in \mathbb{R}$ we have that $\widetilde{(a,b)} = G$. Now for any $x \in G$ we have $c,d \in G$ such that $c<x<d$. Since $\widetilde{H}_{x,f(x),f^2(x)...}$ can only have one element, there exists some $k \in \mathbb{N}$ such that $\widetilde{H}_{x...f^k(x)} \cap \{c,d\} = \emptyset$ thus $\widetilde{H}_{x...f^k(x)} \subset G$. This implies that $G \in \tau_\sigma$ thus $\tau \subseteq \tau_\sigma$. 
\end{proof}

\begin{theorem} \label{expans2}
The $\ell$-Markov partition $\mathscr{H}$ of the dynamical system $(X,f)$ is expansive if and only if the following criterion holds. For all $\{ i_n \}_{n=0}^\infty \in (\mathbb{Z}_{> 0} )^\mathbb{N}$ such that $\tilde{H}_{i_0 i_1 ...} \neq \emptyset$ we have the following:
\begin{equation} \label{shrinking}
0 = \lim_{n\to \infty} \lambda (H_{i_0 i_1 ... i_n})
\end{equation}
\end{theorem}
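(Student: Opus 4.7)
The proof naturally splits into two implications, both of which will rely heavily on Lemma~\ref{Rendszer2}, in particular that each $H_{i_0\ldots i_n}$ is a closed interval and that cylinders obey a tree structure (two cylinders are either nested or have disjoint interiors).

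For the direction ($\Rightarrow$), assume expansivity and that $\tilde{H}_{i_0 i_1\ldots}\neq\emptyset$, and suppose toward contradiction that $\lambda(H_{i_0\ldots i_n})\not\to 0$. The cylinders are nested, so this sequence is nonincreasing and hence bounded below by some $\delta>0$. Writing $\interior(H_{i_0\ldots i_n})=(a_n,b_n)$ with $a_n$ nondecreasing, $b_n$ nonincreasing, and $b_n-a_n\geq\delta$, the open interval $(\lim a_n,\lim b_n)$ has length $\geq\delta$ and lies in every $H_{i_0\ldots i_n}$, hence in $H_{i_0 i_1\ldots}$. Removing the countable set $D$ still leaves uncountably many points, so $\tilde{H}_{i_0 i_1\ldots}$ contains at least two elements, contradicting Lemma~\ref{szepar}.

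For the direction ($\Leftarrow$), fix $\delta>0$ and a bounded interval $I\subseteq X$. Any $x\in\widetilde{X}$ admits a unique label sequence $(x_n)$ with $f^n(x)\in\interior(H_{x_n})$ (no iterate of $x$ touches $D_0$), and then $x\in\tilde{H}_{x_0 x_1\ldots}$, so by hypothesis $\lambda(H_{x_0\ldots x_n})\to 0$; since these are intervals containing $x$, their diameters also vanish. For each $x\in\widetilde{X}\cap\interior(I)$ the distance from $x$ to $\mathbb{R}\setminus\interior(I)$ is positive, so there is a least $n(x)$ with $H_{x_0\ldots x_{n(x)}}\subseteq I$. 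Let $\mathscr{F}$ be the family of these cylinders. The key point is that distinct elements of $\mathscr{F}$ have disjoint interiors: by the tree structure, a shared interior point would force one label sequence to be a strict prefix of the other, and then the shallower cylinder already lies in $I$, contradicting the minimality of $n(x)$ for the deeper one. Since $\mathscr{F}$ covers $\widetilde{X}\cap\interior(I)$---all of $I$ outside a countable null set---and each element of $\mathscr{F}$ is contained in $I$, we have $\sum_{F\in\mathscr{F}}\lambda(F)=\lambda(I)$. Choosing a finite subcollection $\mathscr{F}'\subseteq\mathscr{F}$ with $\sum_{F\in\mathscr{F}'}\lambda(F)>\lambda(I)-\delta$ then yields $\lambda\bigl(I\Delta\bigcup\mathscr{F}'\bigr)<\delta$.

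The main obstacle is the almost-disjointness and full-measure covering property of the family $\mathscr{F}$ in the backward direction; once this is set up correctly, both implications reduce to essentially nested-interval arguments packaged with Lemma~\ref{Rendszer2} and Lemma~\ref{szepar}.
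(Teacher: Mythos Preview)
Your proof is correct. The forward direction is essentially the paper's argument: both reduce to Lemma~\ref{szepar} by showing that if the cylinder measures do not shrink to zero then $\tilde H_{i_0i_1\ldots}$ contains more than one point. (The paper separates out the case where every $\interior(H_{i_0\ldots i_n})$ is unbounded before invoking continuity of measure, whereas your nested-interval formulation with $(a_n,b_n)$ handles that case silently; this is a cosmetic difference.)

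The backward direction, however, is genuinely different. The paper fixes two points $a,b\in\tilde X$ approximating the endpoints of $I$, then works at a \emph{single} depth $n_\delta$ chosen so that the level-$n_\delta$ cylinders containing $a$ and $b$ are small, and collects all level-$n_\delta$ cylinders lying between them. You instead run a stopping-time (Vitali-type) construction: for each $x\in\tilde X\cap\interior(I)$ you take the \emph{first} cylinder around $x$ that fits inside $I$, and then argue that the resulting family is interior-disjoint and covers $I$ up to a null set. Your approach avoids having to talk about ``between'' on the real line and makes the disjointness purely combinatorial (tree structure plus minimality of $n(x)$); the paper's approach is more explicit about the geometry and produces cylinders all at one level, which can be convenient if one later wants uniform control on the depth. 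Both routes end the same way, by extracting a finite subfamily from a countable full-measure cover.
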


\begin{proof}
We first prove that expansivity implies \eqref{shrinking}. By Lemma \ref{Rendszer2} we have the following two equalities: 
$$H_{i_0 i_1 ... i_n} = \bigcap_{j=0}^n H_{i_0 ... i_j}, \;\; H_{i_0 i_1 ... } = \bigcap_{j=0}^\infty H_{i_0 ... i_j}.$$
This is a shrinking sequence of sets, thus it is also true that:
$$\interior(H_{i_0 i_1 ... i_n}) = \bigcap_{j=0}^n \interior(H_{i_0 ... i_j}).$$
If for no $n$ is $\interior(H_{i_0 i_1 ... i_n})$ bounded, then  $\overline{ \bigcap_{j=0}^\infty \tilde{H}_{i_0 ... i_j} } = \overline{ \tilde{H}_{i_0i_1...}}$ is either an unbounded interval, or the empty set. From Lemma \ref{szepar} if expansivity applies to $\mathscr{H}$, then it follows that $\overline{\tilde{H}_{i_0i_1...}}$ must be empty. However we have assumed in the statement of the theorem, that $\tilde{H}_{i_0 i_1 ...} \neq \emptyset$. Thus we may assume that from a large enough $n$ we have that $\lambda (H_{i_0 i_1 ... i_n}) < \infty$, thus from the continuity of the Lebesgue measure we have that: $$\lim_{n\to \infty} \lambda (H_{i_0 i_1 ... i_n}) = \lambda (H_{i_0i_1 ...}) = \lambda(\interior (H_{i_0i_1 ...})).$$
By Lemma \ref{szepar} we have that expansivity implies \eqref{shrinking}.\medskip

For the other implication we are given any $\delta$ and a bounded interval $I$. Since the set $\tilde{X}$ is dense in $X$ we can find $a, b \in \tilde{X}$ such that $\lambda(I \Delta (a,b)) < \nicefrac{\delta}{3}$. We state a lemma. 

\begin{lemma} \label{iti}
For all $x \in \tilde{X}$ there exists a positive integer sequence $i_0 i_1 ...$ such that $x \in \tilde{H}_{i_0 i_1 ...}$
\end{lemma}
\begin{proof}
We have that $x \in H_{i_0}$ for some $i_0$, and if $x \in H_{i_0 ... i_n}$ we can pick $i_{n+1}$ such that $x \in H_{i_0 ... i_{n+1}}$. Thus we can pick a positive integer sequence $i_0 i_1 ...$ such that $x \in H_{i_0 i_1 ...}$. Since $x \in \tilde{X}$ we also have that $x \in \tilde{H}_{i_0 i_1 ...}$
\end{proof}
Let us choose $n_\delta$ such that there exists $(i_0, i_1, ... i_{n_\delta}), (j_0, j_1, ... j_{n_\delta}) \in \mathbb{Z}_{> 0}^{n_\delta}$ such that $a \in \tilde{H}_{i_0 ... i_{n_\delta}}, b \in \tilde{H}_{j_0 ... j_{n_\delta}}$ and $\lambda(\tilde{H}_{i_0 ... i_{n_\delta}}) + \lambda(\tilde{H}_{j_0 ... j_{n_\delta}}) < \nicefrac{\delta}{3}$. By Lemma \ref{iti} and by \eqref{shrinking} there always exist such sequences. For our cover let us pick sets indexed by $n_\delta$ long integer sequences. Let  $\mathscr{A}$ be the set of all integer sequences $(a_0, ..., a_{n_{\delta}})$, for which $H_{a_0...a_{n_{\delta}}} \neq \emptyset$ and $\interior(H_{a_0...a_{n_\delta}})$ lies between $\interior (H_{i_0 ... i_{n_\delta}})$ and $\interior (H_{j_0 ... j_{n_\delta}})$. This is a countable set. Let us index it such that $\mathscr{A} = \{(a^{(i)}_0, ..., a^{(i)}_{n_\delta})\}_{i \in \mathbb{N}}$. We have the following:
$$\lambda \Big( I \Delta \bigcup_{i=0}^\infty H_{a^{(i)}_0 .. a^{(i)}_{n_\delta}} \Big)\leq \frac{2}{3} \delta.$$
Thus for a large enough $N$ we have:
$$\lambda \Big( I \Delta \bigcup_{i=0}^N H_{a^{(i)}_0 .. a^{(i)}_{n_\delta}} \Big)\leq  \delta.$$
\end{proof}

For all the remaining statements in this subsection we assume that we have an expansive $\ell$-Markov partition.

\begin{defin}
Let the function $\underline{i}: \widetilde{X} \rightarrow \mathbb{Z}^\mathbb{N}$ be the itinerary of a point. For a point $x\in \widetilde{X}$ it gives the integer sequence $(i_0, ...)$ for which $x \in \widetilde{H}_{i_0...}$. 
\end{defin}

Since we are in $\widetilde{X}$ it is straightforward to see from Lemma \ref{Rendszer2} and Theorem \ref{topologia} that $\underline{i}$ is a well-defined bijection between $\widetilde{X}$ and $\underline{i}(\widetilde{X})$.

\begin{defin} \label{trajspace}
We denote by $\Omega_f = \{\underline{i}(x):\: x \in \widetilde{X}\}$ the space of allowed symbolic trajectories. We define the metric $d: \Omega_f \times \Omega_f \rightarrow \widetilde{X}$ such, that $d((i_0,...), (j_0,...)) = 2^{-n}$ where $n$ is the smallest non-negative integer for which $i_n \neq j_n$.
\end{defin}

\begin{theorem} \label{Homeo}
The function $\underline{i}$ is a homeomorphism between $\widetilde{X}$ and $\underline{i}(\widetilde{X}) = \Omega_f$.
\end{theorem}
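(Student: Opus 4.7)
The plan is to leverage Theorem \ref{topologia}, which already identifies the subspace topology $\tau$ on $\widetilde{X}$ with the symbolic topology $\tau_\sigma$. Since the discussion right before the statement notes that $\underline{i}$ is a well-defined bijection, the remaining task is purely topological: compare the metric topology on $\Omega_f$ induced by $d$ with $\tau_\sigma$ transported through $\underline{i}$.

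First I would unpack the $d$-topology. For a point $\underline{j} = (j_0, j_1, \ldots) \in \Omega_f$ and integer $n \geq 0$, the open ball
\[
B(\underline{j}, 2^{-n}) = \{\underline{k} \in \Omega_f : k_m = j_m \text{ for } m=0,\ldots,n\}
\]
is the cylinder determined by the finite prefix $(j_0,\ldots,j_n)$. These cylinders form a basis for the $d$-topology. Under $\underline{i}^{-1}$ this cylinder is sent exactly to $\widetilde{H}_{j_0\ldots j_n}$, since a point $x \in \widetilde{X}$ has itinerary beginning with $(j_0,\ldots,j_n)$ if and only if $x \in \widetilde{H}_{j_0\ldots j_n}$. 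By Theorem \ref{topologia}, $\widetilde{H}_{j_0\ldots j_n}$ is open in $\tau$, so $\underline{i}$ is continuous.

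For the converse direction I would take a basic open set of $\tau_\sigma$, which by Theorem \ref{topologia} (and the definition of $\tau_\sigma$) may be taken to be of the form $\widetilde{H}_{i_0\ldots i_n}$. Its image under $\underline{i}$ is the cylinder $\{\underline{k} \in \Omega_f : k_m = i_m, \, m=0,\ldots,n\}$; for every $\underline{j}$ in this cylinder, the ball $B(\underline{j}, 2^{-n})$ is entirely contained in it, so the image is open in the $d$-topology. Hence $\underline{i}^{-1}$ is continuous, and $\underline{i}$ is a homeomorphism.

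I do not anticipate a genuine obstacle here: the key analytical content — that cylinder sets $\widetilde{H}_{i_0\ldots i_n}$ shrink to points and generate the subspace topology — has already been established in Lemma \ref{szepar} and Theorem \ref{topologia} under the expansivity hypothesis. The only care needed is to confirm that the basic open sets of $\tau$ on $\widetilde{X}$ can indeed be written as unions of $\widetilde{H}_{i_0\ldots i_n}$, but this is exactly what Theorem \ref{topologia} delivers, so the argument is essentially a one-to-one translation between cylinders on the symbolic side and $\widetilde{H}_{i_0\ldots i_n}$ on the interval side.
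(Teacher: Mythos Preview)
Your proposal is correct and follows essentially the same route as the paper: both arguments identify the open $d$-balls in $\Omega_f$ with cylinder sets and match them via $\underline{i}$ to the sets $\widetilde{H}_{i_0\ldots i_n}$, invoking Theorem \ref{topologia} to connect these to the subspace topology on $\widetilde{X}$. The paper phrases this as an $\varepsilon$--$\delta$ argument while you work at the level of basic open sets, but the content is the same.
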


\begin{proof}
Let $\underline{i}(x) = (i_0,...)$ for some $x \in \widetilde{X}$. Let us start by proving the continuity of $\underline{i}$. Then $x \in \interior (H_{i_0...i_n})$ for all $n \in \mathbb{N}$. From here given an $\varepsilon > 0$ and an $N \in \mathbb{N}$ for which $2^{-N} < \varepsilon$ let us pick a $\delta>0$ for which $U = B(x, \delta) \subseteq \interior (H_{i_0...i_N})$. This ensures that $\underline{i}(\widetilde{U}) \subseteq B(\underline{i}(x), \varepsilon)$.\medskip

For the continuity of the inverse we have that $\underline{i}(\widetilde{H}_{i_0...i_n}) = B(\underline{i}(x), 2^{-1-n})$. From Theorem \ref{topologia} we have that for all $\varepsilon > 0$ we have $N \in \mathbb{N}$ such that $\widetilde{H}_{i_0...i_N} \subset B(x, \varepsilon)$. It follows that for $0 < \delta < 2^{-1-N}$ we have that $\underline{i}^{-1}(B(\underline{i}(x), \delta)) \subseteq B(x, \varepsilon)$. 
\end{proof}

\begin{cor} \label{conju}
The dynamical systems $(\widetilde{X}, f)$ and $(\Omega_f, \sigma)$ are topologically conjugate.
\end{cor}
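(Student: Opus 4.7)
The plan is straightforward because Theorem \ref{Homeo} already supplies the hard part, namely that $\underline{i}: \widetilde{X} \to \Omega_f$ is a homeomorphism. To upgrade this to a topological conjugacy between $(\widetilde{X}, f)$ and $(\Omega_f, \sigma)$, I need only verify that (a) $f$ actually restricts to a self-map of $\widetilde{X}$, so that the left-hand dynamical system makes sense, (b) $\sigma$ preserves $\Omega_f$, and (c) the intertwining identity $\underline{i} \circ f = \sigma \circ \underline{i}$ holds pointwise on $\widetilde{X}$.

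For (a), I would note that $D = \bigcup_{k=0}^\infty f^{-k}(D_0)$ is, by construction, forward-invariant under $f$: if $x \in \widetilde{X} = X \setminus D$, then $f^k(x) \notin D_0$ for every $k \geq 0$, and in particular $f^{k}(f(x)) = f^{k+1}(x) \notin D_0$ for every $k \geq 0$, so $f(x) \in \widetilde{X}$. For (c), I would unpack the definition of the itinerary: if $\underline{i}(x) = (i_0, i_1, \ldots)$, this means $x \in \widetilde{H}_{i_0 i_1 \ldots} = \bigcap_{n=0}^\infty f^{-n}(H_{i_n}) \setminus D$. Applying $f$, we get $f^n(f(x)) = f^{n+1}(x) \in H_{i_{n+1}}$ for all $n \geq 0$, and since $f(x) \in \widetilde{X}$ by (a), this says exactly that $f(x) \in \widetilde{H}_{i_1 i_2 \ldots}$, so $\underline{i}(f(x)) = (i_1, i_2, \ldots) = \sigma(\underline{i}(x))$. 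Statement (b) then follows automatically: $\sigma(\Omega_f) = \sigma(\underline{i}(\widetilde{X})) = \underline{i}(f(\widetilde{X})) \subseteq \underline{i}(\widetilde{X}) = \Omega_f$.

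There is no genuine obstacle here; the content of the corollary is that the homeomorphism $\underline{i}$ of Theorem \ref{Homeo} intertwines $f$ with the shift, and this is immediate from the definition of the itinerary. The only thing to be slightly careful about is confirming that the forward orbit of a point in $\widetilde{X}$ stays in $\widetilde{X}$, which is guaranteed by the construction of $D$ as a union over all backward iterates of $D_0$.
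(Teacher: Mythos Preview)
Your proposal is correct and matches the paper's approach: the paper states this as an immediate corollary of Theorem \ref{Homeo} with no further proof, and you have simply spelled out the routine verifications (that $\widetilde{X}$ is $f$-invariant and that $\underline{i}$ intertwines $f$ with $\sigma$) that the paper leaves implicit.
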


\begin{theorem}
If for some $x \in \widetilde{X}$ the orbit of $\underline{i}(x)$ is periodic for the shift operator then the orbit of $x$ is periodic as well.
\end{theorem}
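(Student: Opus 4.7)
The plan is to deduce this directly from the topological conjugacy established in Corollary \ref{conju}. If $\underline{i}(x) = (i_0, i_1, \ldots)$ is periodic under $\sigma$ with period $p$, meaning $\sigma^p(\underline{i}(x)) = \underline{i}(x)$, then the conjugacy should transport this equation back to $\widetilde{X}$ and yield $f^p(x) = x$. So the proof is essentially a three-step verification.

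First I would check that $\widetilde{X}$ is forward-invariant under $f$, so that iterates $f^k(x)$ remain in the domain of $\underline{i}$. This is immediate from the definition $\widetilde{X} = X \setminus D$ with $D = \bigcup_{k \geq 0} f^{-k}(D_0)$: if $x \notin D$, then $f^m(x) \notin D_0$ for all $m \geq 0$, which in particular gives $f^{m+1}(x) \notin D_0$ for all $m \geq 0$, i.e., $f(x) \notin D$. Hence $f^k(x) \in \widetilde{X}$ for every $k \geq 0$ and $\underline{i}(f^k(x))$ is well defined.

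Second I would verify the intertwining $\underline{i} \circ f = \sigma \circ \underline{i}$ used in Corollary \ref{conju}. If $x \in \widetilde{H}_{i_0 i_1 \ldots}$, then $x \in f^{-n}(H_{i_n})$ for every $n$, so $f(x) \in f^{-n}(H_{i_{n+1}})$ for every $n$, and hence $f(x) \in \widetilde{H}_{i_1 i_2 \ldots}$. This gives $\underline{i}(f(x)) = \sigma(\underline{i}(x))$.

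Third I would apply these to conclude. By induction, $\underline{i}(f^p(x)) = \sigma^p(\underline{i}(x)) = \underline{i}(x)$. Since $\underline{i}$ is a bijection on $\widetilde{X}$ (Theorem \ref{Homeo}, or the remark following Definition \ref{trajspace}), it follows that $f^p(x) = x$, so the orbit of $x$ is periodic with period dividing $p$. There is no real obstacle here; the content of the statement is already contained in Corollary \ref{conju}, and the only thing worth spelling out is the forward invariance of $\widetilde{X}$, which ensures that the conjugacy can actually be iterated at the point $x$.
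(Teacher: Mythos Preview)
Your proof is correct and follows essentially the same approach as the paper: both arguments boil down to noting that $x$ and $f^{p}(x)$ have the same itinerary and then invoking injectivity of $\underline{i}$ (the paper phrases this as both points lying in the singleton $\widetilde{H}_{i_0\ldots i_n i_0\ldots i_n\ldots}$, while you cite Theorem~\ref{Homeo}). Your additional verification of the forward invariance of $\widetilde{X}$ and of the intertwining $\underline{i}\circ f=\sigma\circ\underline{i}$ is a nice bit of care that the paper leaves implicit.
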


\begin{proof}
If $\underline{i}(x)$ is $i_0...i_ni_0...i_n...$, then $x \in \widetilde{H}_{i_0...i_ni_0...i_n...}$ and $f^{n+1}(x) \in \widetilde{H}_{i_0...i_ni_0...i_n...}$. This means that $x = f^{n+1}(x)$, so the orbit of $x$ has period $n+1$.
\end{proof}

\subsection{Application to $(\mathbb{R}_{\geq 0}, T)$}

We apply the previously introduced and formalized methods to $(\mathbb{R}_{\geq 0}, T)$. We also show some additional structure, which is only observable in this specific case.

\begin{defin}
We define the closed integer intervals to be $I_n = [n-1, n]$ for any $n\in \mathbb{Z}^+$.
\end{defin}

\begin{defin} \label{symbolinterval}
Given a finite sequence of positive integers $(i_0,...,i_N)$ we denote $I_{i_0...i_N} = \bigcap_{n=0}^{N} T^{-n}(I_{i_n})$. Given an infinite sequence of positive integers $(i_0,...)$ we denote 
$I_{i_0...} = \bigcap_{n=0}^{\infty} T^{-n}(I_{i_n})$.
\end{defin}

\begin{lemma}\label{Rendszer}
Given a finite sequence of positive integers $(i_0,...i_n)$ the following are true for $I_{i_0...i_n}$:
\begin{enumerate}
\item The sets $I_{i_0...i_nk}$ where $k \in \mathbb{Z} \cap [1,4^{\ell(i_n)}+1]$ are placed in a row in increasing or decreasing order. (See Figure \ref{rendabra})

\item The above sets are in increasing order if and only if there is an even number of even numbers among $i_0 ... i_n$. (See Figure \ref{rendabra})
\end{enumerate}
\end{lemma}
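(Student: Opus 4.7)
The plan is to reduce everything to a careful bookkeeping of orientations of the linear pieces of $T$. First I would record the elementary fact that on each integer interval $I_n = [n-1,n]$ the map $T$ is linear and carries $I_n$ bijectively onto $[0, 4^{\ell(i_n)}+1]$; more precisely, $T$ is \emph{increasing} on $I_n$ when $n$ is odd (since then $T(n-1)=0$ and $T(n)=4^{\ell(n)}+1$) and \emph{decreasing} on $I_n$ when $n$ is even (since then $T(n-1) = 4^{\ell(n-1)}+1$ and $T(n)=0$). A small verification here is that, when $n$ is even, $\ell(n-1)=\ell(n)$, so that the image is really $[0,4^{\ell(n)}+1]$ in both cases; this follows because the jump in $\ell$ happens between $4^{k-1}$ and $4^{k-1}+1$, and $4^{k-1}$ is even for $k\geq 2$ while for $k=1$ one checks $\ell(1)=\ell(2)=1$ directly. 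This identifies the range of $T$ on $I_{i_n}$ as exactly the interval $[0,4^{\ell(i_n)}+1]$ that appears in the statement.

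Next I would apply item 4 of Lemma \ref{Rendszer2} (specialized to the system $(\mathbb{R}_{\geq 0},T)$), together with Definition \ref{symbolinterval}, to conclude that $T^{n+1}$ restricted to $\interior(I_{i_0\dots i_n})$ is a linear homeomorphism onto $\interior([0,4^{\ell(i_n)}+1]) = (0,4^{\ell(i_n)}+1)$, because it factors as the composition $T|_{I_{i_n}} \circ T|_{I_{i_{n-1}}} \circ \cdots \circ T|_{I_{i_0}}$ on nested images. The orientation of this composition is the product of the orientations of each factor, so $T^{n+1}|_{I_{i_0\dots i_n}}$ is increasing if and only if the number of decreasing factors among $T|_{I_{i_0}}, \dots, T|_{I_{i_n}}$ is even, which by the first paragraph is exactly when the number of even entries among $i_0, \dots, i_n$ is even.

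Finally, the sets $I_{i_0\dots i_n k}$ for $k\in\mathbb{Z}\cap[1, 4^{\ell(i_n)}+1]$ are, by construction, the preimages under $T^{n+1}|_{I_{i_0\dots i_n}}$ of the intervals $I_k$, which themselves sit in the target $[0,4^{\ell(i_n)}+1]$ in increasing order of $k$. Since $T^{n+1}|_{I_{i_0\dots i_n}}$ is a linear bijection onto its image, it preserves the left-to-right ordering of a partition iff it is increasing, and reverses it iff it is decreasing. Combining this with the orientation criterion of the previous paragraph yields both statements of the lemma simultaneously: the $I_{i_0\dots i_n k}$ are arranged in a row (because $T^{n+1}$ is a piecewise linear homeomorphism and the $I_k$ are in a row), and their order matches that of $k$ precisely when there is an even number of even entries in $i_0,\dots,i_n$.

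I do not expect a real obstacle, but the most error-prone step is the orientation bookkeeping: one has to be careful that the parity count runs over all of $i_0,\dots,i_n$ (including $i_n$), since the final factor $T|_{I_{i_n}}$ contributes to the orientation of $T^{n+1}$ as well. The only genuine case check is verifying the range formula $[0,4^{\ell(i_n)}+1]$ uniformly in the parity of $i_n$, which is the small $\ell(i_n)=\ell(i_n-1)$ computation mentioned above.
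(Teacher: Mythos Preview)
Your argument is correct. The paper itself gives no proof of this lemma, treating it as evident from the definition of $T$ and referring the reader to Figure~\ref{rendabra}; your orientation-tracking argument (the parity of $i_j$ determines the sign of $T'|_{I_{i_j}}$, so $T^{n+1}|_{I_{i_0\dots i_n}}$ is increasing exactly when the number of even entries is even) is precisely the natural way to spell out the details.
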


\begin{figure}[h]
\includegraphics[width= 10cm]{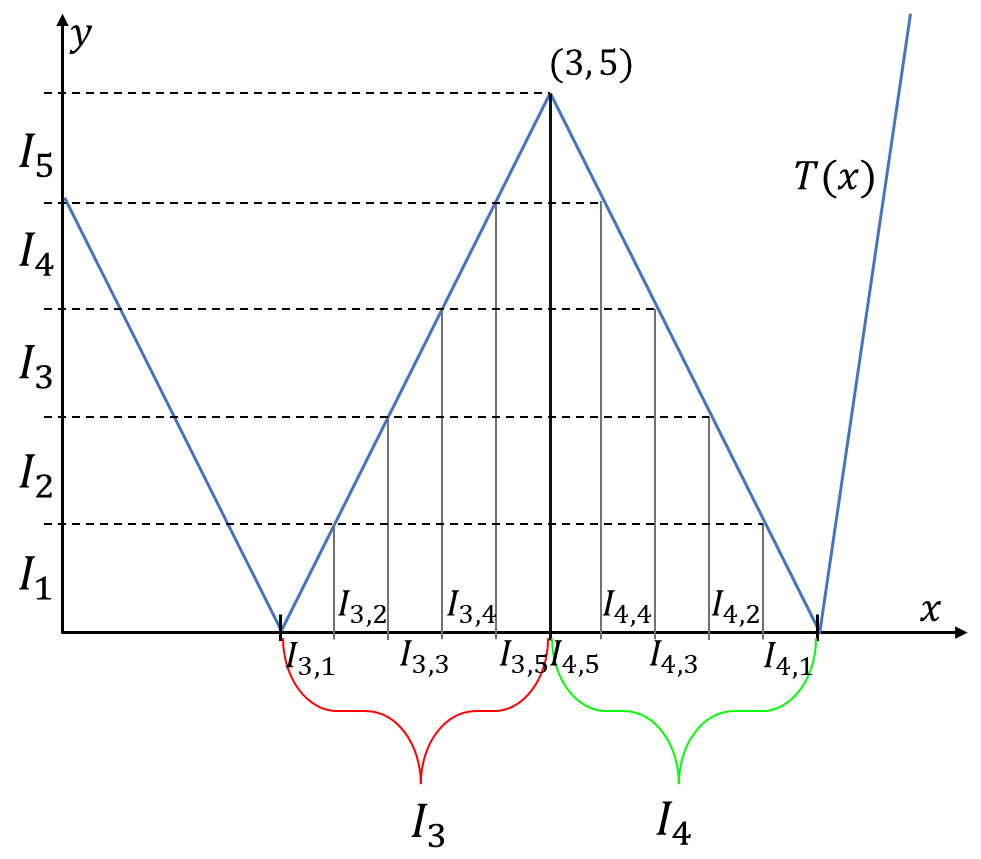}
\centering
\caption{Some labelled intervals of $\mathbb{R}_{\geq 0}$.}
\label{rendabra}
\end{figure}

\begin{theorem}
The set of integer intervals forms an expansive $\ell$-Markov partition of $(\mathbb{R}_{\geq 0}, T)$.
\end{theorem}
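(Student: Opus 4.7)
I would split the proof into two parts: first checking that $\mathscr{H} = \{I_n : n \in \mathbb{Z}^+\}$ satisfies the four conditions in Definition \ref{Markovpart}, then verifying expansivity via Theorem \ref{expans2}.

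For the $\ell$-Markov partition conditions, three of the four are immediate from the construction: each $I_n = [n-1,n]$ is closed with non-empty interior, the interiors are pairwise disjoint, their union is $\mathbb{R}_{\geq 0}$, and linearity of $T$ on each $I_n$ is built into the definition of $T$. The only condition that needs computation is $\interior(T(I_n)) = \interior(\cup \mathscr{A})$ for some $\mathscr{A} \subseteq \mathscr{H}$. For this I would do a case analysis on the parity of the endpoints of $I_n$: exactly one of $n-1,n$ is even (and maps to $0$ under $T$) and the other is odd, say equal to $m$ (and maps to $4^{\ell(m)}+1$). Since $T$ is linear on $I_n$, this gives $T(I_n) = [0,\, 4^{\ell(m)}+1]$, which is exactly $I_1 \cup I_2 \cup \cdots \cup I_{4^{\ell(m)}+1}$, as required. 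The boundary case $n=1$ works because $T(0)=0$ (the integer $0$ is even) and $T(1)=4^1+1=5$.

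For expansivity, I would apply Theorem \ref{expans2}: it is enough to show that for every positive integer sequence $(i_0,i_1,\ldots)$ with $\widetilde{I}_{i_0 i_1 \ldots} \neq \emptyset$ one has $\lambda(I_{i_0 \ldots i_n}) \to 0$. Iterating the formula in Lemma \ref{Rendszer2}(9) and using $\lambda(I_k)=1$ for every $k$, I obtain
$$\lambda(I_{i_0 \ldots i_n}) \;=\; \prod_{k=0}^{n-1} \frac{1}{|T'(I_{i_k})|}.$$
From the previous paragraph, on each $I_n$ the map $T$ is linear between $0$ and $4^{\ell(m)}+1$ over an interval of length $1$, so $|T'(I_n)| = 4^{\ell(m)}+1 \geq 4+1 = 5$ uniformly in $n$. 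Therefore $\lambda(I_{i_0 \ldots i_n}) \leq 5^{-n} \to 0$, which establishes expansivity.

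There is no real obstacle; the whole proof reduces to correctly identifying the image of an integer interval under $T$, after which both the Markov property and the uniform slope bound fall out simultaneously. The only small care needed is the parity case split and the trivial verification at $n=1$.
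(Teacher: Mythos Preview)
Your proof is correct and follows essentially the same approach as the paper: both verify the image $T(I_n)=[0,4^{\ell}+1]$ is a union of integer intervals, and both establish expansivity via the uniform bound $|T'|\geq 5$ together with Lemma~\ref{Rendszer2} to get $\lambda(I_{i_0\ldots i_n})\leq 5^{-n}$ and then invoke Theorem~\ref{expans2}. Your version is slightly more explicit about the parity split and the base case $n=1$, but there is no substantive difference.
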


\begin{proof}[Proof of $\ell$-Markov partition]
We of course have that $\{I_n\}$ is a partition of $\mathbb{R}_{\geq 0}$. We also have linearity of $T$ on these intervals. The image $T(I_n)$ is $[0,4^{\ell(n)}+1] = \bigcup_{i=1}^{\ell(n)+1} I_n$. Thus all the conditions hold.
\end{proof}

\begin{proof}[Proof of expansivity]
All elements of the partition have unit measure. We also have that $|T'(x)| \geq 5$. Thus by Lemma \ref{Rendszer2} we have that for any $i_0, ..., i_n$ the measure $\lambda(I_{i_0...i_n})$ is at most $5^{-n}$. By Theorem \ref{expans2} we have that the partition is expansive.
\end{proof}

Now we know from Theorem \ref{Homeo} that there exists a symbolic space $\Omega_T$ and a topological conjugacy between $(\tilde{\mathbb{R}}_{\geq 0},T)$ and $(\Omega_T, \sigma)$. We describe $\Omega_T$.

\begin{theorem}
The space $\Omega_T$ is composed of exactly those sequences $(i_0, i_1, ...) \in (\mathbb{Z}_{>0})^\mathbb{N}$, which satisfy the following conditions:
\begin{enumerate}
\item For all $n > 0$ we have that $i_{n+1} \in \{1, 4^{\ell(i_n)}+1\}$
\item There exists no $n \in \mathbb{N}$ such that $\sigma^n((i_0, i_1, ...)) = (1,1,1,...)$ or $\sigma^n((i_0, i_1, ...)) = (4^{\ell(i_{n-1})}+1, 4^{\ell(i_{n-1})+1}+1, 4^{\ell(i_{n-1})+2}+1, ...)$ 
\end{enumerate}
\end{theorem}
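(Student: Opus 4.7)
The plan is to prove the two inclusions separately. For the forward direction, I would take $x \in \tilde{\mathbb{R}}_{\geq 0}$ with itinerary $(i_0, i_1, \ldots)$ and verify both conditions. Condition 1 follows at once from the Markov structure already shown: since $T(I_n) = [0, 4^{\ell(n)}+1] = \bigcup_{m=1}^{4^{\ell(n)}+1} I_m$, the requirement $T^{n+1}(x) \in I_{i_{n+1}} \subseteq T(I_{i_n})$ forces $i_{n+1} \leq 4^{\ell(i_n)}+1$. For condition 2, I would show that each of the two excluded tails corresponds to a boundary orbit and hence to a point of $D$: the tail $(1,1,\ldots)$ traps all subsequent $T^j(x)$ in $I_1$, and since $T|_{I_1}$ has slope $5$, Theorem \ref{expans2} shrinks $H_{1,1,\ldots}$ to the single point $\{0\} \subset D$; the escape tail similarly localizes $T^n(x)$ at the boundary integer $4^{\ell(i_{n-1})}+1 \in D$.

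For the reverse inclusion, given a sequence $(i_0, i_1, \ldots)$ satisfying both conditions, Lemma \ref{Rendszer2}(6--8) combined with condition 1 shows that each $H_{i_0 \ldots i_n}$ is a nonempty closed interval, and these are nested. Expansivity and Theorem \ref{expans2} give $\lambda(H_{i_0 \ldots i_n}) \to 0$, so by the nested interval theorem $H_{i_0 i_1 \ldots}$ consists of exactly one point $x$. It remains to verify that $x \in \tilde{\mathbb{R}}_{\geq 0}$, i.e., that $T^k(x) \notin \mathbb{Z}_{\geq 0}$ for any $k$.

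The hard part will be this last verification. I would argue by contradiction: suppose $T^k(x) = m \in \mathbb{Z}_{\geq 0}$. If $m$ is even, then $T^{k+1}(x) = 0$ and the orbit is trapped at $0 \in I_1$, so the itinerary tail from $k+1$ onward must be $(1,1,\ldots)$, contradicting condition 2. If $m$ is odd, then $T^{k+j}(x) = 4^{L_j}+1$ for $j \geq 1$ with $L_j = \ell(m)+j-1$. Each such boundary integer lies in $I_{4^{L_j}+1} \cap I_{4^{L_j}+2}$, so a priori $i_{k+j} \in \{4^{L_j}+1,\, 4^{L_j}+2\}$; however, condition 1 together with the identity $\ell(i_{k+j-1}) = L_j$---which holds in both branches because $\ell$ is constant on consecutive odd--even pairs---forces $i_{k+j} \leq 4^{L_j}+1$, hence $i_{k+j} = 4^{L_j}+1$. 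Consequently $\sigma^{k+1}(i_0, i_1, \ldots) = (4^{L_1}+1, 4^{L_2}+1, \ldots)$, and after checking $\ell(i_k) = \ell(m) = L_1$ (using $i_k \in \{m, m+1\}$), this matches the escape tail of condition 2, the final contradiction. The main subtlety is thus the case $m$ odd, where it is important that condition 1 is strong enough to eliminate the spurious $+2$ branch at every step of the escape.
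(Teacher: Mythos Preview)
Your proposal is correct and follows essentially the same route as the paper: condition 1 is equivalent to the nested intersection $I_{i_0i_1\ldots}$ being nonempty (via the Markov structure and Cantor's nested interval theorem), and condition 2 is equivalent to the unique point of that intersection lying in $\tilde{\mathbb{R}}_{\geq 0}$, because the two forbidden tails are precisely the symbolic descriptions of the orbits $\{0\}$ and $\{4^L+1 \mapsto 4^{L+1}+1 \mapsto \cdots\}$, and every integer is mapped by $T$ into one of these. Your write-up is in fact more careful than the paper's on one point: when $T^k(x)=m$ is an odd integer, the boundary value $4^{L_j}+1$ lies in both $I_{4^{L_j}+1}$ and $I_{4^{L_j}+2}$, and you explicitly use condition 1 together with $\ell(m)=\ell(m+1)$ for odd $m$ to rule out the $+2$ branch at every step; the paper's proof asserts the identification $\{4^{\ell(i_{n-1})}+1\} = I_{4^{\ell(i_{n-1})}+1,\,4^{\ell(i_{n-1})+1}+1,\,\ldots}$ and the phrase ``exactly the itineraries'' without spelling this out.
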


\begin{proof}
We have that $f(I_n) = [0,4^{\ell(i_n)}+1]$, thus if $I_{i_0 ... i_n}$ is non-empty, then $I_{i_0 ... i_{n+1}}$ is non-empty exactly when $i_{n+1} \in \{1, ..., 4^{\ell(i_n)}+1\}$. Thus, if the first condition applies, then $I_{i_0 i_1 ...}$ is non-empty from Cantor's axiom. The only way $\tilde{I}_{i_0 i_1 ...}$ can be empty, if the only element of $I_{i_0 i_1 ...}$ eventually steps on an integer. We have that $\{0\} = I_{1,1,1 ...}$ and that $\{4^{\ell(i_{n-1})}+1\} = I_{4^{\ell(i_{n-1})}+1, 4^{\ell(i_{n-1})+1}+1, 4^{\ell(i_{n-1})+2}+1, ...}$. Thus all sequences removed by the second condition are itineraries of points eventually hitting an integer. Furthermore all integers are mapped to either $0$ or some number of the form $4^k + 1$, thus the itineraries removed by the second condition are exactly the itineraries of points eventually hitting integers.
\end{proof}

\section{Shadowing} \label{shadow}

Shadowing of piecewise linear maps of a compact interval onto itself has been extensively studied by multiple authors. From \cite{Tent} we know that even for tent maps the shadowing property does not hold for an uncountable number of examples. Our system is in many ways similar to tent maps discussed in \cite{Tent}, however we have a non-compact domain, which is generally not considered in the literature. We will show, that our system has the shadowing property.\medskip 

For this section we will have to define the itinerary of not just a point, but a sequence of non-negative reals in general. To make it distinct from the function $\underline{i}$ defined previously we denote this by $\underline{j}$.

\begin{defin}
Let $\underline{j}:\: \mathbb{R}_{\geq 0}^\mathbb{N} \rightarrow \mathbb{Z}_{>0}^\mathbb{N}$ be the symbolic trajectory or itinerary of a sequence: for $x = (x_0, x_1, ...) \in \mathbb{R}_{\geq 0}^\mathbb{N}$ the sequence $\underline{j}(x) = j = (j_0, j_1, ...) \in \mathbb{Z}_{>0}^\mathbb{N}$ denotes the lexicographically smallest sequence of positive integers such that $x_k \in I_{j_k}$ for all $k \in \mathbb{N}$.
\end{defin}

\begin{defin}
For some $H \subset \mathbb{R}$ and $\delta \geq 0$ let us define: $$B(H, \delta)= \{x \in \mathbb{R}: \exists y \in H,\, |x-y| < \delta \}. $$
\end{defin}

\begin{lemma} \label{exp}
For all bounded $H \subset \mathbb{R}_{\geq 0}$ and any $\varepsilon \geq 0$ it is true that $$f(B(H, \varepsilon)) \subseteq B \Big(f(H), \varepsilon \cdot \sup_{y \in B(H,\varepsilon)} |f'(y)| \Big)$$ for any continuous $f: \mathbb{R}_{\geq 0} \rightarrow \mathbb{R}_{\geq 0}$, which is differentiable at all but finitely many points of any bounded subinterval of $\mathbb{R}_{\geq 0}$.
\end{lemma}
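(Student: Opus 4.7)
The plan is essentially to establish a Lipschitz-type bound via the mean value theorem and then translate it into the inclusion statement. If $\varepsilon = 0$ the set $B(H,\varepsilon)$ is empty (the definition uses strict inequality) and the inclusion holds trivially, so assume $\varepsilon > 0$. Similarly, if the supremum $M := \sup_{z \in B(H,\varepsilon)} |f'(z)|$ is infinite, the right-hand side is all of $\mathbb{R}$, so we may also assume $M < \infty$.

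Pick an arbitrary $x \in B(H,\varepsilon)$. By definition there exists $y \in H$ with $|x-y| < \varepsilon$. The closed interval $J = [\min(x,y), \max(x,y)]$ is a bounded subinterval of $\mathbb{R}_{\geq 0}$ on which, by hypothesis, $f$ is continuous and differentiable off a finite exceptional set $F = \{t_1 < \cdots < t_m\} \subset J$. Moreover, every point $z \in J$ satisfies $|z - y| \leq |x-y| < \varepsilon$, so $J \subseteq B(H,\varepsilon)$, and therefore $|f'(z)| \leq M$ wherever $f'(z)$ exists on $J$.

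Next, I would split $J$ at the points of $F$ into finitely many closed subintervals on each of which $f$ is continuous and differentiable in the interior. Applying the standard mean value theorem on each piece and summing the estimates, I obtain
\[
|f(x) - f(y)| \leq \sum_{k} |f'(\xi_k)| \cdot (\text{length of } k\text{-th piece}) \leq M \cdot |x-y| < M \varepsilon.
\]
Since $y \in H$, this says $f(x) \in B(f(y), M\varepsilon) \subseteq B(f(H), M\varepsilon)$, which is exactly the desired containment.

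The only mildly delicate point is the piecewise application of the mean value theorem across the finitely many non-differentiable points; once the reduction to $J \subseteq B(H,\varepsilon)$ is in place, everything else reduces to a standard continuous-piecewise-smooth Lipschitz estimate. I do not anticipate any serious obstacle.
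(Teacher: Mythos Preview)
Your proof is correct and follows essentially the same approach as the paper's: pick $x\in B(H,\varepsilon)$, choose $y\in H$ with $|x-y|<\varepsilon$, and use the Lipschitz bound coming from the supremum of $|f'|$ on $B(H,\varepsilon)$ to conclude $|f(x)-f(y)|<M\varepsilon$. If anything, you are more careful than the paper, which simply asserts the Lipschitz property without spelling out the piecewise mean-value argument or the degenerate cases $\varepsilon=0$ and $M=\infty$.
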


\begin{proof}
If $k = \sup_{y \in B(H,\varepsilon)} |f'(y)|$ then $f$ is $k$-Lipschitz on $B(H,\varepsilon)$. If $x \in B(H,\varepsilon)$, then there is some $y \in H$ such that $|y-x| < \varepsilon$. By the Lipschitz property $|f(x) - f(y)| \leq k |x-y| < k \varepsilon$. This means that $f(x) \in B(f(H), k\varepsilon)$.
\end{proof}

\begin{lemma} \label{contr}
For any $I_n$ for any $H \subset \mathbb{R}_{\geq 0}$ such that $H \subseteq T(I_n)$ the following holds. For any $\varepsilon > 0$ we have that $T^{-1}(B(H,\varepsilon)) \cap I_n \subseteq B(T^{-1}(H)\cap I_n, \frac{\varepsilon}{|T'(I_n)|})\cap I_n$.
\end{lemma}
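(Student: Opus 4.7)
The plan is to exploit the affine (piecewise linear) structure of $T$ on the single interval $I_n$ and invert it explicitly, rather than thinking of $T^{-1}$ as a multivalued map on $\mathbb{R}_{\geq 0}$. Since $T$ is linear on $I_n$ with slope $T'(I_n)$ of absolute value at least $5 > 0$, the restriction $T|_{I_n}: I_n \to T(I_n)$ is a bijection, and its inverse is affine with Lipschitz constant exactly $1/|T'(I_n)|$. The condition $H \subseteq T(I_n)$ in the statement is precisely what guarantees that preimages of points of $H$ actually live in $I_n$, so the one-sided restriction argument closes up.

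Concretely I would pick an arbitrary $x \in T^{-1}(B(H,\varepsilon)) \cap I_n$ and produce a nearby element of $T^{-1}(H) \cap I_n$. By the definition of $B(H,\varepsilon)$, choose $y \in H$ with $|T(x) - y| < \varepsilon$. Since $y \in H \subseteq T(I_n)$ and $T|_{I_n}$ is a bijection onto $T(I_n)$, there is a unique $z \in I_n$ with $T(z) = y$; this $z$ belongs to $T^{-1}(H) \cap I_n$. Affinity of $T$ on $I_n$ then gives
\[
|x - z| \;=\; \frac{|T(x) - T(z)|}{|T'(I_n)|} \;=\; \frac{|T(x) - y|}{|T'(I_n)|} \;<\; \frac{\varepsilon}{|T'(I_n)|},
\]
so $x \in B\bigl(T^{-1}(H)\cap I_n,\, \varepsilon/|T'(I_n)|\bigr)$. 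Since $x \in I_n$ by hypothesis, this yields the claimed inclusion.

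I do not expect a serious obstacle here; the only point that requires any care is that the preimage $z$ must actually sit inside $I_n$, which is exactly where the assumption $H \subseteq T(I_n)$ is used. Without that assumption, one could have $y \in H$ with no preimage in $I_n$, and the affine-inversion step would fail. One should also briefly remark that $|T'(I_n)|$ is well defined and nonzero (indeed bounded below by $5$ as used earlier), so division by $|T'(I_n)|$ causes no issue. Everything else is a one-line Lipschitz computation for an affine map.
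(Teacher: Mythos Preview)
Your proof is correct and follows essentially the same approach as the paper's: pick $x$, find a point of $H$ close to $T(x)$, pull it back to $I_n$ using $H\subseteq T(I_n)$, and then use the affine scaling on $I_n$ to bound the distance. The paper phrases the last step via the interval between $T(x)$ and the chosen point of $H$ rather than a direct Lipschitz identity, but the argument is the same.
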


\begin{proof}
Let us pick any $x \in T^{-1}(B(H,\varepsilon)) \cap I_n$. Then $x\in I_n$ and $T(x) \in B(H,\varepsilon)$. Let us pick a $z \in H$ such that $|z-T(x)|< \varepsilon$. Since $H \subseteq T(I_n)$ there exists $y \in T^{-1}(H) \cap I_n$ such that $T(y) = z$. Let $J$ be the closed interval with endpoints $T(x)$ and $T(y)$ in the correct order. We have already established that the length of $J$ is less than $\varepsilon$. As $T(I_n)$ is an interval, and both $T(x)$ and $T(y)$ are in $T(I_n)$ we have that $J \subseteq T(I_n)$. As $T$ is linear on $I_n$ with slope $T'(I_n)$ we have that $T^{-1}(J) \cap I_n$ is an interval with length less then $\frac{\varepsilon}{|T'(I_n)|}$ and endpoints $x$ and $y$. Thus $x \in B(T^{-1}(H)\cap I_n, \frac{\varepsilon}{|T'(I_n)|})\cap I_n$ as required.
\end{proof}

\begin{defin}
For a dynamical system $(X, f)$ the sequence $\{x_k\}_{k=0}^\infty$ of elements of $X$ is called a $\delta$-pseudo orbit of $f$, if $x_{k+1} \in B(f(x_k), \delta)$ for all $k \in \mathbb{N}$.
\end{defin}

\begin{defin}
Given two sequences of elements of a metric space $x_k$ and $y_k$ it is said that the sequence $y_k$ $\varepsilon$-shadows the sequence $x_k$, if $y_k \in B(x_k, \varepsilon)$ for all indices $k$. 
\end{defin}

\begin{defin}
We say that a dynamical system $(X, f)$ has the shadowing property, if for every $\varepsilon > 0$ there exists a $\delta > 0$ such that every $\delta$-pseudo orbit is $\varepsilon$-shadowed by at least one true orbit.
\end{defin}

\begin{defin}
Let us define the space $\Omega'_T$ to be composed of exactly those sequences of positive integers $(i_0,i_1,...)$ which satisfy that $i_{n+1} \in [1, 4^{\ell(i_{n})}+1]$ for any non-negative integer $n$. 
\end{defin}

For the sequences in $\Omega_T$ this applies automatically, thus $\Omega_T \subseteq \Omega'_T$.

\begin{cor} \label{megengedett}
For all $(i_0,i_1,...) \in \Omega_T'$ for all $n \in \mathbb{N}$ we have that $I_{i_{n+1}} \subseteq T(I_{i_n})$.
\end{cor}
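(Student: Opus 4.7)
The plan is to reduce the claim to a direct containment of intervals by unpacking both sides explicitly. First, I would recall from the verification that $\{I_n\}_{n\in\mathbb{Z}^+}$ is an $\ell$-Markov partition of $(\mathbb{R}_{\geq 0}, T)$ that the image of a single integer interval under $T$ is $T(I_n) = [0, 4^{\ell(n)}+1]$. This is an immediate consequence of the piecewise linearity of $T$ on $I_n$ together with the definition of $T$ at integers, and requires no new work.

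Next I would unpack the left-hand side via the definition $I_{i_{n+1}} = [i_{n+1}-1,\, i_{n+1}]$. The hypothesis $(i_0, i_1, \ldots) \in \Omega'_T$ supplies $1 \leq i_{n+1} \leq 4^{\ell(i_n)}+1$, so in particular $i_{n+1}-1 \geq 0$ and $i_{n+1} \leq 4^{\ell(i_n)}+1$. Combining these two inequalities yields
\[
I_{i_{n+1}} = [i_{n+1}-1,\, i_{n+1}] \;\subseteq\; [0,\, 4^{\ell(i_n)}+1] \;=\; T(I_{i_n}),
\]
which is exactly the statement of the corollary.

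There is no real obstacle here; the condition defining $\Omega'_T$ was engineered precisely so that this containment holds, and the interval $T(I_{i_n})$ was already computed in the proof of the Markov partition property. The role of the corollary is simply to repackage that computation in a form immediately usable in the shadowing arguments of this section, where it will presumably be combined with Lemma \ref{contr} to pull back $\varepsilon$-neighbourhoods of $I_{i_{n+1}}$ under $T$ along a prescribed itinerary in $\Omega'_T$.
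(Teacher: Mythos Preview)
Your argument is correct and matches the paper's intent: the corollary is stated without proof in the paper, being an immediate consequence of the definition of $\Omega'_T$ together with the identity $T(I_n)=[0,4^{\ell(n)}+1]$ established in the verification of the $\ell$-Markov partition property. Your write-up simply makes this implicit reasoning explicit.
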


\begin{theorem}
Let $\{x_k\}_{k=0}^\infty$ be a $\delta$-pseudo orbit. Let $\underline{j}(x)=(i_0, i_1, ...)$. If $\underline{j}(x) \in \Omega'_T$, then for $y \in I_{i_0i_1...}$ the sequence $(y, T(y), T^2(y),...)$, $\varepsilon$-shadows $x_k$ for $\varepsilon = \frac{\delta}{4}$.
\end{theorem}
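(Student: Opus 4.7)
My plan is to construct $y$ as the limit of a sequence of inverse-branch preimages and then bound the shadowing error by a backward induction exploiting the uniform expansion $|T'|\geq 5$.

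For each $n \geq 1$ I would define $y_n$ as the unique point of $I_{i_0 i_1 \ldots i_{n-1}}$ with $T^n(y_n) = x_n$. Well-definedness rests on two facts: since $\underline{j}(x) \in \Omega_T'$, Corollary \ref{megengedett} together with an inductive application of Lemma \ref{Rendszer2} yields that $I_{i_0 \ldots i_{n-1}}$ is a non-empty closed interval with $T^n(I_{i_0 \ldots i_{n-1}}) = T(I_{i_{n-1}}) \supseteq I_{i_n} \ni x_n$; and $T$ restricted to each $I_k$ is a strictly monotone linear bijection onto its image (since $|T'|\geq 5$), so the composition $T^n$ is a bijection from $I_{i_0 \ldots i_{n-1}}$ onto $T(I_{i_{n-1}})$, pinning down $y_n$ uniquely.

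Next, I would prove by backward induction on $k$ from $n$ down to $0$ that
$$d_k := |T^k(y_n) - x_k| \leq \sum_{j=1}^{n-k} \frac{\delta}{5^{j}}.$$
The base case $d_n = 0$ is immediate. For the inductive step, both $T^k(y_n)$ and $x_k$ lie in $I_{i_k}$, on which $T$ is linear with $|T'(I_{i_k})|\geq 5$, so $5 d_k \leq |T^{k+1}(y_n) - T(x_k)|$; combining the triangle inequality with the $\delta$-pseudo orbit bound $|x_{k+1} - T(x_k)| < \delta$ yields $|T^{k+1}(y_n) - T(x_k)| \leq d_{k+1} + \delta$, which closes the recursion.

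Finally, for every $m \geq n$ the point $y_m$ lies in $I_{i_0 \ldots i_n}$, an interval of length at most $5^{-n}$; hence $\{y_n\}$ is Cauchy and converges to a point $y$ lying in every cylinder $I_{i_0 \ldots i_n}$, so $y \in I_{i_0 i_1 \ldots}$ (in fact the unique element, by expansivity). By continuity of $T^k$ on $I_{i_0 \ldots i_{k-1}}$, $T^k(y_n) \to T^k(y)$, so passing to the limit in the backward bound gives
$$|T^k(y) - x_k| \leq \sum_{j=1}^{\infty} \frac{\delta}{5^j} = \frac{\delta}{4}.$$
The main technical obstacle I anticipate is justifying the inverse-branch construction around integer boundaries where several cylinders meet; this is handled cleanly by the strict monotonicity of each $T|_{I_k}$, which makes preimages along any prescribed itinerary in $\Omega_T'$ unambiguous.
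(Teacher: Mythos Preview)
Your proof is correct and rests on the same engine as the paper's: backward contraction through the uniform bound $|T'|\geq 5$, yielding the geometric series $\sum_{j\geq 1}\delta/5^{j}=\delta/4$. The packaging differs. You build auxiliary points $y_n\in I_{i_0\ldots i_{n-1}}$ with $T^n(y_n)=x_n$, run a pointwise backward recursion $d_k\leq (d_{k+1}+\delta)/5$, and then pass to the limit $y_n\to y$. The paper instead works directly with the nested cylinders: it proves by induction on $n$ (for all $k$ simultaneously) that $x_k\in B\bigl(I_{i_k i_{k+1}\ldots i_{k+n}},\,\sum_{j=1}^{n}\delta/5^{j}\bigr)$, using the set-level contraction Lemma~\ref{contr} in place of your pointwise linearity step, and then observes that $\{T^k(y)\}=I_{i_k i_{k+1}\ldots}$. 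This avoids constructing the $y_n$ and the Cauchy/limit argument altogether. Your route is a touch more constructive; the paper's is marginally slicker in that $y$ is taken as given in the statement rather than rebuilt.
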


\begin{proof}

By the definition of a pseudo orbit $x_{n+1} \in B(T(x_n), \delta)$ for any non-negative integer $n$, so $x_n \in T^{-1}(B(x_{n+1}, \delta))$. From here $x_n \in T^{-1}(B(I_{i_{n+1}}, \delta))$. Also $x_n \in I_{i_n}$. Since $|T'(x)|\geq 5$ for all $x \in \mathbb{R}_{\geq 0}$, then from Corollary \ref{megengedett} and Lemma \ref{contr} $x_n \in I_{i_n} \cap T^{-1}(B(I_{i_{n+1}}, \delta)) \subseteq I_{i_n} \cap B(I_{i_n} \cap T^{-1}(I_{i_{n+1}}), \frac{\delta}{5})$. This is a subset of $B(I_{i_n i_{n+1}},  \frac{\delta}{5})$.\medskip

From here we will prove by induction the following: $x_k\in B(I_{i_ki_{k+1}...i_{k+n}},\sum_{i=1}^n \frac{\delta}{5^i})$ for all $n, k \in \mathbb{N}$.\medskip

We will use induction on $n$. For $n=1$ we have just proven the statement for all $k$. We assume that for some $n < N$ the statement holds for every $k$, and we prove that it also holds for $n = N$. Hence $x_{k+1} \in B(I_{i_{k+1}i_{k+2}...i_{k+N}},\sum_{i=1}^{N-1} \frac{\delta}{5^i})$. This is an interval. Now $x_k \in I_{i_k} \cap T^{-1}(B(x_{k+1},\delta)) \subseteq I_{i_k} \cap T^{-1}(B(I_{i_{k+1}i_{k+2}...i_{k+N}},\sum_{i=0}^{N-1} \frac{\delta}{5^i}))$. From Corollary \ref{megengedett} and Lemma \ref{contr} it follows that $I_{i_k} \cap T^{-1}(B(I_{i_{k+1}i_{k+2}...i_{k+N}},\sum_{i=0}^{N-1} \frac{\delta}{5^i})) \subseteq I_{i_k} \cap B(I_{i_k} \cap T^{-1}(I_{i_{k+1}i_{k+2}...i_{k+N}}), \sum_{i=1}^{N} \frac{\delta}{5^i})$. From this it follows that $x_k \in B(I_{i_ki_{k+1}...i_{k+N}},\sum_{i=1}^N \frac{\delta}{5^i})$.\medskip

Since it is a geometric series $\sum_{i=1}^\infty \frac{\delta}{5^i} = \frac{\delta}{4}$. For any non-negative integer $k$, $\{T^k(y)\} = I_{i_ki_{k+1}...}$. From here $x_k \in B(T^k(y), \frac{\delta}{4})$.

\end{proof}

\begin{theorem}
If $\frac{1}{4} > \delta > 0$, then for all $\delta$-pseudo orbits $\{x_n\}_{n=0}^{\infty}$ we have that $\underline{j}(\{T(x_n)\}_{n=0}^{\infty}) \in \Omega'_T$.
\end{theorem}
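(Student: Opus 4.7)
The plan is to put $y_n := T(x_n)$ and $(i_0, i_1, \ldots) := \underline{j}((y_n)_{n\in\mathbb{N}})$, so that $y_n \in I_{i_n}$. Since membership in $\Omega_T'$ requires only $i_{n+1} \in [1, 4^{\ell(i_n)}+1]$ for each $n$ and the lower bound is automatic (the itinerary lives in $\mathbb{Z}_{>0}$), the whole statement reduces to proving $i_{n+1} \leq 4^{\ell(i_n)}+1$. By the lexicographic rule defining $\underline{j}$, one has $i_{n+1} = \max(1, \lceil y_{n+1}\rceil)$, so it suffices to bound $y_{n+1}$ from above by $4^{\ell(i_n)}+1$.

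The $\delta$-pseudo orbit condition $|x_{n+1}-y_n| < \delta$ combined with $y_n \in I_{i_n}$ and $\delta < 1/4 < 1$ forces $x_{n+1}$ to sit in $I_{i_n-1} \cup I_{i_n} \cup I_{i_n+1}$ (the first piece being absent when $i_n = 1$). My plan is to treat the three cases separately. The first two are easy: if $x_{n+1} \in I_{i_n}$ then $y_{n+1} \in T(I_{i_n}) = [0, 4^{\ell(i_n)}+1]$; if $x_{n+1} \in I_{i_n-1}$ then $y_{n+1} \in T(I_{i_n-1}) = [0, 4^{\ell(i_n-1)}+1]$, which is contained in $[0, 4^{\ell(i_n)}+1]$ because $\ell$ is non-decreasing. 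In either case $i_{n+1} \leq 4^{\ell(i_n)}+1$ follows immediately.

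The main obstacle is the remaining case $x_{n+1} \in I_{i_n+1}$ when $\ell(i_n+1) > \ell(i_n)$. Since $i_n \leq 4^{\ell(i_n)}$ by the definition of $\ell$, this strict inequality forces $i_n = 4^{\ell(i_n)}$, a positive power of $4$; in particular $i_n$ is even, so $T(i_n) = 0$, while $T(i_n+1) = 4^{\ell(i_n)+1}+1$. Consequently $T$ is linear on $I_{i_n+1}$ with slope $4^{\ell(i_n)+1}+1$, markedly steeper than on the two neighbouring intervals. Using $x_{n+1} - i_n \in (0, \delta)$ and $\delta < 1/4$, I would compute
\[
y_{n+1} = (x_{n+1} - i_n)\bigl(4^{\ell(i_n)+1}+1\bigr) < \tfrac{1}{4}\bigl(4\cdot 4^{\ell(i_n)}+1\bigr) = 4^{\ell(i_n)} + \tfrac{1}{4},
\]
which is well below $4^{\ell(i_n)}+1$ and yields the required bound on $i_{n+1}$. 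The hypothesis $\delta < 1/4$ is used only here, and is calibrated exactly to prevent the pseudo-orbit from being flung outside $T(I_{i_n}) = [0, 4^{\ell(i_n)}+1]$ by this enlarged slope at integer points that happen to be powers of $4$.
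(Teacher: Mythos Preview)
Your proof is correct and follows essentially the same route as the paper. The paper tracks two itineraries, writing $(j_n)$ for $\underline{j}(\{T(x_n)\})$ and $(i_n)$ for $\underline{j}(\{x_n\})$, and phrases the goal as $I_{j_{n+1}}\subset T(I_{j_n})$; it then observes $i_{n+1}\in\{j_n-1,j_n,j_n+1\}$ and isolates the same critical case $i_{n+1}=j_n+1$ with $j_n=4^m$, using $\delta<\tfrac14$ to force $x_{n+1}\in[4^m,4^m+\tfrac14]$ and hence $T(x_{n+1})\in T(I_{j_n})$. Your single-itinerary formulation and the explicit slope computation reach the identical conclusion by the identical mechanism.
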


\begin{proof}
Let $\underline{j}(\{T(x_n)\}_{n=0}^{\infty}) = (j_0, j_1, ...)$ and $\underline{j}(\{x_n\}_{n=0}^{\infty})=(i_0, i_1, ...)$. We want to prove that $I_{j_{n+1}} \subset T(I_{j_n})$ for all natural $n$. We have seen that $T(x_n) \in B(I_{i_{n+1}}, \delta)$, so if $\delta < 1$, then $i_{n+1} \in \{j_n - 1,\: j_n,\: j_n + 1\}$. If $i_{n+1} \leq j_n$, then $T(I_{i_{n+1}}) \subseteq T(I_{j_n})$, so $I_{j_{n+1}} \subset T(I_{j_n})$.\medskip

If $i_{n+1} = j_n + 1$, the only time when $T(I_{i_{n+1}}) \subseteq T(I_{j_n})$ does not hold, is when $\ell(j_n + 1) \neq \ell(j_n)$, so $j_n = 4^m$ for some positive integer $m$. In this case if $x_{n+1} \in [4^m, 4^m + \frac{1}{4}]$, then $T(x_{n+1}) \in T(I_{j_n})$, so $I_{j_{n+1}} \subset T(I_{j_n})$. From here if $\delta < \frac{1}{4}$ then $(j_0, j_1...) \in \Omega'_T$.
\end{proof}

\begin{theorem} \label{altorbit}
Let $\{x_n\}_{n=0}^{\infty}$ be a $\delta$-pseudo orbit, with $\delta < \frac{1}{4}$. Let $\underline{j}(\{T(x_n)\}_{n=0}^{\infty}) = (j_0, j_1, ...)$ and $\underline{j}(\{x_n\}_{n=0}^{\infty})=(i_0, i_1, ...)$. Now, if $y\in I_{j_0j_1...}$, then $\{y,\;T(y),\;T^2(y)...\}$, $20\delta$-shadows the sequence $\{T(x_{n})\}_{n=0}^{\infty}$.
\end{theorem}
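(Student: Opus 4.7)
The plan is to imitate the induction from the proof of the previous theorem, but applied to the shifted sequence $\{T(x_n)\}_{n=0}^{\infty}$ whose itinerary $(j_0, j_1, \ldots)$ lies in $\Omega'_T$ by that theorem. Corollary \ref{megengedett} then gives $I_{j_{n+1}} \subseteq T(I_{j_n})$ for every $n$, so the nested family $I_{j_0 \ldots j_N}$ is a sequence of non-empty closed intervals; combined with the slope bound $|T'| \geq 5$, its intersection is a single point $y$ whose orbit satisfies $T^k(y) \in I_{j_k j_{k+1} \ldots}$. The goal is then to show $|T^k(y) - T(x_k)| \leq 20\delta$ uniformly in $k$.

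The new difficulty relative to the previous theorem is that $\{T(x_n)\}$ is not literally a pseudo orbit with uniform tolerance: the one-step error $|T(x_{k+1}) - T^2(x_k)|$ can be of order $|T'(I_{j_k})|\delta$, which is unbounded as $j_k \to \infty$. The first substantive step is to prove
\[
|T(x_{k+1}) - T^2(x_k)| \leq 4\,|T'(I_{j_k})|\,\delta.
\]
This follows by applying Lemma \ref{exp} to the ball $B(T(x_k),\delta)$: since $\delta < 1/4$ the ball meets $I_{j_k}$ and at most one adjacent integer interval, and inspecting the explicit slope formula (magnitude $4^{\ell(m)}+1$ on $I_m$ for $m$ odd, and $4^{\ell(m-1)}+1$ for $m$ even) shows that the ratio of slopes on any two adjacent integer intervals is bounded by $(4^{\ell+1}+1)/(4^\ell+1) < 4$. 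Dividing by $|T'(I_{j_k})|$ and using that $T$ is a bijection from $I_{j_k}$ onto $T(I_{j_k}) \supseteq I_{j_{k+1}}$, the unique preimage $\xi_k \in I_{j_k}$ of $T(x_{k+1})$ satisfies $|T(x_k) - \xi_k| \leq 4\delta$.

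With this in hand the induction proceeds as in the preceding theorem. I would prove by induction on $N$ that for every $k, N \in \mathbb{N}$,
\[
T(x_k) \in B\bigl(I_{j_k j_{k+1}\ldots j_{k+N}},\,C_N\bigr), \qquad C_0 = 0,\quad C_{N+1} = \frac{C_N}{5} + 4\delta.
\]
The step from $N$ to $N+1$ combines the estimate $|T(x_k) - \xi_k| \leq 4\delta$ with Lemma \ref{contr} applied to the inductive hypothesis at index $k+1$: from $T(x_{k+1}) \in B(I_{j_{k+1}\ldots j_{k+N+1}},\,C_N)$ and $|T'(I_{j_k})| \geq 5$, Lemma \ref{contr} places $\xi_k$ inside $B(I_{j_k j_{k+1}\ldots j_{k+N+1}},\,C_N/5)$, and the triangle inequality closes the induction.

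Solving the recursion yields $C_N \leq 4\delta \sum_{i=0}^{\infty} 5^{-i} = 5\delta$ for all $N$; since $\lambda(I_{j_k \ldots j_{k+N}}) \leq 5^{-N} \to 0$, passing to the limit $N \to \infty$ gives $|T^k(y) - T(x_k)| \leq 5\delta$, well within the claimed $20\delta$. The main obstacle is the slope-ratio estimate in the first step: it is the only place where the specific arithmetic of $T$ from the Schweitzer problem enters rather than generic expansivity, and one must handle with some care the boundary points where $T(x_k)$ is an integer and the lexicographically smallest convention forces $T(x_k)$ to sit on the right endpoint of $I_{j_k}$.
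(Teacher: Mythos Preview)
Your proposal is correct and in fact yields a sharper constant than the paper. Both arguments rest on the same three ingredients: the previous theorem guarantees $(j_0,j_1,\dots)\in\Omega'_T$ so Corollary~\ref{megengedett} and Lemma~\ref{contr} apply; the adjacent-slope ratio for $T$ is bounded by $4$; and an induction in the depth $N$ controls the distance from $T(x_k)$ to $I_{j_k\ldots j_{k+N}}$. The paper, however, carries the expansion factors $\eta_k=\sup_{y\in B(x_{k+1},\delta)}|T'(y)|$ through the induction and bounds the resulting terms $a_l^{(k)}=\eta_{k+l}/\prod_{m=k}^{k+l} h_{j_m}$ via the ratio estimate $\eta_{k+l}/\eta_{k+l-1}\le 4$, which produces a geometric series with ratio $4/5$ and the constant $4\cdot\sum_{l\ge 0}(4/5)^l=20$.

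Your device of passing to the preimage $\xi_k\in I_{j_k}$ of $T(x_{k+1})$ decouples the expansion from the contraction: the expansion is absorbed once into the uniform bound $|T(x_k)-\xi_k|\le 4\delta$, and the contraction then enters with the full factor $1/|T'(I_{j_k})|\le 1/5$. This gives the simple affine recursion $C_{N+1}=C_N/5+4\delta$ with fixed point $5\delta$, well inside the stated $20\delta$. The only point to be careful about is exactly the one you flag: when $T(x_k)$ is an integer the lexicographic convention places it at an endpoint of $I_{j_k}$, but since $\delta<1/4$ the ball $B(T(x_k),\delta)$ still meets at most one neighbouring integer interval, so the slope-ratio bound and the existence of $\xi_k\in I_{j_k}$ go through unchanged.
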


\begin{proof}
Let us first define some parameters.
\begin{enumerate}
\item Let $h_k$ be the absolute value of the slope of $T$ on $\interior (I_k)$.
\item Let $\eta_k = \sup_{y \in B(x_{k+1},\delta)} |T'(y)| $.
\item Let $a_n^{(k)} = \frac{\eta_{n+k}}{\prod_{l=k}^{k+n} h_{j_l}}$.
\end{enumerate}

We know that $T(x_k) \in B(x_{k+1}, \delta)$. From this $T(T(x_k)) \in T(B(x_{k+1}, \delta))$, so by Lemma \ref{exp} $T(T(x_k)) \in B(T(x_{k+1}), \eta_k \delta)$. Now as a consequence of Corollary \ref{megengedett} and Lemma \ref{contr} $T(x_k) \in I_{j_k} \cap T^{-1}(B(T(x_{k+1}),\eta_k \delta)) \subseteq I_{j_k} \cap T^{-1}(B(I_{j_{k+1}},\eta_k \delta)) \subseteq I_{j_k} \cap B(I_{j_k} \cap T ^{-1}(I_{j_{k+1}}), \frac{\eta_k \delta}{h_{j_k}})$. The last set is equal to $I_{j_k} \cap B(I_{j_kj_{k+1}}, \delta a_0^{(k)})$.\medskip

Let us assume that for all $k$ positive integers and for all $n\leq N$ for some positive integer $N$: 
\begin{equation}\label{kotelezo}
T(x_k) \in B(I_{j_kj_{k+1}...j_{k+n}}, \delta \sum_{l=0}^{n} a_l^{(k)}).
\end{equation} 
We prove that the inclusion \eqref{kotelezo} also holds for $n=N+1$.\medskip

From the assumption $T(x_{k+1}) \in B(I_{j_{k+1}j_{k+2}...j_{k+N+1}}, \delta \sum_{l=0}^{N} a_l^{(k+1)})$. We also know that: \[
T(x_k)\in I_{j_k} \cap B \Bigg( I_{j_k} \cap T^{-1} \big(T(x_{k+1}) \big), \frac{\eta_k \delta}{h_{j_k}} \Bigg).
\] 
Therefore:
\[ T(x_k)\in I_{j_k} \cap B \Bigg( I_{j_k} \cap T^{-1} \Big( B \big(I_{j_{k+1}j_{k+2}...j_{k+N+1}}, \delta \sum_{l=0}^{N} a_l^{(k+1)} \big) \Big), \frac{\eta_k \delta}{h_{j_k}} \Bigg).\] 
By Lemma \ref{contr} we have the following: 
\[
T(x_k)\in I_{j_k} \cap B \Bigg( I_{j_k} \cap B \Big(I_{j_k} \cap T^{-1}(I_{j_{k+1}j_{k+2}...j_{k+N+1}}), \frac{\delta}{h_{j_k}} \sum_{l=0}^{N} a_l^{(k+1)} \Big), \frac{\eta_k \delta}{h_{j_k}} \Bigg).
\]
Therefore:
\[
T(x_k)\in B \Bigg(I_{j_kj_{k+1}...j_{k+N+1}}, \delta \sum_{l=0}^{N+1} a_l^{(k)} \Bigg).\]

This by induction proves that $T(x_k) \in B(I_{j_kj_{k+1}...}, \delta \sum_{l=0}^{\infty} a_l^{(k)}) = B(T^k(y), \delta \sum_{l=0}^{\infty} a_l^{(k)})$ for all integers $k$. If we find an upper bound of $\sum_{l=0}^{\infty} a_l^{(k)}$, then the proof of the theorem is finished.\medskip

From its definition $a_n^{(k)} = \frac{\eta_{n+k}}{\eta_{n+k-1} h_{j_{n+k}}}a_{n-1}^{(k)}$. Since $\ell(x_{n+k+1}) \leq \ell(x_{n+k}) + 1$, $x_{n+k+1} \leq 4^{\ell(x_{n+k})}+1+\delta$, and we have $\delta < 1$, then $\frac{\eta_{n+k}}{\eta_{n+k-1}} \leq 4$. We always have $h_{j_{n+k}} \geq 5$. Now we only need an upper bound on $a_0^{(k)} = \frac{\eta_k}{h_{j_k}}$. We know that $h_{j_k}$ is the absolute value of the slope of $T$ on the integer interval containing $T(x_k)$. Since $T(x_k)\in B(x_{k+1}, \delta)$ this means that $a_0^{(k)}$ is at most four. From here $\sum_{l=0}^\infty a_l^{(k)} \leq \sum_{l=0}^\infty 4(\frac{4}{5})^l = 20$.
\end{proof}

\begin{theorem}
The dynamical system $(\mathbb{R}_{\geq 0}, T)$ has the shadowing property.
\end{theorem}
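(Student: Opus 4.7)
The plan is to bootstrap Theorem \ref{altorbit} by a preimage trick. That theorem already produces, for each $\delta < 1/4$ and each $\delta$-pseudo-orbit $\{x_n\}$, a true orbit $\{T^k(y)\}_{k \geq 0}$ that $20\delta$-shadows the \emph{shifted} sequence $\{T(x_n)\}_{n \geq 0}$; the missing ingredient is to select a suitable preimage $z$ of $y$ in the integer interval containing $x_0$, so that $\{T^k(z)\}_{k \geq 0}$ shadows the original sequence $\{x_n\}$. Given $\varepsilon > 0$, I would choose $\delta < \min(1/4,\, \varepsilon/21)$; the bound $1/4$ activates the two preceding theorems, while $\varepsilon/21$ will force the final shadowing constant below $\varepsilon$.

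Concretely, given a $\delta$-pseudo-orbit $\{x_n\}$, write $(i_0, i_1, \ldots) = \underline{j}(\{x_n\})$ and $(j_0, j_1, \ldots) = \underline{j}(\{T(x_n)\})$. The preceding theorem ensures $(j_n) \in \Omega'_T$, and then a nested-interval argument---using Corollary \ref{megengedett} to see that each $I_{j_0 \ldots j_n}$ is a nonempty closed subinterval of the bounded interval $I_{j_0}$---produces a point $y \in I_{j_0 j_1 \ldots}$. Theorem \ref{altorbit} then yields $|T^k(y) - T(x_k)| < 20\delta$ for every $k \geq 0$. Since $T(x_0) \in I_{j_0}$ and $T(x_0) \in T(I_{i_0}) = [0,\, 4^{\ell(i_0)}+1]$, which is a union of consecutive integer intervals, we have $I_{j_0} \subseteq T(I_{i_0})$ and in particular $y \in T(I_{i_0})$. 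Because $T|_{I_{i_0}}$ is a linear bijection onto $T(I_{i_0})$ with $|T'| \geq 5$, there is a unique $z \in I_{i_0}$ with $T(z) = y$, and
\[
|z - x_0| \;\leq\; \frac{|y - T(x_0)|}{5} \;<\; 4\delta.
\]

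For $k \geq 1$ the identity $T^k(z) = T^{k-1}(y)$, combined with the pseudo-orbit bound $|T(x_{k-1}) - x_k| < \delta$, gives
\[
|T^k(z) - x_k| \;\leq\; |T^{k-1}(y) - T(x_{k-1})| + |T(x_{k-1}) - x_k| \;<\; 20\delta + \delta \;=\; 21\delta,
\]
so $\{T^k(z)\}_{k \geq 0}$ is a true orbit $21\delta$-shadowing $\{x_n\}$, hence $\varepsilon$-shadowing it. The only subtle point---more a bookkeeping detail than a real obstacle, since the bulk of the work is already done in Theorem \ref{altorbit}---is the inclusion $I_{j_0} \subseteq T(I_{i_0})$, which is precisely what guarantees that $y$ has a preimage inside the correct Markov cell $I_{i_0}$ so that the uniform slope bound $|T'| \geq 5$ can be used to control $|z - x_0|$.
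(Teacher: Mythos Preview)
Your proof is correct and follows essentially the same strategy as the paper: bootstrap Theorem~\ref{altorbit} via a one-step preimage to undo the index shift, arriving at the same $21\delta$ shadowing bound. The only difference is the order of operations: the paper prepends a preimage $z_0 \in T^{-1}(x_0)$ to the \emph{pseudo-orbit} (obtaining a new $\delta$-pseudo-orbit $(z_0, x_0, x_1, \dots)$) and then reapplies Theorem~\ref{altorbit}, whereas you apply Theorem~\ref{altorbit} first and then pull back the resulting \emph{true orbit} by choosing $z \in I_{i_0}$ with $T(z)=y$. Your route costs the extra verification $I_{j_0} \subseteq T(I_{i_0})$ (which you handle correctly), while the paper's route sidesteps that check at the expense of invoking the theorem a second time; the two are equivalent in substance.
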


\begin{proof}
We have seen in Theorem \ref{altorbit}, that for $\delta < \frac{1}{4}$ for every $\delta$-pseudo orbit $\{x_k\}_{k=0}^{\infty}$ we have a $y_x$ for which the sequence $T^k(y_x)$ $20\delta$-shadows the sequence $T(x_{k})$. This means that $x_{k+1} \in B(T^k(y_x), 21\delta)$. Now selecting $z_0 \in T^{-1}(x_0)$ and $z_n = x_{n-1}$ for every positive integer $n$, and we find $y_z$, then the sequence $\{x_k\}_{k=0}^{\infty}$ is $21\delta$-shadowed by the sequence $T^k(y_z)$, a true orbit.
\end{proof}

\section{Orbit of rationals} \label{orbit of rationals}

The representation of real numbers by discrete time dynamical systems is a widely considered concept. We would like to encode numbers by their symbolic trajectories. This serves as a generalization of the $q$-adic expansion, as it is stated for example in \cite{adler}. In our case, with our Markov maps we obtain real numbers as Cantor series. The influential work of Rényi \cite{renyi} on $f$-expansions should be mentioned. It generalizes the $q$-adic expansion of real numbers in a different way.

\subsection{Theoretical results} \label{theoretical_results}

We deal with dynamical systems $(X, f)$, where $X \subseteq \mathbb{R}$. About $f$ further assumptions will be made. In this section we allow integer intervals to have negative indices. To avoid ambiguity we introduce a new notation.

\begin{defin}
Let $J_n := [n-1, n]$ for arbitrary $n \in \mathbb{Z}$. Let $J_{i_0i_1...} := \bigcap_{k=0}^{\infty}f^{-k}(J_{i_k})$ for an arbitrary $i_0, i_1...$ finite or infinite integer sequence. If the sequence is finite, then we only take finite intersection.
\end{defin}

\begin{defin}[Assumptions on $f$] \label{z1}
For $f$ the following assumptions hold:
\begin{enumerate}
\item Some subset $\mathscr{J}$ of the integer intervals forms an $\ell$-Markov partition of $(X,f)$ as in Definition \ref{Markovpart}.
\item For every $J_i \in \mathscr{J}$ the set $\mathscr{A} \subseteq \mathscr{J}$ from Definition \ref{Markovpart}, for which $\interior (f(J_i)) = \interior (\cup \mathscr{A})$ has at least two elements.
\end{enumerate}
\end{defin}

Since all integer intervals have unit Lebesgue-measure and the second assumption ensures that on every integer interval the magnitude of the slope is at least two we have that $\mathscr{J}$ is an expansive $\ell$-Markov partition. In this case a bit weaker form of Lemma \ref{Rendszer} is still true. We also refer to Lemma \ref{Rendszer2}.

\begin{lemma} \label{Rendszer3}
For a given finite sequence of integers $i_0, ..., i_n$ the following are true for $J_{i_0...i_n}$:
\begin{enumerate}
\item The set $\{k: \interior (J_k) \subseteq f(J_{n})\}$ for any integer $n$ is a set of consecutive integers.
\item Consider the sets $J_{i_0...i_nk}$ where $k \in \{m: \interior (J_m) \subseteq f(J_{i_n})\}$ in the order they appear on the real line. We have that their $(n+1)$st index, $k$ either increases or decreases from left to right. It increases if and only if there is an even number of indices $j$ among $i_0, ..., i_n$ such that $f$ is decreasing on $\widetilde{J_j}$.
\item For $a, a+1 \in \{m: \interior (J_m) \subseteq f(J_{i_n})\}$ we have that $\overline{\interior (J_{i_0...i_na})} \cup \overline{\interior (J_{i_0...i_n(a+1)})}$ is an interval.
\item For a point $x \in X$ there 
exists a natural number $k$ for which $f^k(x)\in \mathbb{Z}$ if and only if for that $k$ and for some $n\in \mathbb{Z}$ we have that $f^k(x) \in \partial J_n$.
\item If $\interior (J_{i_0...i_nk}) \cap \interior (J_{i_0...i_n})  \neq \emptyset$ then $\lambda(J_{i_0...i_nk}) = \frac{1}{\#\{m: \interior (J_m) \subseteq f(J_{i_n})\}} \lambda(J_{i_0...i_n})$.
\end{enumerate}
\end{lemma}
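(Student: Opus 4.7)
The plan is to handle the five parts in order of increasing subtlety, leaning heavily on Lemma \ref{Rendszer2} and the fact that each $J_n$ is a closed interval of unit length with integer endpoints.

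Parts (1), (4) and (5) are essentially bookkeeping. For (1): $f(J_n)$ is a closed interval by linearity, and Definition \ref{Markovpart}(3) gives $\emptyset \neq \mathscr{A} \subseteq \mathscr{J}$ with $\interior(f(J_n)) = \interior(\cup \mathscr{A})$; since the elements of $\mathscr{A}$ are pairwise interior-disjoint unit intervals with integer endpoints whose union has connected interior, their indices must be consecutive integers. Part (4) is immediate: $\partial J_n = \{n-1, n\} \subseteq \mathbb{Z}$, and conversely every integer $k$ lies in $\partial J_k$. For (5): Lemma \ref{Rendszer2}(9) gives $\lambda(J_{i_0 \ldots i_n k}) = \lambda(J_k)/|f'(J_{i_n})| \cdot \lambda(J_{i_0 \ldots i_n})$; since $\lambda(J_k) = 1$ and $f$ is linear on $J_{i_n}$ of unit length, $|f'(J_{i_n})|$ equals $\lambda(f(J_{i_n}))$, which by part (1) is exactly $\#\{m : \interior(J_m) \subseteq f(J_{i_n})\}$.

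For (3), by Lemma \ref{Rendszer2}(1) and (4) the map $f^{n+1}$ restricts to a homeomorphism from $\interior(J_{i_0 \ldots i_n})$ onto $\interior(f(J_{i_n}))$; being the restriction of a composition of linear maps, it extends continuously and linearly to the closure, hence as a homeomorphism between closed intervals. The preimage of the closed interval $J_a \cup J_{a+1} = [a-1, a+1]$ under this extension is therefore a single closed interval, and it coincides with $\overline{\interior(J_{i_0 \ldots i_n a})} \cup \overline{\interior(J_{i_0 \ldots i_n (a+1)})}$ because the two closures meet at the preimage of the point $a$.

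The main obstacle is part (2), where orientation must be tracked across the entire composition. On $\interior(J_{i_0 \ldots i_n})$, the map $f^{n+1}$ equals the composition of the linear restrictions $f|_{\interior(J_{i_j})}$ for $j = 0, \ldots, n$, using Lemma \ref{Rendszer2}(4) to ensure that each partial image lies in the next relevant interval. Hence $f^{n+1}$ is linear there with slope $\prod_{j=0}^{n} f'(J_{i_j})$, whose sign is negative exactly when an odd number of the factors are negative, i.e., when $f$ is decreasing on $\widetilde{J_{i_j}}$ for an odd number of $j \in \{0, \ldots, n\}$. Since $J_{i_0 \ldots i_n k}$ is the preimage of $J_k$ under this linear map, and the admissible $k$ form a set of consecutive integers by part (1), the position of $J_{i_0 \ldots i_n k}$ within $J_{i_0 \ldots i_n}$ varies monotonically in $k$; it increases from left to right precisely when $f^{n+1}$ preserves orientation, yielding the stated parity condition.
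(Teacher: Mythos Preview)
Your proposal is correct. The paper's own proof is a single sentence---``These statements all follow simply from our assumptions about $f$''---so you have supplied precisely the detailed verification that the paper omits, using Lemma~\ref{Rendszer2} and the unit-length integer-endpoint structure of the $J_n$ in the expected way.
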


\begin{proof}
These statements all follow simply from our assumptions about $f$. 
\end{proof}

\begin{defin}
Let us consider a finite integer sequence $i_0i_1...i_n$ for which $\interior (J_{i_0...i_n})$ is non-empty. If the number of elements $j$ in $i_0...i_n$ for which the function $f$ is decreasing on $J_j$ is even then we say that the direction of $J_{i_0...i_n}$ is forward, otherwise it is backward.
\end{defin}

We would like to understand how the rationals behave in our system. Since $f$ may be discontinuous at any $n \in \mathbb{Z}$, the orbit of integers cannot be meaningfully described. Thus we only consider rationals which have no integers in their orbit. Thus from now on we consider the $(\widetilde{X},f)$ system.

\begin{defin} \label{regret}
Let $L_k(x)$ and $U_k(x)$ be the $k$th lower and upper regressors of $x$ meaning that: \[
L_k(x) = \inf\{y\in \widetilde{X}:\: y \in \widetilde{J}_{\underline{i}(x)|k}\}, \quad
U_k(x) = \sup\{y\in \widetilde{X}:\: y \in \widetilde{J}_{\underline{i}(x)|k}\}.
\]
\end{defin}

\begin{defin} \label{dif_ferenc}
Let $d_k(x) = L_k(x) - L_{k-1}(x)$ for $k\geq1$.
\end{defin}

Suppose that $\widetilde{J}_{i_0...i_n}$ is a nonempty set. We define tags of the nonempty sets of the form $\widetilde{J}_{i_0...i_nk}$.

\begin{defin}
Let $t: \Omega \rightarrow \mathbb{Z}$ be the tag function, where $\Omega$ is the space of finite integer sequences $(j_0, ..., j_n)$, for which $\widetilde{J}_{{j_0...j_n}} \neq \emptyset$. Let $t(i)=i$ for all integers $i$ such that $J_i \in \mathscr{J}$. We define the value of $t(i_0, ... ,i_n, k)$, where $(i_0, ... ,i_n, k) \in \Omega$. We use the following variables: 
$$N = \#\{p: \interior (J_p) \subseteq f(J_{i_n})\},$$  $$M = \max \{p: \interior (J_p) \subseteq f(J_{i_n})\},\: m = \min \{p: \interior (J_p) \subseteq f(J_{i_n})\}.$$ 
Let the function $t$ be a bijection between elements of $\Omega$ of the form $(i_0, ... ,i_n, k)$, and the integers $\{0,...,N-1\}$. If $\widetilde{J}_{i_0...i_n}$ is forward then for $a \in \{0,...,N-1\}$ we have $a = t(i_0, ..., i_n, (a+m))$ and if $\widetilde{J}_{i_0...i_n}$ is backward then $a= t(i_0, ..., i_n, (M-a))$. It is clear that $t$ only depends on the sequence given as an input.
\end{defin}

\begin{theorem} \label{regrDif}
Let $\underline{i}(x) = (i_0,i_1...)$ for some $x \in \widetilde{X}$. For some $a_n, M_n \in \mathbb{Z}$ we have that $d_n(x) = \frac{a_n}{M_n}$ where $M_n = \prod_{k=0}^{n-1} \#\{p: \interior (J_p) \subseteq f(J_{i_k})\} = \prod_{k=0}^{n-1} |f'(J_{i_k})|$ and $a_n = t(i_0, ..., i_n)$ is the tag of $\widetilde{J}_{i_0...i_n}$ as a subset of $\widetilde{J}_{i_0...i_{n-1}}$.
\end{theorem}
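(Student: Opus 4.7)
The plan is to reduce $d_n(x)$ to a product of two elementary quantities: the common length of a level-$n$ cell, and the number of sibling cells lying strictly to the left of $\widetilde{J}_{i_0\ldots i_n}$ inside its parent $\widetilde{J}_{i_0\ldots i_{n-1}}$. Once these two factors are separated, the claim will fall out directly.

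To pin down the length factor I will induct on $n$. The base case is $\lambda(J_{i_0})=1=1/M_0$ with the empty-product convention. For the step, item 5 of Lemma \ref{Rendszer3} gives $\lambda(J_{i_0\ldots i_n})=\lambda(J_{i_0\ldots i_{n-1}})/N_{n-1}$, where I write $N_{n-1}=\#\{p:\interior(J_p)\subseteq f(J_{i_{n-1}})\}$, and iterating yields $\lambda(J_{i_0\ldots i_n})=1/M_n$. The identification $N_k=|f'(J_{i_k})|$ will then be immediate from the setup: $f$ is linear on $J_{i_k}$ with constant slope, $J_{i_k}$ has unit length, and $f(J_{i_k})$ is a union of $N_k$ unit-length integer intervals, so $N_k=\lambda(f(J_{i_k}))=|f'(J_{i_k})|\cdot\lambda(J_{i_k})=|f'(J_{i_k})|$.

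For the position factor I will invoke items 1--3 of Lemma \ref{Rendszer3}. Item 1 tells me the admissible last indices form a set of consecutive integers $\{m,m+1,\ldots,M\}$; item 2 says the siblings $\widetilde{J}_{i_0\ldots i_{n-1}k}$ are arranged monotonically in $k$ along the real line (increasing if the parent is forward, decreasing if backward); and item 3 guarantees that consecutive siblings share an endpoint, so together they tile the parent interval. Hence the left endpoint of $\widetilde{J}_{i_0\ldots i_n}$ sits exactly $a_n$ sibling-widths to the right of $L_{n-1}(x)$, where $a_n$ is precisely the number of siblings strictly to its left. A direct check of the tag definition confirms that this count matches $t(i_0,\ldots,i_n)$: in the forward case the leftmost sibling has last index $m$ and the assignment $i_n=a+m$ puts it at position $a$ from the left, while in the backward case the leftmost sibling has last index $M$ and $i_n=M-a$ again puts it at position $a$ from the left.

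Combining the two ingredients yields $d_n(x)=L_n(x)-L_{n-1}(x)=a_n\cdot\lambda(J_{i_0\ldots i_n})=a_n/M_n$, as claimed. There will be no real obstacle here; the only slightly delicate point is verifying that the forward/backward convention built into the tag definition matches the left-to-right enumeration of siblings on the real line, which is essentially a notational check supplied by items 1--3 of Lemma \ref{Rendszer3}.
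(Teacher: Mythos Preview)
Your argument is correct and follows the same route as the paper: compute $\lambda(J_{i_0\ldots i_n})=1/M_n$ from Lemma~\ref{Rendszer3}, then read off from the definition of the tag that $\widetilde{J}_{i_0\ldots i_n}$ sits $a_n$ sibling-widths to the right of $L_{n-1}(x)$. The paper's proof is a two-line compression of exactly what you wrote out in detail.
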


\begin{proof}
We have by Lemma \ref{Rendszer3} that $\lambda (\widetilde{J}_{i_0...i_{n-1}})$ is always $\nicefrac{1}{M_{n-1}}$ if it is nonempty. If $\overline{\widetilde{J}_{i_0...i_{n-1}}} = [x,y]$ for $x,y \in \mathbb{R}$ then from the definition of the tag we have that $\overline{\widetilde{J}_{i_0...i_n}} = [x + \frac{a_n}{M_n}, x + \frac{a_n+1}{M_n}]$, which implies the statement.
\end{proof}

\begin{cor} \label{regreq}
We have that $\lim_{k\rightarrow \infty} L_k(x) = x$ and $L_0(x) + \sum_{k=1}^\infty d_k(x) = x$.
\end{cor}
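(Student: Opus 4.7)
The plan is to reduce both claims to the single assertion $\lim_{n\to\infty} L_n(x) = x$. The series identity then follows by telescoping: for every $n \geq 1$,
$$L_0(x) + \sum_{k=1}^{n} d_k(x) = L_0(x) + \sum_{k=1}^{n} \bigl(L_k(x) - L_{k-1}(x)\bigr) = L_n(x),$$
so $L_n(x) \to x$ simultaneously gives the first claim and convergence of the partial sums. Since Theorem \ref{regrDif} expresses $d_k(x) = a_k/M_k$ with $a_k$ a non-negative tag, the summands are non-negative and convergence of partial sums is equivalent to convergence of the infinite series.

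To establish $L_n(x) \to x$, write $\underline{i}(x) = (i_0, i_1, \ldots)$. By the definition of the itinerary, $x \in \widetilde{J}_{i_0\ldots i_n}$ for every $n \in \mathbb{N}$. By Lemma \ref{Rendszer2}(2), $\interior(J_{i_0\ldots i_n})$ is a nonempty open interval; since $D$ is countable, $\widetilde{J}_{i_0\ldots i_n}$ is dense in it, and therefore the infimum $L_n(x)$ equals the left endpoint of $J_{i_0\ldots i_n}$. In particular $L_n(x) \leq x$ and
$$0 \leq x - L_n(x) \leq \lambda(J_{i_0\ldots i_n}).$$

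It remains to control $\lambda(J_{i_0\ldots i_n})$. Iterating Lemma \ref{Rendszer3}(5) yields $\lambda(J_{i_0\ldots i_n}) = 1/M_n$, where $M_n = \prod_{k=0}^{n-1} \#\{p: \interior(J_p) \subseteq f(J_{i_k})\}$. The second clause of Definition \ref{z1} ensures that each factor in this product is at least $2$, so $M_n \geq 2^n$ and hence $x - L_n(x) \leq 2^{-n} \to 0$.

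The step that needs the most care is identifying $L_n(x)$ with the left endpoint of the cylinder $J_{i_0\ldots i_n}$ rather than as a bare infimum over the dense but punctured set $\widetilde{J}_{i_0\ldots i_n}$; everything else is either an immediate telescoping or a direct appeal to the earlier structural lemmas. Once that identification is in place, the slope lower bound of $2$ built into Definition \ref{z1} delivers the geometric decay $\lambda(J_{i_0\ldots i_n}) \leq 2^{-n}$ that finishes the argument.
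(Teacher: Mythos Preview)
Your proof is correct and supplies exactly the details the paper leaves implicit: in the paper this statement is recorded as a corollary with no proof, the intended justification being the telescoping identity together with the bound $\lambda(J_{i_0\ldots i_n})\le 2^{-n}$ coming from Definition~\ref{z1}(2) and Lemma~\ref{Rendszer3}(5). Your write-up matches that approach.
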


\begin{theorem} \label{periodic_is_rat}
All $x\in \widetilde{X}$ which have eventually periodic orbits are rational.
\end{theorem}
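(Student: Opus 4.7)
The plan is to exploit the decomposition $x = L_0(x) + \sum_{k=1}^{\infty} d_k(x)$ from Corollary \ref{regreq} together with Theorem \ref{regrDif}, which writes each term as $d_k(x) = a_k/M_k$ with $M_k = \prod_{j=0}^{k-1} |f'(J_{i_j})|$ and $a_k$ an integer tag. Since $L_0(x) = i_0 - 1 \in \mathbb{Z}$, it suffices to show that $\sum_{k=1}^{\infty} a_k/M_k$ sums to a rational number whenever the itinerary $\underline{i}(x) = (i_0, i_1, \ldots)$ is eventually periodic. Eventual periodicity of the orbit $\{f^n(x)\}$ translates at once to eventual periodicity of the itinerary: if $f^{m+p}(x) = f^m(x)$, then $i_{n+p} = i_n$ for all $n \geq m$.

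The subtlety is that the tag $a_k = t(i_0, \ldots, i_k)$ is not determined by $(i_{k-1}, i_k)$ alone — it also depends on whether $\widetilde{J}_{i_0 \ldots i_{k-1}}$ is forward or backward, and this direction flips each time we append an index on which $f$ is decreasing. Let $d$ denote the number of indices $j \in \{m, \ldots, m+p-1\}$ for which $f$ is decreasing on $\widetilde{J_{i_j}}$. Set $P := p$ if $d$ is even and $P := 2p$ if $d$ is odd. Then the number of decreasing-$f$ indices in any $P$ consecutive itinerary entries past index $m$ is even, so for $n \geq m$ both the symbol $i_n$ and the direction of $\widetilde{J}_{i_0 \ldots i_n}$ are $P$-periodic in $n$. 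Consequently the quantities $\min\{q : \interior(J_q) \subseteq f(J_{i_n})\}$, $\max\{q : \interior(J_q) \subseteq f(J_{i_n})\}$ and the direction that together determine the tag formula are $P$-periodic, so the tag sequence $a_{n+1}$ is $P$-periodic from $n \geq m$ onward.

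Set $Q := \prod_{r=0}^{P-1} |f'(J_{i_{m+r}})|$, a positive integer at least $2^P$ by Definition~\ref{z1}(2). Using the $P$-periodicity of $(i_n)_{n \geq m}$, a short telescoping argument gives $M_{m+jP+r} = Q^j \cdot M_{m+r}$ and $a_{m+jP+r} = a_{m+r}$ for all $j \geq 0$ and $1 \leq r \leq P$. Splitting the tail and summing the geometric series in $j$,
\begin{equation*}
\sum_{k=1}^{\infty} \frac{a_k}{M_k} \;=\; \sum_{k=1}^{m} \frac{a_k}{M_k} \;+\; \sum_{r=1}^{P} \frac{a_{m+r}}{M_{m+r}} \sum_{j=0}^{\infty} Q^{-j} \;=\; \sum_{k=1}^{m} \frac{a_k}{M_k} \;+\; \frac{Q}{Q-1} \sum_{r=1}^{P} \frac{a_{m+r}}{M_{m+r}}.
\end{equation*}
The right-hand side is a finite sum of rationals, hence rational, and adding $L_0(x) \in \mathbb{Z}$ yields $x \in \mathbb{Q}$.

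The hardest step is the direction bookkeeping in the middle paragraph: one must check carefully that doubling the period absorbs the direction flip in the odd case, so that the tag sequence becomes genuinely $P$-periodic and not merely eventually periodic up to a sign convention. Once that is in place, the remainder of the argument is a routine geometric-series calculation driven by the uniform lower bound $|f'(J_i)| \geq 2$.
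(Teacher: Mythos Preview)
Your argument is correct, but it takes a substantially different route from the paper's. The paper first reduces from eventually periodic to periodic (since $f$ has rational linear pieces, images and preimages of rationals are rational), and then observes that on $\widetilde{J}_{\underline{i}(x)|n}$ the iterate $f^n$ is a single linear map $y\mapsto ay+b$ with $a,b\in\mathbb{Q}$ and $|a|>1$; periodicity gives $x=ax+b$, hence $x=\tfrac{b}{1-a}\in\mathbb{Q}$. This is a two-line algebraic fixed-point argument.

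Your proof instead works entirely through the Cantor-series representation of Corollary~\ref{regreq}: you show that eventual periodicity of the orbit forces eventual periodicity of the tag sequence $(a_k)$ (after doubling the period to absorb the direction parity), and then sum the resulting eventually-periodic series geometrically. The direction bookkeeping is handled correctly, and the independence of $Q=\prod_{r=0}^{P-1}|f'(J_{i_{m+r}})|$ from the offset $r$ follows from the $P$-periodicity of $(i_n)_{n\ge m}$. The paper's approach is shorter and avoids the tag/direction machinery altogether; your approach is more explicit about the digit expansion and would be the natural one if one wanted to read off the rational value of $x$ directly from its itinerary.
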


\begin{proof}
On the interior of an integer interval $f(x) = ax+b$ where $a$ and $b$ are rational. This implies that the $f$ preimage of a rational number is a set of rationals and the image of a rational is rational. This means that it suffices to show that all periodic points are rational.\medskip

Let us suppose that the orbit of $x\in \widetilde{X}$ is $n$ periodic, thus $x = f^n(x)$. Let $\underline{i}(x)=(i_0,i_1...)$. From the definition of $\widetilde{X}$ we have that $f^n(x) \in \interior (J_{i_n})$, thus the restriction of $f^n$ to $\widetilde{J}_{\underline{i}(x)|n}$ is a composition of functions of the form $ax+b$, where $a$ and $b$ are rational, thus $f^n$ itself is a function of the form $ax+b$. Since $|f'(J_k)|\geq 2$ for all $k$, we have that $|a|>1$. From periodicity we have that $x = ax+b$, thus $x = \frac{b}{1-a}$, a rational number.
\end{proof}
For the following proofs recall that for any $a>b$ integers and for any $x_n$ sequence of reals by definition $\prod_{n=a}^b x_n = 1$ and $\sum_{n=a}^b x_n = 0$, as they are empty products and sums.

\begin{theorem} \label{bondolt}
The orbit of all rationals $x\in \widetilde{X}$ for which $\liminf_{k\rightarrow \infty} |f^k(x)| < \infty$ is eventually periodic.
\end{theorem}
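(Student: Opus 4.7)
The plan is to show that the set of forward iterates $\{f^k(x) : k \geq 0\}$ is in fact finite, after which eventual periodicity follows from a pigeonhole argument combined with the subsequence provided by the $\liminf$ hypothesis.

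First I would establish that $f$ restricted to the interior of any integer interval $J_n \in \mathscr{J}$ is of the form $y \mapsto a y + b$ with $a, b \in \mathbb{Z}$. The slope has integer absolute value equal to $\#\{p : \interior(J_p) \subseteq f(J_n)\}$ by Definition \ref{z1} and Lemma \ref{Rendszer3}. Moreover, the Markov property forces $\interior(f(J_n))$ to be a union of interiors of integer intervals, so $f(n-1)$ and $f(n)$ are integers; combined with the integer slope this forces the additive constant $b$ to be an integer as well. Writing $x = p/q$ with $q$ a positive integer, it then follows inductively that every $f^k(x)$ can be written as $m_k / q$ for some $m_k \in \mathbb{Z}$ — the common denominator $q$ persists under iteration.

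Next I would exploit the hypothesis $\liminf_{k \to \infty} |f^k(x)| < \infty$ to extract an $M > 0$ and an infinite increasing sequence $k_1 < k_2 < \dots$ of indices with $|f^{k_j}(x)| \leq M$ for every $j$. Because each such value lies in the finite set
\[
\{m/q : m \in \mathbb{Z},\; |m/q| \leq M\},
\]
the pigeonhole principle yields indices $k_{j_1} < k_{j_2}$ with $f^{k_{j_1}}(x) = f^{k_{j_2}}(x) =: y$. Setting $p = k_{j_2} - k_{j_1}$, we get $f^p(y) = y$, so $y$ is periodic and thus $x$ is eventually periodic, as required.

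The argument is essentially routine once the integer-affine structure on each $J_n$ is in hand; that step is the only one requiring care, because a priori the intercept $b$ need only be rational. The point $x \in \widetilde{X}$ ensures no iterate sits on an integer, so the formula $f(y) = ay + b$ applies at every step without ambiguity about which branch of $f$ is being used. No appeal to the symbolic machinery of Section \ref{sec1} is actually needed here; the proof is purely arithmetic, relying on the rigidity of integer slopes inherited from Definition \ref{z1}.
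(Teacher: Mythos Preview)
Your argument is correct and in fact more direct than the paper's own proof. The key observation---that on each $J_n$ the map has the form $y\mapsto ay+b$ with $a,b\in\mathbb{Z}$---follows exactly as you indicate: Lemma \ref{Rendszer2}(1) gives $f(\interior(J_n))=\interior(f(J_n))$, which by Lemma \ref{Rendszer3}(1) is an open interval with integer endpoints, and linearity on $(n-1,n)$ then forces both slope and intercept to be integers. Hence the denominator $q$ is preserved under iteration and the pigeonhole step is immediate.

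The paper instead introduces the Cantor--series representation $x=\sum a_n/\prod m_k$, tracks the triple (remainder $r_k$ modulo $q$, interval index $i_k$, direction of $J_{\underline{i}(x)|k}$), and shows that a repetition of this triple forces the \emph{itinerary} to become periodic, whence the orbit is periodic by the conjugacy with the shift. This is more elaborate, but the machinery is not wasted: the same remainder/tag calculus is the engine behind Corollary \ref{condpigeon} (the explicit bound $4nq$ on visits to $[-n,n]$), Corollary \ref{Ergod}, and above all Theorem \ref{last}, where one must control orbits that escape to infinity and the simple ``fixed denominator'' observation no longer closes the argument by itself. Your route is cleaner for Theorem \ref{bondolt} in isolation and even yields a slightly sharper visit bound (roughly $2nq$ rather than $4nq$); the paper's route pays for itself downstream.
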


\begin{proof}
Let $x = \frac{p}{q}$ and $q$ be positive. Let $\underline{i}(x)=(i_0,i_1...)$. Let $a_k = t(\underline{i}(x)|k)$ be the tag of $\widetilde{J}_{\underline{i}(x)|k}$ as a subset of $\widetilde{J}_{\underline{i}(x)|(k-1)}$, and let $a_0 = i_0 - 1 = L_0(x)$. Let $m_n=|f'(J_{i_n})|=\#\{k: \interior (J_k) \subseteq f(J_{i_n})\}$. By Corollary \ref{regreq} and Theorem \ref{regrDif}: \[
x = L_0(x) + \sum_{k=1}^\infty d_k(x) = \sum_{n=0}^{\infty} \frac{a_n}{\prod_{k=0}^{n-1}m_k}.
\]
We have that $a_0 = \lfloor \frac{p}{q} \rfloor$ and $r_0 = p - qa_0$. By Theorem \ref{regrDif} the following recursion is also true:
\begin{equation} \label{ratrec}
a_{k+1} = \Bigl\lfloor \frac{m_k r_k}{q} \Bigr\rfloor,\quad r_{k+1} = m_k r_k - q a_{k+1}.
\end{equation}

\begin{lemma}
If for some $c,d \in \mathbb{N}$ we have that if $r_c = r_d$ and $i_c = i_d$ and $J_{\underline{i}(x)|c}$ and $J_{\underline{i}(x)|d}$ have the same direction, then $a_{c+1}=a_{d+1},\; r_{c+1}=r_{d+1},\; m_{c+1}=m_{d+1}, \; i_{c+1}=i_{d+1}$ and $J_{\underline{i}(x)|(c+1)}$ and $J_{\underline{i}(x)|(d+1)}$ have the same direction.
\end{lemma}

\begin{proof}
If $i_c = i_d$, then $m_c = m_d$, thus $a_{c+1}=a_{d+1}$ and $r_{c+1}=r_{d+1}$ follow from \eqref{ratrec}. Since $i_c = i_d$ and $a_{c+1}=a_{d+1}$, and we have that $J_{\underline{i}(x)|c}$ and $J_{\underline{i}(x)|d}$ have the same direction, then $i_{c+1}=i_{d+1}$. Hence $J_{\underline{i}(x)|(c+1)}$ and $J_{\underline{i}(x)|(d+1)}$ also have the same direction.
\end{proof}

As a consequence of the previous lemma it follows by induction that if for $c$ and $d$ natural numbers the conditions of the lemma apply, then $i_{c+n} = i_{d+n}$ for any positive integer $n$, thus the orbit is eventually periodic by $|c-d|$. We show that the conditions of the lemma will apply to every rational for some $c$ and $d$, if its orbit has a bounded subsequence.\medskip

If the orbit has a bounded subsequence, then there is some $\alpha \in \mathbb{Z}^+$ such that the orbit has infinitely many elements in $[-\alpha, \alpha]$. In these cases $i_k$ can have only less than $2\alpha$ values. Since $r_k$ is the remainder of an integer modulo $q$ it is an element of the finite set $\{0,...,q-1\}$. There are only two directions an interval can have. By the pigeonhole principle we have that in the first $4\alpha q + 1$ instances, when $f^k(x) \in [-\alpha, \alpha]$ at least once all conditions of the lemma will apply.
\end{proof}

\begin{cor} \label{condpigeon}
If $x\in \tilde{X}, x=\frac{p}{q}$ is not eventually periodic, then for all $n \in \mathbb{Z}^+$ we have that the orbit of $x$ intersects $[-n,n]$ at most $4nq$ times.
\end{cor}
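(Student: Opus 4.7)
The plan is to prove the contrapositive directly, using the pigeonhole counting already carried out inside the proof of Theorem \ref{bondolt}. Specifically, I would assume that the orbit of $x = \nicefrac{p}{q}$ visits $[-n,n]$ at least $4nq + 1$ times and derive that $x$ is eventually periodic, contradicting the hypothesis.

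To each such ``visit time'' $k$, i.e.\ each $k$ with $f^k(x) \in [-n,n]$, I would attach the triple $(r_k, i_k, \varepsilon_k)$, where $r_k$ and $i_k$ are the residue and symbol introduced in the proof of Theorem \ref{bondolt} and $\varepsilon_k$ is the direction (forward or backward) of $\widetilde{J}_{\underline{i}(x)|k}$. Then I would count the possibilities for this triple. The residue $r_k$ lies in $\{0,\dots,q-1\}$, giving $q$ values. Since $x \in \widetilde{X}$, any $f^k(x) \in [-n,n]$ lies in the interior of some $J_m$ with $m \in \{-n+1,\dots,n\}$, so $i_k$ takes at most $2n$ values. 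The direction $\varepsilon_k$ takes $2$ values. Hence the triple admits at most $2 \cdot q \cdot 2n = 4nq$ possibilities in total.

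By the pigeonhole principle, among $4nq + 1$ visit times at least two distinct ones $c < d$ share the same triple. The embedded lemma in the proof of Theorem \ref{bondolt}, applied inductively, then gives $i_{c+m} = i_{d+m}$ for every $m \geq 0$, so the symbolic trajectory of $x$ (and thus $x$ itself) is eventually periodic with period dividing $d-c$. This contradicts the assumption that $x$ is not eventually periodic. The proof is essentially a bookkeeping exercise transplanting the counting already set up in Theorem \ref{bondolt}, so I do not anticipate a real obstacle; the only mild subtlety is justifying that the restriction $f^k(x) \in [-n,n]$ indeed produces exactly $2n$ candidate values for $i_k$, which is immediate from the convention $J_m = [m-1,m]$ combined with $f^k(x) \notin \mathbb{Z}$.
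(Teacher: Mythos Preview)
Your proposal is correct and is essentially the same argument the paper intends: the corollary is stated without its own proof precisely because the pigeonhole counting at the end of the proof of Theorem \ref{bondolt} already gives it, and you have reproduced that counting (the triple $(r_k,i_k,\varepsilon_k)$ with $q\cdot 2n\cdot 2=4nq$ possibilities) verbatim. Your remark that $f^k(x)\in[-n,n]\setminus\mathbb{Z}$ yields exactly $2n$ candidate symbols is in fact slightly sharper than the paper's ``less than $2\alpha$'', but both lead to the same bound $4nq$.
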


\begin{cor} \label{Ergod}
For all $x\in \tilde{X}, x=\frac{p}{q}$, if the orbit of $x$ is not eventually periodic, then for all $N \in \mathbb{Z}^+$ the following inequality holds:
\begin{equation}
\frac{1}{N} \sum_{n=0}^N |f^n(x)| \geq \frac{N}{8q} - \frac{3}{2} + \frac{4q}{N}.
\end{equation}
\end{cor}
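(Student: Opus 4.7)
My plan is to turn the combinatorial bound from Corollary \ref{condpigeon} into a quantitative estimate on $\sum_{n=0}^N |f^n(x)|$ via a Cavalieri (layer-cake) argument. Writing $a_k = |f^k(x)|$ and using $x \in \widetilde{X}$, every orbit point is a positive non-integer rational with denominator dividing $q$, so $a_k \geq \lfloor a_k\rfloor + \frac{1}{q}$. Corollary \ref{condpigeon} states that the orbit of $x$ visits $[-n,n]$ at most $4nq$ times in total for every $n \in \mathbb{Z}^+$, whence
$$\#\{k : 0\le k\le N,\ a_k\le n\} \le 4nq,$$
and taking complements (permissible because $a_k \neq n$) gives $\#\{k : 0\le k\le N,\ a_k\ge n\} \geq (N+1)-4nq$.

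From the identity $\lfloor a\rfloor = \sum_{n\ge 1}\mathbf{1}[a\ge n]$, summing over $k$ and swapping the order of summation yields
$$\sum_{k=0}^{N}a_k \ \geq\ \frac{N+1}{q}\ +\ \sum_{n=1}^{\infty}\max\bigl(0,\,(N+1)-4nq\bigr).$$
The right-hand sum is a truncated arithmetic progression, and I would lower bound it by the integral
$$\int_{1}^{(N+1)/(4q)}\bigl((N+1)-4tq\bigr)\,dt = \frac{(N+1)^{2}}{8q}-(N+1)+2q,$$
which is valid whenever $N+1\ge 4q$. Dividing the resulting estimate by $N$ and simplifying produces the leading term $\frac{N}{8q}$ with explicit lower-order corrections that one compares against $-\frac{3}{2}+\frac{4q}{N}$.

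The main obstacle is the careful bookkeeping of the constants and correction terms: one must track the expansion of $(N+1)^2$ and combine the $\frac{N+1}{q}$ contribution with the integral estimate so that the $-\frac{3}{2}$ and $+\frac{4q}{N}$ on the right emerge from the algebra. I expect the argument to reduce to a polynomial inequality in $N$ and $q$ that must be verified by a short case analysis on whether $\lfloor (N+1)/(4q)\rfloor$ is zero or positive, since in the former regime the pigeonhole count is vacuous and only the cruder bound $\sum a_k \geq (N+1)/q$ is available; the $M\ge 1$ case absorbs into the integral estimate above and the $M=0$ case is the edge case where the sharpness of the stated constants is most delicate.
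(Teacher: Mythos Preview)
Your approach is correct and essentially the same as the paper's: both feed the pigeonhole bound of Corollary~\ref{condpigeon} into a summation to lower-bound $\sum_{n=0}^{N}|f^{n}(x)|$, with the paper phrasing this as an extremal-configuration argument (pack $4q$ points into each annulus $[-n,n]\setminus[-n+1,n-1]$ for $n\le K=\lfloor N/(4q)\rfloor$ and sum the minimal contributions $\sum_{i=0}^{K-1}4qi$) while you phrase it as a layer-cake decomposition, which is just the Fubini rearrangement of the same double sum. Your extra $(N+1)/q$ coming from the fractional parts and your explicit attention to the $M=0$ edge case are refinements the paper does not carry out.
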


\begin{proof}
Let $S_N = \sum_{n=0}^N |f^n(x)|$. If $x$ is not eventually periodic, then by Corollary \ref{condpigeon} we have that the orbit of $x$ cannot enter $[-n,n]$ more than $4nq$ times. Let us construct an orbit, for which $S_N$ could only be made smaller by breaking the former condition.\medskip

Assume that the orbit visits for all $n \in \{1,...,K\}$ the set $[-n,n] \setminus [-n+1,n-1]$ exactly $4q$ times. This way all the sets of the form $[-n,n]$ are visited the maximum number of times allowed by the condition in Corollary \ref{condpigeon}. With these assumptions the smallest possible sum of these elements is:
$$\sum_{i=0}^{K-1} 4qi = 2qK(K-1).$$
Let us set $K = \bigl\lfloor  \frac{N}{4q} \bigr\rfloor$. Then if the condition of Corollary \ref{condpigeon} is satisfied, then: $$S_N \geq 2qK(K-1) =2q \Bigl\lfloor  \frac{N}{4q} \Bigr\rfloor \Big(\Bigl\lfloor  \frac{N}{4q} \Bigr\rfloor - 1\Big) \geq 2q\Big( \frac{N}{4q} - 1 \Big)  \Big(  \frac{N}{4q}  - 2 \Big).$$
Thus the statement follows:
$$\frac{S_N}{N} \geq \frac{N}{8q} - \frac{3}{2} + \frac{4q}{N}.$$
\end{proof}

\begin{theorem} \label{last}
The orbit of every rational $x\in \widetilde{X}$ is eventually periodic if:
\begin{equation} \label{condi}
0 = \liminf_{N\rightarrow \infty} \max_{n\in \{1-N, ... ,N\}} \lambda \Big(f^{-1} \big( (-\infty, -N] \cup [N, \infty) \big) \cap J_n \Big).
\end{equation}
We refer to \eqref{condi} as the bottleneck condition.
\end{theorem}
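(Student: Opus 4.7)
I would argue by contradiction: suppose $x = p/q \in \widetilde{X}$ with $q \geq 2$ (integers are excluded from $\widetilde{X}$) has an orbit that is not eventually periodic. Theorem \ref{bondolt} then forces $|f^k(x)| \to \infty$, so the orbit is unbounded. The strategy is to use the bottleneck condition to show the opposite: that the orbit is trapped in some bounded window $\{1-N,\ldots,N\}$ forever, producing a contradiction.

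Rationality enters through the Cantor-series recursion \eqref{ratrec}. If some $r_k$ were zero, then $x = L_k(x)$ would be a rational boundary point excluded from $\widetilde{X}$, so in fact $r_k \in \{1,\ldots,q-1\}$ for all $k$. Therefore the tag
$$ a_{k+1} = \left\lfloor \frac{m_{i_k}\, r_k}{q} \right\rfloor $$
is pinned to the middle of the tag space $\{0,1,\ldots,m_{i_k}-1\}$: direct estimates give $a_{k+1} \geq \lfloor m_{i_k}/q \rfloor$ and $m_{i_k}-1-a_{k+1} \geq m_{i_k}/q - 1$, so the tag is bounded away from both ends by roughly $m_{i_k}/q$. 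On the other side, Lemma \ref{Rendszer3} tells us that $f(J_n)$ is a consecutive block $J_\mu,\ldots,J_\nu$ of integer intervals, so the ``escape'' indices (those outside $\{1-N,\ldots,N\}$) form a prefix and/or a suffix of $\{\mu,\ldots,\nu\}$; under the tag-to-index bijection (forward or backward makes no difference for the absolute counts), these escape tags form a set of the shape $\{0,\ldots,A-1\}\cup\{m_{i_k}-B,\ldots,m_{i_k}-1\}$ with $A+B \leq \epsilon \cdot m_{i_k}$, where $\epsilon$ is the bottleneck parameter at level $N$.

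To close the argument I would extract from the bottleneck condition a subsequence $N_j \to \infty$ with $\epsilon_j \to 0$, pick $j$ large enough that $\epsilon_j < 1/(2q)$ and $N_j > |x|+1$ (so that $i_0 \in \{1-N_j,\ldots,N_j\}$), and prove by induction that $i_k \in \{1-N_j,\ldots,N_j\}$ for every $k$. Two cases arise: if $m_{i_k} < 2q$, then the escape count is at most $\epsilon_j m_{i_k} < 1$, hence zero by integrality; if $m_{i_k} \geq 2q$, then $\lfloor m_{i_k}/q \rfloor \geq m_{i_k}/(2q) \geq A$ and symmetrically $m_{i_k}-1-a_{k+1} \geq m_{i_k}/(2q) \geq B$, so the tag lies in the non-escape block. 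Either way $i_{k+1}$ stays in the window, the orbit is bounded, and this contradicts $|f^k(x)|\to\infty$. The main obstacle is the quantitative matching between the tag's $1/q$-scale location inside the tag space and the bottleneck scale $\epsilon_j$; it is handled by the case split on $m_{i_k}$ --- small slopes give no escape by integrality, large slopes give enough room on each side of the tag for the middle block to absorb both escape prefixes at once.
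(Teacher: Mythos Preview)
Your argument is correct and takes a genuinely different route from the paper's.

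The paper also reduces via Theorem~\ref{bondolt} to the case $|f^k(x)|\to\infty$, but then argues through a Diophantine limit-point mechanism: it introduces the sequence $\ell_{n_0}/q\in(0,1)$ recording the position of $x$ inside the cylinder $J_{\underline{i}(x)|(n_0-1)}$, observes that these are rationals with fixed denominator $q$ and hence cannot accumulate at $0$ or $1$ without equalling them, and then uses the bottleneck to show that at each boundary crossing the tag ratio $a_{k+1}/m_k$ is forced near $0$ or $1$, dragging $\ell_{n_0}/q$ toward $0$ or $1$ along a subsequence --- a contradiction. This is essentially an application (implicitly reproved) of Oppenheim's irrationality criteria for Cantor series.

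Your approach bypasses the limit-point argument entirely: you exploit $r_k\in\{1,\dots,q-1\}$ to pin the tag $a_{k+1}$ into the middle of $\{0,\dots,m_k-1\}$ with a margin of order $m_k/q$ on each side, and then match this quantitatively against the escape-zone width $\epsilon_j m_k$ via the threshold $\epsilon_j<1/(2q)$. The case split on $m_k\gtrless 2q$ is the clean device that makes the integrality work out. This gives a direct trapping window for the orbit rather than an asymptotic contradiction.

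What each buys: the paper's route makes visible the link to Cantor-series irrationality and the Oppenheim theorems cited afterward, which is conceptually illuminating; your route is more elementary and self-contained, yields an explicit bounded window $[-N_j,N_j]$ confining the orbit, and avoids any subsequence extraction. Both rest on the same recursion~\eqref{ratrec} and the same structural fact that escape tags form a prefix--suffix pair.
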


\begin{rem}
Let us decipher what does \eqref{condi} tell us. It basically describes that stepping out of the bounded region $[-N, N]$ becomes more and more difficult the bigger $N$ we choose. We know from Theorem \ref{bondolt} that all bounded rational orbits are eventually periodic. We show that with the bottleneck condition satisfied, an unbounded orbit has to approximate the orbit of 1 or 0 so well, that it can only be a rational orbit if it is equal to the orbit of 1 or 0.
\end{rem}

\begin{proof}
Let us consider a rational \[x = \frac{p}{q} = \sum_{n=0}^{\infty} \frac{a_n}{\prod_{k=0}^{n-1}m_k}.\]
By Theorem \ref{bondolt} it is enough to consider the case when $|f^k(x)| \rightarrow \infty$. Let us define the following: \[ \ell_{n_0} := \Big( p - q \sum_{n=0}^{n_0-1} \frac{a_n}{\prod_{k=0}^{n-1}m_k} \Big) \prod_{k=0}^{n_0 - 2} m_k.
\]
By using this notation we can define the following sequence:
\begin{equation} \label{rattalan}
\frac{\ell_{n_0}}{q} = \frac{a_{n_0}}{m_{n_0-1}} + \frac{1}{m_{n_0-1}}\sum_{n=n_0+1}^{\infty} \frac{a_n}{\prod_{k=n_0}^{n-1}m_k}.
\end{equation}

Since $a_n$ is an element of $\{0,...,m_{n-1}-1\}$ if $n>0$, it is true that $\frac{a_n}{m_{n-1}} \in [0,1]$ if $n>0$.

\begin{lemma} \label{position}
Suppose that we have $x \in \widetilde{X}$, then for all $n_0 \in \mathbb{N}$: \[0 < \sum_{n=n_0+1}^{\infty} \frac{a_n}{\prod_{k=n_0}^{n-1}m_k} < 1.\]
\end{lemma}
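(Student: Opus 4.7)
The plan is to recognize the tail sum as a rescaled version of the distance from $x$ to the left endpoint of the $n_0$-th cylinder $\widetilde{J}_{\underline{i}(x)|n_0}$, and then read off the two strict inequalities from the fact that $x\in \widetilde{X}$ sits strictly inside that cylinder.

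First I would factor the common prefix out of the denominators: since $\prod_{k=n_0}^{n-1} m_k = M_n / M_{n_0}$, Theorem \ref{regrDif} gives
$$\sum_{n=n_0+1}^{\infty} \frac{a_n}{\prod_{k=n_0}^{n-1} m_k} \;=\; M_{n_0}\sum_{n=n_0+1}^{\infty} \frac{a_n}{M_n} \;=\; M_{n_0}\sum_{n=n_0+1}^{\infty} d_n(x).$$
By Definition \ref{dif_ferenc} the partial sums telescope, $\sum_{n=n_0+1}^{N} d_n(x) = L_N(x) - L_{n_0}(x)$, and Corollary \ref{regreq} lets me pass to the limit, so the tail equals $M_{n_0}\bigl(x - L_{n_0}(x)\bigr)$.

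Second, I would bound this quantity by the geometry of the cylinder. Because $D$ is countable, $\widetilde{X}$ is dense in $X$, hence $\overline{\widetilde{J}_{\underline{i}(x)|n_0}} = J_{\underline{i}(x)|n_0}$ is a closed interval whose endpoints are exactly $L_{n_0}(x)$ and $U_{n_0}(x)$. Iterating item 5 of Lemma \ref{Rendszer3} yields length $U_{n_0}(x) - L_{n_0}(x) = 1/M_{n_0}$. Both endpoints are iterated preimages of integer boundaries, so they lie in $D$; but $x \in \widetilde{X}$ means $x \notin D$, so the inclusion $x \in \widetilde{J}_{\underline{i}(x)|n_0}$ is strict on both sides: $L_{n_0}(x) < x < U_{n_0}(x)$. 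Multiplying by $M_{n_0}$ gives $0 < M_{n_0}\bigl(x - L_{n_0}(x)\bigr) < 1$, which is precisely the required inequality.

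There is no serious obstacle; the only point that deserves care is making the inequalities strict, which is exactly where the hypothesis $x \in \widetilde{X}$ (and hence $x \notin D$) is consumed. Everything else is a direct rearrangement of the Cantor-series representation already supplied by Theorem \ref{regrDif} and Corollary \ref{regreq}.
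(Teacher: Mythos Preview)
Your proof is correct and follows essentially the same route as the paper: both identify the tail sum with $M_{n_0}\bigl(x - L_{n_0}(x)\bigr)$ and then read off the strict inequalities from the fact that $x\in\widetilde{X}$ lies strictly inside the cylinder $J_{\underline{i}(x)|n_0}$, which has length $1/M_{n_0}$. The paper packages the final step as ``$\phi(y)=(y-L_{n_0}(x))M_{n_0}$ maps the open cylinder onto $(0,1)$,'' but this is exactly your multiplication argument.
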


\begin{proof}
Since $x = L_{n_0}(x) + \sum_{n=n_0+1}^{\infty} \frac{a_n}{\prod_{k=0}^{n-1}m_k}$ we have that:\[
(x - L_{n_0}(x))\prod_{k=0}^{n_0-1}m_k = \sum_{n=n_0+1}^{\infty} \frac{a_n}{\prod_{k=n_0}^{n-1}m_k}.
\]
We also know as a consequence of Lemma \ref{Rendszer3}, that $\prod_{k=0}^{n_0-1} \frac{1}{m_k} = \lambda(J_{\underline{i}(x)|n_0})$. This implies that the function $\phi(y) := (y - L_{n_0}(x))\prod_{k=0}^{n_0-1}m_k$ is the linear map which maps the two endpoints of $\interior (J_{\underline{i}(x)|n_0})$ onto the two endpoints of $(0,1)$ while preserving direction. The value of the sequence at $n_0$ is $\phi(x)$, and if $x \in \widetilde{X}$, then $x \in \interior (J_{\underline{i}(x)|n_0})$, thus $\phi(x) \in (0,1)$.
\end{proof}

\begin{cor} \label{nono}
For any $x \in \widetilde{X}$ we have that $0 < \frac{\ell_{n_0}}{q} < 1$ for all $n_0 \in \mathbb{Z}^+$.
\end{cor}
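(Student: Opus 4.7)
The plan is to read $\ell_{n_0}/q$ off directly from \eqref{rattalan} and bound its two summands using facts already established in the paper. Writing
\[
\frac{\ell_{n_0}}{q} = \frac{a_{n_0} + S_{n_0}}{m_{n_0-1}}, \qquad S_{n_0} := \sum_{n=n_0+1}^{\infty} \frac{a_n}{\prod_{k=n_0}^{n-1} m_k},
\]
I would combine two ingredients already in hand. The first is the digit range $a_{n_0} \in \{0, \ldots, m_{n_0-1}-1\}$, recorded just above Lemma \ref{position} (and originating in Theorem \ref{regrDif}). The second is the strict tail estimate $0 < S_{n_0} < 1$ coming from Lemma \ref{position} applied at index $n_0$.

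For the lower bound, even in the extreme case $a_{n_0} = 0$ the strict positivity $S_{n_0} > 0$ keeps the numerator positive, so $\ell_{n_0}/q > 0$; note that this is precisely the step that consumes the hypothesis $x \in \widetilde{X}$, since in Lemma \ref{position} the strict inequality $S_{n_0} > 0$ rests on the fact that $x$ lies in the open interior of $J_{\underline{i}(x)|n_0}$. For the upper bound, even the worst case $a_{n_0} = m_{n_0-1}-1$ is controlled by the strict estimate $S_{n_0} < 1$, giving $a_{n_0} + S_{n_0} < m_{n_0-1}$ and hence $\ell_{n_0}/q < 1$ after dividing by the positive integer $m_{n_0-1}$.

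I do not expect any genuine obstacle. The statement is essentially a repackaging of Lemma \ref{position}: once \eqref{rattalan} is in hand, the bounds fall out by inserting the digit bounds on $a_{n_0}$ into the tail bounds on $S_{n_0}$. The only subtle point worth flagging in the write-up is that \emph{both} strict inequalities of Lemma \ref{position} are genuinely used --- one for each endpoint --- so the corollary transparently inherits its dependence on the assumption $x \in \widetilde{X}$ from that lemma.
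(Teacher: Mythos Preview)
Your argument is correct and essentially matches the paper's. The only cosmetic difference is that the paper observes $\ell_{n_0}/q = \sum_{n=n_0}^{\infty} a_n/\prod_{k=n_0-1}^{n-1} m_k$ and applies Lemma~\ref{position} with the index shifted down by one, whereas you split off the leading digit $a_{n_0}$ and apply Lemma~\ref{position} at the same index together with the digit bound $0 \le a_{n_0} \le m_{n_0-1}-1$; these are two ways of writing the same estimate.
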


\begin{proof}
The sequence has the following form:\[
\frac{\ell_{n_0}}{q} = \sum_{n=n_0}^{\infty} \frac{a_n}{\prod_{k=n_0-1}^{n-1}m_k} = (x - L_{n_0-1}(x))\prod_{k=0}^{n_0-2}m_k.
\]
By a change of indices, it follows from Lemma \ref{position} that $0 < \frac{\ell_{n_0}}{q} < 1$. It also follows that $\frac{\ell_{n_0}}{q}$  describes the position of $x$ inside of $\widetilde{J}_{\underline{i}(x)|n_0-1}$.
\end{proof}

Let us assume that the set of numbers which are in the sequence $\frac{\ell_{n_0}}{q}$ has a rational limit point. Let this be $\frac{c}{d}$ for $c \in \mathbb{Z}$ and $d \in \mathbb{Z}^+$. In this case for infinitely many $n_0$:
\[
0 < \Big| \frac{\ell_{n_0}}{q} - \frac{c}{d} \Big| < \frac{1}{qd} .
\]
This is a contradiction, since $| \frac{\ell_{n_0}}{q} - \frac{c}{d} | = \frac{|\ell_{n_0}d - cq|}{qd}$ so if it is non-zero, then it is at least $\frac{1}{qd}$. We will show that if the orbit of $x$ is not eventually an integer, meaning that $x\in \widetilde{X}$, and $x$ is rational, then if $\liminf_{k \rightarrow \infty} |f^k(x)| = \infty$, then the set of numbers in the sequence $\frac{\ell_{n_0}}{q}$ has a rational limit point, and thus we have a contradiction. This will prove the proposition.\medskip

For $x\in \widetilde{X} \cap \mathbb{Q}$ proceeding towards a contradiction we assume that $|f^k(x)|$ tends to infinity. Since the orbit of $x$ is unbounded, we have that for every integer $N > |x|$ there is a non-negative integer $k$ for which $f^k(x)\in [-N, N]$ and $f^{k+1}(x) \notin [-N,N]$.\medskip

For a fixed $N$, for this $k$ let us consider the nonempty sets $\widetilde{J}_{i_k h}$, where $h \in \mathbb{Z}$. Let us define the following set: $$H = \{h \in \mathbb{Z}: f(\widetilde{J}_{i_k h}) \cap ((-\infty, N] \cup [N,\infty)) \neq \emptyset \}.$$ 
We have that the set $t(\{i_kh: h \in H\})$ is either $\{0,...,c\}$ for some $c \in \mathbb{N}$ or it is $\{c,...,m_k-1\}$ for some $c \in \mathbb{N}, c\leq m_k -1$ or it is $\{0,...,c\} \cup \{d,...,m_k-1\}$ for some $c,d \in \mathbb{N}, 0\leq c<d \leq m_k-1$. This is a consequence of that the function $f$ is monotone on $\widetilde{J}_{i_k}$. This implies that for a fixed $N > 0$ we have that: 
\begin{equation} \label{limit1}
\min \Big(\frac{a_{k+1}}{m_{k}}, 1 - \frac{a_{k+1}}{m_{k}} \Big) < \max_{n\in \{1-N, ... ,N\}} \lambda (f^{-1}( (-\infty, -N] \cup [N, \infty)) \cap J_n).
\end{equation}
By \eqref{limit1} the sequence $\frac{a_{n_0}}{m_{n_0-1}}$ has a subsequence which tends to zero or one. If the right hand side in \eqref{limit1} is $\varepsilon > 0$ for $n_0 = k$ this implies that $0 < \frac{1}{m_{n_0}} < \varepsilon$. This combined with Lemma \ref{position} implies $0 \leq \frac{1}{m_{n_0-1}}\sum_{n=n_0+1}^{\infty} \frac{a_n}{\prod_{i=n_0}^{n-1}m_i} < \varepsilon$. By \eqref{rattalan} it follows that $\frac{\ell_{n_0}}{q}$ has a subsequence which tends to zero or one. We also have by Corollary \ref{nono}, that $0 < \frac{\ell_{n_0}}{q} < 1$. Hence the set of numbers in the sequence has zero or one as a limit point, which is rational, thus we have a contradiction. Hence for all $x \in \widetilde{X} \cap \mathbb{Q}$ we have that $\liminf_{k\rightarrow \infty} |f^k(x)| < \infty$, thus by Theorem \ref{bondolt} the statement follows.
\end{proof}

The work \cite{Oppenheim} on Cantor series should be mentioned here, as Theorems 3 and 4 of that paper are implicitly proven and then applied in the previous proof. The theorems are on the rationality of a Cantor series of the following form:
\begin{equation}
x = a_0 + \sum_{i=1}^\infty \frac{a_i}{\prod_{j=1}^i b_i}.
\end{equation}
Where the following are satisfied for all $i \in \mathbb{N}^+$:
\begin{equation} \label{condopp}
0 \leq a_i \leq b_i - 1, \quad b_i \geq 2.
\end{equation}

\begin{theorem}(Theorem 3. of \cite{Oppenheim})
If condition \eqref{condopp} is satisfied, and $a_i < b_i - 1$ infinitely often and there exists a subsequence $a_{i_n}, b_{i_n}$ of $a_n, b_n$ such that $\nicefrac{a_{i_n}}{b_{i_n}} \to 1$ as $n \to \infty$, then $x$ must be irrational.
\end{theorem}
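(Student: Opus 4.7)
The plan is to adapt the argument from the proof of Theorem \ref{last} to this purely arithmetic setting; indeed that dynamical proof already contained essentially this statement as its engine. Suppose for contradiction that $x = p/q$ with $q \in \mathbb{Z}^+$, and define the auxiliary sequence
\[
\ell_{n_0} \;:=\; \Big(p - q a_0 - q \sum_{n=1}^{n_0} \frac{a_n}{\prod_{k=1}^{n} b_k}\Big) \prod_{k=1}^{n_0} b_k.
\]
Distributing the outer product shows that $\ell_{n_0} \in \mathbb{Z}$, and substituting $x = p/q$ into the Cantor series identity yields the tail formula
\[
\frac{\ell_{n_0}}{q} \;=\; \sum_{i=n_0+1}^{\infty} \frac{a_i}{\prod_{k=n_0+1}^{i} b_k}.
\]

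Next I would establish the analogue of Corollary \ref{nono}: $0 < \ell_{n_0}/q < 1$ for every $n_0 \geq 0$. The upper bound follows from the telescoping identity
\[
\sum_{i=n_0+1}^{\infty} \frac{b_i - 1}{\prod_{k=n_0+1}^{i} b_k} \;=\; 1
\]
(a short induction using $b_i \geq 2$), combined with the hypothesis that $a_i < b_i - 1$ infinitely often, which forces the inequality to be strict. For the lower bound, the assumption $a_{i_n}/b_{i_n} \to 1$ forces $a_{i_n} \geq 1$ for all sufficiently large $n$, so for every $n_0$ the tail contains a positive term and $\ell_{n_0}/q > 0$.

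Hence $\ell_{n_0}$ is an integer strictly between $0$ and $q$, so $\ell_{n_0}/q$ lies in the finite set $\{1/q, 2/q, \ldots, (q-1)/q\}$, whose supremum is $(q-1)/q < 1$. On the other hand, separating the first term of the tail gives
\[
\frac{\ell_{n_0 - 1}}{q} \;=\; \frac{a_{n_0}}{b_{n_0}} + \frac{1}{b_{n_0}} \cdot \frac{\ell_{n_0}}{q} \;\geq\; \frac{a_{n_0}}{b_{n_0}}.
\]
Specializing to $n_0 = i_n$ and letting $n \to \infty$ gives $\ell_{i_n - 1}/q \geq a_{i_n}/b_{i_n} \to 1$, contradicting the uniform bound $\ell_{i_n - 1}/q \leq (q-1)/q$.

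The main obstacle is really only bookkeeping: verifying that the two non-degeneracy hypotheses play genuinely distinct roles. The condition "$a_i < b_i - 1$ infinitely often" is precisely what keeps $\ell_{n_0}/q$ bounded strictly below $1$, while the subsequence condition $a_{i_n}/b_{i_n} \to 1$ is precisely what pushes $\ell_{i_n - 1}/q$ up toward $1$. Each hypothesis is needed on one side of the contradiction, and dropping either produces rational Cantor series (eventually equal to $0$, respectively eventually equal to $b_i - 1$).
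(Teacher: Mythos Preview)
Your proposal is correct and follows essentially the same approach as the paper. The paper does not give a standalone proof of this Oppenheim theorem; it only remarks that the argument is ``implicitly proven'' inside the proof of Theorem \ref{last}, and what you have done is precisely to isolate that arithmetic core: the integrality of $\ell_{n_0}$ forces $\ell_{n_0}/q$ into the finite set $\{1/q,\dots,(q-1)/q\}$, while the recursion $\ell_{n_0-1}/q = a_{n_0}/b_{n_0} + (1/b_{n_0})\,\ell_{n_0}/q$ together with $a_{i_n}/b_{i_n}\to 1$ pushes a subsequence of $\ell_{n_0}/q$ toward $1$, giving the contradiction. The only cosmetic difference is that the paper phrases the endgame as ``the sequence $\ell_{n_0}/q$ cannot have a rational limit point outside its own values,'' whereas you state directly that $\ell_{n_0}/q \le (q-1)/q$; these are the same observation.
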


\begin{theorem}(Theorem 4. of \cite{Oppenheim})
If condition \eqref{condopp} is satisfied, and $a_i > 0$ infinitely often and there exists a subsequence $a_{i_n}, b_{i_n}$ of $a_n, b_n$ such that $\nicefrac{a_{i_n}}{b_{i_n}} \to 0$ and $b_{i_n} \to \infty$ as $n \to \infty$, then $x$ must be irrational.
\end{theorem}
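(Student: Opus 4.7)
The plan is to transfer the tail-integer argument from the proof of Theorem \ref{last} to the pure Cantor-series setting. I would assume for contradiction that $x = p/q$ for some $p \in \mathbb{Z}$ and $q \in \mathbb{Z}^+$, and construct a sequence of non-negative integers $\ell_n$ such that the hypotheses force $\ell_{i_n} = 0$ for large $n$ along the given subsequence. This will contradict the assumption $a_i > 0$ infinitely often.

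Concretely, I would define
$$\ell_n := \Big( p - q\Big(a_0 + \sum_{i=1}^{n-1} \frac{a_i}{\prod_{j=1}^i b_j}\Big) \Big) \prod_{j=1}^{n-1} b_j,$$
so that $\ell_n/q = \sum_{i=n}^\infty a_i / \prod_{j=n}^i b_j$. The same divisibility observation as in the preceding proof --- namely $\prod_{j=1}^i b_j$ divides $\prod_{j=1}^{n-1} b_j$ for every $i \leq n-1$ --- shows that $\ell_n \in \mathbb{Z}$, and non-negativity of the Cantor tail gives $\ell_n \geq 0$.

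Next I would split the tail as
$$\frac{\ell_n}{q} = \frac{a_n}{b_n} + \frac{1}{b_n} \sum_{i=n+1}^\infty \frac{a_i}{\prod_{j=n+1}^i b_j}.$$
Under the hypothesis $0 \leq a_i \leq b_i - 1$, the inner sum is bounded above by the telescoping series $\sum_{i=n+1}^\infty \bigl( 1/\prod_{j=n+1}^{i-1} b_j - 1/\prod_{j=n+1}^i b_j \bigr) = 1$. This gives $0 \leq \ell_n/q \leq (a_n+1)/b_n$ for every $n \geq 1$. Restricting to the subsequence $\{i_n\}$ from the hypothesis, both $a_{i_n}/b_{i_n}$ and $1/b_{i_n}$ tend to $0$, hence $\ell_{i_n}/q \to 0$. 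Since $\ell_{i_n}$ is a non-negative integer and $q$ is a fixed positive integer, this forces $\ell_{i_n} = 0$ for all sufficiently large $n$.

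Finally, $\ell_{i_n} = 0$ means the tail of the Cantor series starting at index $i_n$ vanishes, so $a_i = 0$ for all $i \geq i_n$, contradicting the hypothesis that $a_i > 0$ infinitely often. The main delicate point is the telescoping bound on the inner sum, and it is worth emphasizing why \emph{both} parts of the hypothesis get used: the assumption $b_{i_n} \to \infty$ is exactly what makes the $1/b_{i_n}$ term in the upper bound vanish, while $a_{i_n}/b_{i_n} \to 0$ handles the first term. If either condition were dropped, the bound on $\ell_{i_n}/q$ would not fall below $1/q$ and the integer-forcing step would collapse.
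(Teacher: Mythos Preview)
Your proof is correct and is essentially the adaptation of the paper's own argument that the paper itself advertises: the integers $\ell_n$, the tail splitting, and the bound on the inner sum are exactly the ingredients used inside the proof of Theorem~\ref{last}, and the paper explicitly notes that Theorems~3 and~4 of \cite{Oppenheim} ``are implicitly proven and then applied'' there. The only organizational difference is that the paper reaches the contradiction via the Diophantine step ``a rational limit point of $\ell_n/q$ that is never attained is impossible'' (using the strict inequality $0<\ell_{n_0}/q$ coming from $x\in\widetilde X$), whereas you allow $\ell_{i_n}=0$ and then invoke the hypothesis $a_i>0$ infinitely often to rule it out; these are two phrasings of the same observation, since ``$a_i>0$ infinitely often'' is precisely what makes every tail strictly positive.
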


\begin{cor}
In the system $(\mathbb{R}_{\geq 0}, T)$ all the eventually periodic points are rational and for all $x \in \mathbb{Q}_{\geq 0}$ the orbit of $x$ is either eventually periodic or eventually reaches an integer.
\end{cor}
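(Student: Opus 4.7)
The statement splits into two assertions, which I would treat separately, reducing each to a result already established in the excerpt.

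\textbf{Eventually periodic implies rational.} I would split based on whether the orbit ever hits an integer. If $x \in \widetilde{\mathbb{R}}_{\geq 0}$, this is immediate from Theorem \ref{periodic_is_rat}. If instead $x \in D$, then $T^{n_0}(x) \in \mathbb{Z}_{\geq 0}$ for some $n_0$. Direct inspection of $T$ shows every even integer is mapped to $0$ and every odd integer $n$ with $4^{\ell - 1} \leq n < 4^\ell$ is mapped to $4^\ell + 1 > n$, so integer orbits are eventually periodic only when they reach the fixed point $0$. Hence $T^k(x)=0$ for some $k\geq n_0$, i.e.\ $x \in T^{-k}(\{0\})$. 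Each branch of $T$ is a linear function with rational slope and intercept (slopes $\pm(4^{\ell(n)}+1)$), so preimages of rationals under $T$ are rational, and $x \in \mathbb{Q}$.

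\textbf{Rational implies eventually periodic or eventually integer.} My plan here is to apply Theorem \ref{last} to $(\mathbb{R}_{\geq 0},T)$. First I would verify that $(\mathbb{R}_{\geq 0},T)$ satisfies Definition \ref{z1}: the $\ell$-Markov partition part is already proven, and the second requirement (that each $T(I_n)$ covers at least two partition intervals) is automatic since $T(I_n) = [0,4^{\ell(n)}+1]$ with $\ell(n)\geq 1$, giving at least five partition intervals.

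The interesting step is the bottleneck condition \eqref{condi}. Since $X=\mathbb{R}_{\geq 0}$ only positive $n$ contribute. I would evaluate the maximum along the subsequence $N=4^M$. For $1\leq n \leq 4^M$ the set $T^{-1}([N,\infty))\cap I_n$ is nonempty only when $4^{\ell(n)}+1\geq N$, which forces $\ell(n)=M$ exactly, i.e.\ $4^{M-1}<n\leq 4^M$. On such $I_n$ the map $T$ is linear with slope $\pm(4^M+1)$ and sends $I_n$ onto $[0,4^M+1]$, so the preimage of $[4^M,\infty)$ inside $I_n$ is an interval of length $1/(4^M+1)$. Taking the max over $n$ yields $1/(4^M+1)\to 0$, so the liminf equals zero. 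With both hypotheses verified, Theorem \ref{last} guarantees that every rational $x\in\widetilde{\mathbb{R}}_{\geq 0}$ has an eventually periodic orbit, and rationals in $D$ eventually reach an integer by definition of $D$; this covers all of $\mathbb{Q}_{\geq 0}$.

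The main thing to notice (rather than a genuine obstacle) is that the bottleneck is sharpest precisely at the jumps of $\ell$, so choosing $N=4^M$ is the natural subsequence; once this is spotted the estimate is immediate. The only other point requiring care is handling points in $D$ in both assertions, which one may otherwise overlook since most of Section \ref{orbit of rationals} is set in $\widetilde{X}$.
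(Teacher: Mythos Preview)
Your proposal is correct and follows essentially the same route as the paper: cite Theorem \ref{periodic_is_rat} for the first assertion, and verify the bottleneck condition \eqref{condi} along the subsequence $N=4^M$ (yielding the maximum $1/(4^M+1)$) to invoke Theorem \ref{last} for the second. Your treatment is in fact more careful than the paper's short proof in that you explicitly handle points in $D$ for both assertions, whereas the paper simply cites Theorem \ref{periodic_is_rat} (stated only for $\widetilde{X}$) without commenting on this gap.
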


\begin{proof}
Since Theorem \ref{periodic_is_rat} applies, we have that all the eventually periodic points are rational. If $N_k = 4^k$, then: $$\max_{n\in \{1-N_k, ... ,N_k\}} \lambda \Big(f^{-1} \big( (-\infty, -N_k] \cup [N_k, \infty) \big) \cap J_n \Big) = \frac{1}{4^k + 1}.$$ Thus Theorem \ref{last} applies.
\end{proof}

\subsection{Example systems} \label{example_systems}

An interesting application of $\ell$-Markov partitions is that one may associate a Markov chain to systems possessing this property. Similar ideas are present in literature as well, we cite Theorem 9.2.1 of \cite{Boyarsky}. We explore these ideas in more detail in our paper in preparation \cite{mineOfficial}. We introduce some necessary definitions.\medskip

Let us consider a time-homogeneous Markov chain on a state space $I$. Let $p_{ij}$ be the transition probabilities of the Markov chain, where $i,j \in I$. 
\begin{defin}
The following are parameters of the Markov chain:
\begin{enumerate}
\item $f_{ij}^{(n)} = \sum_{s \in S} p_{is_1}p_{s_{n-1}j}\prod_{k=2}^{n-1}p_{s_{k-1}s_k}$ where $s = (s_1, ..., s_{n-1})$ and $S$ is the set of $n-1$ long positive integer sequences not containing $j$. This is the probability of starting from $i$ and taking a path arriving at $j$ for the first time after exactly $n$ steps.
\item $f_{ij}^* = \sum_{n=1}^{\infty} f_{ij}^{(n)}$ the probability of eventually getting to $j$ if we start from $i$.
\item If $f_{ii}^* =1$, then we define $m_i = \sum_{n=1}^{\infty} nf_{ii}^{(n)}$, otherwise we define $m_i = \infty$. This is the average return time to $i$ if we start from it.
\end{enumerate}
\end{defin}
\begin{defin}
We call a state of a Markov chain transient, if $f_{ii}^* <1$. We call a state null-recurrent, if $f_{ii}^* = 1$ and $m_i = \infty$ and positive-recurrent if $f_{ii}^* = 1$ and $m_i < \infty$.
\end{defin}
\begin{defin}[Associated Markov chain]
Let $(X,f)$ be a dynamical system with an $\ell$-Markov partition $\mathscr{H}$. The associated Markov chain to this system is a time-homogeneous Markov chain on a state space $I$ with the same cardinality as $\mathscr{H}$, and transition probabilities $$p_{ij} = \frac{\lambda(f^{-1}(H_j) \cap H_i)}{\lambda(H_i)}.$$ 
\end{defin}

At a first glance it is not obvious how does the eventual periodicity of rational points and the bottleneck condition relate to the transience and recurrence properties of the Markov chain described by a given system. From statements like Corollary \ref{Ergod} it seems possible that there might be a connection between the mentioned properties. The following four examples show that there is no particular connection between them.\medskip 

In some cases rationals have an orbit which is not eventually periodic, thus the symbolic trajectory $i_n$ is not eventually periodic, however the sequence $a_n$ of tags describing the Cantor series of the given point is still always eventually periodic. Example \ref{kek} is such a system. To account for this possibility Example \ref{posrecEscapes} and \ref{extransi} are systems, where $a_n = i_n-1$ always for any orbit. The properties of each example given are listed below:

\begin{itemize}
  \item[\ref{zold}]  Theorem \ref{last} applies, however its Markov chain has only transient states.
  \item[\ref{kek}] Has rational orbits which are not eventually periodic and do not hit integers, however every state is positive recurrent.
  \item[\ref{posrecEscapes}] For any orbit $a_n = i_n-1$, has rational orbits which are not eventually periodic and do not hit integers, however every state is positive recurrent.
  \item[\ref{extransi}] For any orbit $a_n = i_n-1$, Theorem \ref{last} applies, however its Markov chain has only transient states.
\end{itemize}

\begin{exam} \label{zold}
We define a function $f: \mathbb{R}_{\geq 0} \rightarrow \mathbb{R}_{\geq 0}$ for which Theorem \ref{last} applies however it has only transient states. Let us define a sequence $\{n_k\}_{k=1}^\infty \in (\mathbb{Z}^+)^\mathbb{N}$, which tends to infinity and let $s_k = \sum_{i=1}^{k-1} n_i$. If $s_k < n \leq s_{k+1}$, then let $f|_{\interior (I_n)}(x) = (n_k+1)x + s_k$. The $f$ images of integers does not matter. For an example of such a system see the green graph on Figure \ref{vonalak}. For the sequence $s_k$:
$$\max_{n\in \{1-s_k, ... ,s_k\}} \lambda \Big(f^{-1} \big( (-\infty, -s_k] \cup [s_k, \infty) \big) \cap J_n \Big) = \frac{1}{n_{k-1}}.$$
Thus Theorem \ref{last} applies, therefore for this system all eventually periodic points are rational and all rationals either hit an integer or are eventually periodic. The Markov chain defined by the system has no recurrent state, since for a state $s_k < n \leq s_{k+1}$ we eventually step above $s_{k+1}$ with probability one, however we cannot step under $s_k$. 
\end{exam}
\begin{exam} \label{kek}
There are positive recurrent Markov chains, which can be described by a function satisfying the assumptions of Definition \ref{z1} and still do not satisfy the bottleneck condition. Take the random walk on $\mathbb{Z}^+$ where from $n \geq 3$ we step on $n+1, n, n-1$ or $n-2$ with the probability of each being $\nicefrac{1}{4}$, and from one or two we step onto an element of $\{1,2,3,4\}$ with the probability of each being $\nicefrac{1}{4}$. Take a function which defines this Markov chain.
\begin{equation}
f(x) =
   \begin{cases} 
      4(x-\lfloor x \rfloor) & \text{if } 0 \leq x < 3, \\
      4x - 3\lfloor x \rfloor -2 & \text{if } 3 \leq x.
   \end{cases}
\end{equation}
For a plot of this function see Figure \ref{vonalak}. The set of integer intervals $\{J_k \}_{k=1}^{\infty}$ forms an expansive $\ell$-Markov partition of the domain of this function. The bottleneck condition does not apply since: $$\max_{n\in \{1-N, ... ,N\}} \lambda \Big(f^{-1} \big( (-\infty, -N] \cup [N, \infty) \big) \cap J_n \Big) \geq \lambda \Big(f^{-1} \big( (-\infty, -N] \cup [N, \infty) \big) \cap J_N \Big) \geq \frac{1}{4}.$$
\end{exam}

\begin{figure}[h]
\includegraphics[width= 14cm]{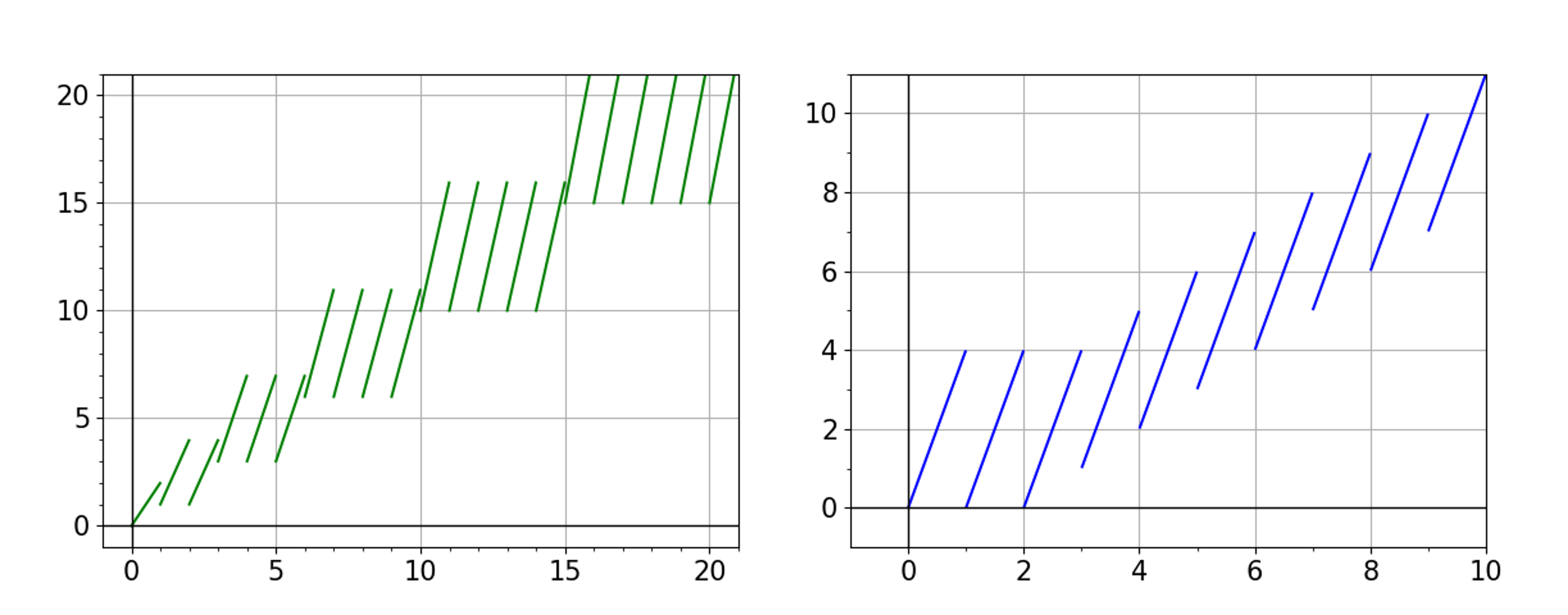}
\centering
\caption{Plots of the example systems. The green one to the left is for Example \ref{zold}, the blue one to the right is for Example \ref{kek}.}
\label{vonalak}
\end{figure}

Example \ref{kek} is a system, which has not eventually periodic rational points which do not hit integers. An example is the number $x = 0.\overline{32}$ in base-4, which is $\nicefrac{14}{15}$. Even though the orbit of this point tends to infinity, the sequence of $a_n$-s present in the Cantor series representation is eventually periodic. Namely it is $0,3,2,3,2...$. To avoid cases where the eventual periodicity of the $a_n$ and $i_n$ sequences differ we can consider systems, where $X = \mathbb{R}_{\geq 0}$ and $f|_{J_n} = kx - k(n-1)$ for some integer $k \geq 2$ and $J_n = [n-1,n]$. In this case on all integer intervals the graph of $f$ is a line with a positive slope, intersecting the $x$ axis at the beginning of the interval. This way for all $x$, we have that $a_n = i_n - 1$ for all $n \in \mathbb{N}$, where the symbolic orbit of $x$ is $(i_0, i_1, ...)$. In this case the eventual periodicity of the $a_n$ and $i_n$ sequences will coincide.

\begin{exam} \label{posrecEscapes}
We show a system $(X,f)$, for which the associated Markov chain is positive recurrent and $\nicefrac{1}{2}$ has an orbit which is not eventually periodic and does not hit integers. In addition the system will also satisfy that $a_n = i_n - 1$ for all $x \in \tilde{X}$. The domain is $\mathbb{R}_{\geq 0}$, and we call our transformation $f$. Almost all integer intervals will be mapped onto $[0,3]$, however the orbit of $\nicefrac{1}{2}$ will never return to $[0,3]$. We refer to Figure \ref{ketted}.\medskip

Let us consider the sequence $S_n$ for which $S_0 = 1, S_1= 3$ and for all $n \geq 2$ we have that $S_{n+1} = 4S_n + 1$. We define a sequence $h_n$ and we will define $f$ such that $f'(J_n) = h_n$. Let $h_{\lfloor \nicefrac{S_n}{2} \rfloor + 1} = S_{n+1}$. For all other $n$ let $h_n=3$. In this case we will have that $f^{k}(\nicefrac{1}{2}) = \nicefrac{S_{k}}{2}$. The associated Markov-chain with this system will be positive recurrent. Let $P = (p_{ij})_{i,j \in \mathbb{N}^+}$ be the transition matrix. We shall see that from any state we return to the first, second or third state in finite expected time. As the associated Markov chain is irreducible and aperiodic, this implies the positive recurrence of every state.\medskip

Let $X_0^{(n)}, X_1^{(n)}, X_2^{(n)} ... : \Omega \to \mathbb{N}^+$ be random variables realising the Markov chain given that $\mathbb{P}(X_0^{(n)} = n) = 1$. Let us define the expected time that we return to $\{1,2,3\}$ given that we started from $n$. For this end we first define the random variable $Y^{(n)}: \Omega \to \mathbb{N}^+$:
$$Y^{(n)}(\omega) = \inf \{k\in \mathbb{N}^+: X_k^{(n)}(\omega) \in \{1,2,3\}\},$$
$$E_n = \mathbb{E}(Y^{(n)}) = \mathbb{E}[k\in \mathbb{N}^+: X_k^{(n)} \in \{1,2,3\} \land X_i^{(n)} \notin \{1,2,3\}\quad  \forall i \in \{1, ..., k-1\}].$$
If $h_n = 3$, then $E_n = 1$ as $p_{n1} + p_{n2} + p_{n3} = 1$. For any $n$ we have that:
\begin{equation} \label{usedlabel}
\sum_{\substack{j \in \mathbb{N}^+ \\ h_j \neq 3}} p_{nj} \leq \frac{1}{3}.
\end{equation}
Now we can estimate $E_n$. From now on we assume that $h_n \neq 3$.
$$E_n = \sum_{k=0}^\infty \mathbb{P}(Y^{(n)} > k).$$
The event $\{\omega \in \Omega: Y^{(n)}(\omega) > k\}$ is equivalent to the event $\{\omega \in \Omega: X_1^{(n)}(\omega), ..., X_k^{(n)}(\omega) \notin \{1,2,3\}\}$. We have the following inclusion:
\begin{equation} \label{d1}
\{\omega \in \Omega: X_1^{(n)}(\omega), ..., X_k^{(n)}(\omega) \notin \{1,2,3\}\} \subseteq \{\omega \in \Omega: h_{X_0^{(n)}(\omega)}, ..., h_{X_{k-1}^{(n)}(\omega)} \neq 3\}.
\end{equation}
Let us denote the event $\{\omega \in \Omega: h_{X_0^{(n)}(\omega)}, ..., h_{X_{k-1}^{(n)}(\omega)} \neq 3\}$ by $A_k^{(n)}$. By \eqref{d1} we obtain that $\mathbb{P}(Y^{(n)} > k) \leq \mathbb{P}(A_k^{(n)})$. We deduce the following:
\begin{equation} \label{orbitd1}
\mathbb{P}(A_k^{(n)}) = \sum_{\substack{a_1, ..., a_{k-1} \in \mathbb{N}^+ \\ h_{a_1}, ..., h_{a_{k-1}} \neq 3}} p_{n a_1} p_{a_1 a_2} ... p_{a_{k-2} a_{k-1}}.
\end{equation}
By rearranging \eqref{orbitd1} we get the following: 
\begin{equation} \label{orbitd12}
\mathbb{P}(A_k^{(n)}) = \sum_{\substack{a_1 \in \mathbb{N}^+ \\ h_{a_1} \neq 3}} \Big( p_{na_1} \sum_{\substack{a_2 \in \mathbb{N}^+ \\ h_{a_2} \neq 3}} \Big( p_{a_1 a_2} ... \sum_{\substack{a_{k-1} \in \mathbb{N}^+ \\ h_{a_{k-1}} \neq 3}} p_{a_{k-2}a_{k-1}} \Big) ... \Big).
\end{equation}
By \eqref{usedlabel} and \eqref{orbitd12} we can conclude that $\mathbb{P}(A_k^{n}) \leq 3^{1-k}$. Finally we have the following upper bounds for $E_n$ if $h_n \neq 3$:
\begin{equation}
E_n = \sum_{k=0}^\infty \mathbb{P}(Y^{(n)} > k) \leq \sum_{k=0}^{\infty} \mathbb{P}(A_k^{(n)}) \leq 3 \sum_{k=0}^{\infty} 3^{-k} =  4.5 .
\end{equation}
Recall that if $h_n = 3$, then $E_n =1$. Hence for all $n$ we have that $E_n \leq 4.5$. Since this value is finite, the Markov-chain associated with the original system must be positive recurrent.
\end{exam}

\begin{figure}[h]
\includegraphics[width= 9.5cm]{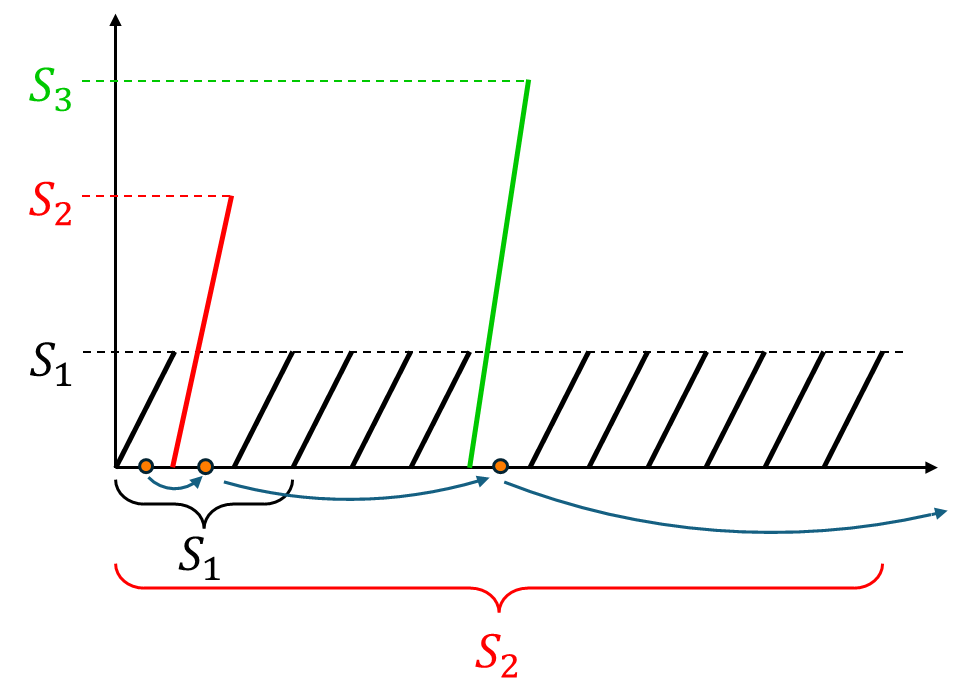}
\centering
\caption{Illustration of the systems appearing in Example \ref{posrecEscapes}, with the orbit of $\nicefrac{1}{2}$ highlighted by the orange dots. The $y$ axis is not to scale.}
\label{ketted}
\end{figure}

\begin{exam} \label{extransi}
We show an example of a system $(\mathbb{R}_{\geq 0}, f)$, such that Theorem \ref{last} applies and the chain is transient. We now give a heuristic explanation of what we plan to do. We take inspiration from Example \ref{zold}. Consider a Markov chain for which from all states the probability of stepping to a larger state is $\nicefrac{1}{n}$. We also assume that the conditional probability of stepping to a larger state given that we do not stay where we are approaches $1$ extremely quickly. Let this conditional probability for the $n$th state be $p(n)$. This will make the Markov chain transient. We will represent one state of this Markov chain by lots of states of our system. We refer to Figure  \ref{transi}. We assume that $p(n)$ is monotonically increasing.\medskip 

Let the sequence $p(n)$ be given. For example let $p(n) = e^{-n^{-2}}$ as in Lemma \ref{harang}. Let us define $P(n) = \lceil \frac{p(n)}{1-p(n)} \rceil$. Let $S_0 = 0, S_1 = 1, h_1 = 2$ and for all $n>1$ let $S_n = (n-1)P(n)S_{n-1}$ and $h_n = nP(n)S_{n-1}$. Let $L_n = [S_{n-1}, S_n]$. Let us consider an $(\mathbb{R}_{\geq 0}, f)$ system for which if $J_n \subset L_n$, then $f|_{J_n}(x) = h_n x - h_n (n-1)$. For this system Theorem \ref{last} applies, since:
$$\max_{n\in \{1-S_N, ... ,S_N\}} \lambda \Big(f^{-1} \big( (-\infty, -S_N] \cup [S_N, \infty) \big) \cap J_n \Big) = \frac{h_N - S_N}{h_n}= \frac{1}{N} \to 0.$$

\begin{figure}[h]
\includegraphics[width= 10cm]{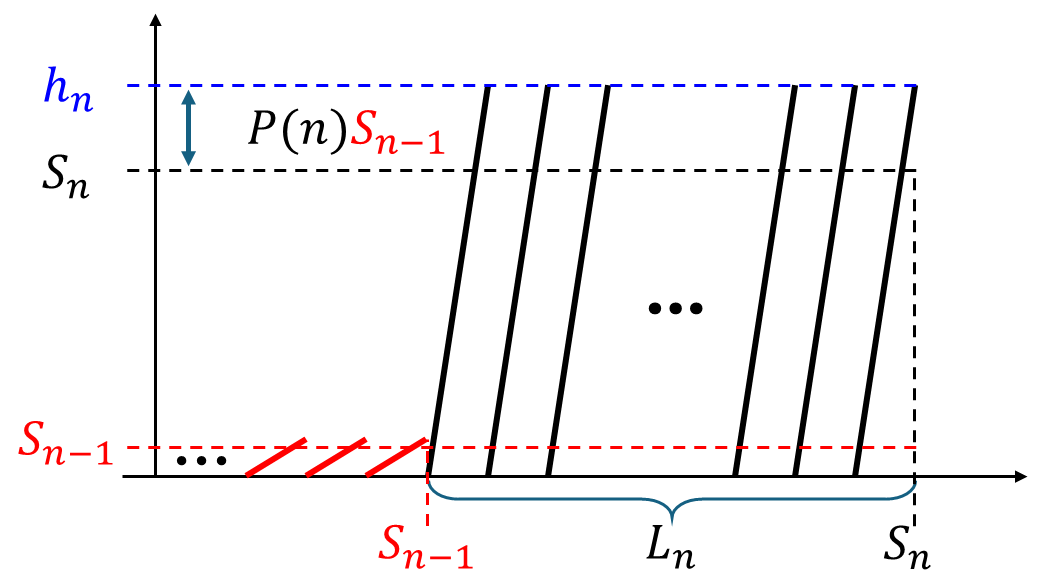}
\centering
\caption{Illustration of the systems appearing in Example \ref{extransi}.}
\label{transi}
\end{figure}

\begin{lemma}\label{harang}
If $p(n) = e^{-n^{-2}}$, then the Markov chain associated with the resulting system will only have transient states.
\end{lemma}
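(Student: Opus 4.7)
The plan is to prove transience of state $1$ by producing a positive-probability event on which the chain never returns to $L_1$; since the chain is irreducible (from any state $J_m \subset L_n$ we can reach $J_1,\dots,J_{h_n}$ in one step and the $h_n$ are unbounded), transience of state $1$ forces transience of every state.

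First I would write down the one-step law out of a state $J_m \subset L_n$ for $n \geq 2$. Because $f|_{J_m}(x) = h_n x - h_n(m-1)$ sends $J_m$ affinely onto $[0,h_n]$, this law is uniform on $\{1,\dots,h_n\}$, independent of $m$. Using $h_n - S_n = P(n)S_{n-1}$ gives probability $(h_n-S_n)/h_n = 1/n$ of jumping into the prefix of $L_{n+1}$ and probability $S_{n-1}/h_n = 1/(nP(n))$ of dropping strictly below $L_n$. Since the transition law only depends on the block index $n$, the block-level chain $B_k$ obtained from the original chain by skipping consecutive visits to the same $L_n$ is a genuine Markov chain on $\mathbb{N}$, and conditioning on actually leaving the current block one reads off
\[
\pi_n \;=\; \frac{1/n}{1/n + 1/(nP(n))} \;=\; \frac{P(n)}{P(n)+1} \qquad (n \geq 2),
\]
with $\pi_1 = 1$ because from $J_1 \subset L_1$ the only non-self transition lands in $L_2$.

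By the strong Markov property the event $E = \{B_k = k+1 \text{ for every } k \geq 0\}$ starting from $B_0 = 1$ has probability $\prod_{n \geq 1}\pi_n$, and on $E$ the chain leaves $L_1$ after finitely many steps and never returns. From $1-e^{-x} \leq x$ for $x \geq 0$ we obtain
\[
\frac{p(n)}{1-p(n)} \;\geq\; n^2 e^{-1/n^2} \;\geq\; \frac{n^2}{e},
\]
hence $P(n) \geq n^2/e$, so $\sum_{n \geq 2} 1/(P(n)+1) < \infty$ and
\[
\prod_{n \geq 1} \pi_n \;=\; \prod_{n \geq 2}\Big(1 - \tfrac{1}{P(n)+1}\Big) \;>\; 0.
\]
Thus $\mathbb{P}_1(E) > 0$, the return probability to state $1$ is strictly less than $1$, state $1$ is transient, and transience of the whole chain follows by irreducibility.

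The only mildly delicate point is justifying that the skeleton $B_k$ is Markov with the stated transitions, but this is immediate from the fact that the one-step kernel out of any state of $L_n$ is the same uniform law on $\{1,\dots,h_n\}$; after that the argument is just the convergent product estimate driven by $P(n) \asymp n^2$.
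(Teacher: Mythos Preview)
Your proof is correct and follows essentially the same route as the paper: both arguments pass to the block-level process on the $L_n$'s, observe that the conditional probability of moving from $L_n$ to $L_{n+1}$ upon exit is $\pi_n = P(n)/(P(n)+1)$, and show the infinite product $\prod_n \pi_n$ is positive so that with positive probability the chain escapes and never returns. The only cosmetic difference is that the paper bounds $\pi_n \geq p(n)$ and evaluates $\prod_n p(n) = e^{-\pi^2/6}$ directly, whereas you estimate $P(n)\gtrsim n^2$ and use summability of $1/(P(n)+1)$; your version is slightly more explicit about irreducibility and the Markov property of the skeleton chain, but the underlying argument is the same.
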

\begin{proof}
Let us consider a sequence of random variables $X_0, X_1, X_2, ... : \Omega \to \mathbb{Z}^+$ representing a random walk on the Markov chain defined by the system, where $\Omega$ is the sample space. Let $\mathcal{L}_n := \{i\in \mathbb{N}: J_i \subset L_n\}$. We have the following: 
$$\mathbb{P}[X_{k+1} \in \mathcal{L}_{n+1} | X_{k+1} \notin \mathcal{L}_{n}, X_k \in \mathcal{L}_n] = \frac{P(n)S_{n-1}}{P(n)S_{n-1} + S_{n-1}} \geq p(n).$$
We also have that $X_{j}(\omega)$ eventually leaves $\mathcal{L}_n$ with probability one. Let $n_j(\omega)$ be the integer for which $X_{j}(\omega) \in \mathcal{L}_{n_j}$. With this the probability of the sequence $n_j(\omega)$ never decreasing is at least $\prod_{k=n}^{\infty} p(n)$. If $\prod_{k=1}^{\infty} p(n) > 0$, then $X_j$ will tend to infinity almost surely. We can choose $p(n) = e^{-n^{-2}}$, thus $\prod_{k=1}^{\infty} p(n) = e^{-\frac{\pi^2}{6}} > 0$.
\end{proof}
\end{exam}

\addcontentsline{toc}{section}{Acknowledgement}
\section*{Acknowledgement}
I would like to express my deepest gratitude to my advisor Zoltán Buczolich, for proposing these intriguing problems and providing guidance throughout the preparation of this paper.

\bibliographystyle{plain}
\addcontentsline{toc}{section}{References}
\bibliography{hmm2}

\end{document}